\newdimen\bibspace
\renewenvironment{thebibliography}[1]{%
 \section*{\refname 
       \@mkboth{\MakeUppercase\refname}{\MakeUppercase\refname}}%
     \list{\@biblabel{\@arabic\c@enumiv}}%
          {\settowidth\labelwidth{\@biblabel{#1}}%
           \leftmargin\labelwidth
           \advance\leftmargin\labelsep
           \itemsep\bibspace
           \parsep\z@skip     %
           \@openbib@code
           \usecounter{enumiv}%
           \let\p@enumiv\@empty
           \renewcommand\theenumiv{\@arabic\c@enumiv}}%
     \sloppy\clubpenalty4000\widowpenalty4000%
     \sfcode`\.\@m}
    {\def\@noitemerr
      {\@latex@warning{Empty `thebibliography' environment}}%
     \endlist}
\newtheorem{thm}{Theorem}[section]
\newtheorem{lem}[thm]{Lemma}
\newtheorem{prop}[thm]{Proposition}
\newtheorem{defn}[thm]{Definition}
\newtheorem{cor}[thm]{Corollary}
\newtheorem{rem}[thm]{Remark}
\def\XXint#1#2#3{{\setbox0=\hbox{$#1{#2#3}{\int}$}
  \vcenter{\hbox{$#2#3$}}\kern-.5\wd0}}
\newcommand{\pa}{\partial}              \newcommand{\lda}{\lambda}
\newcommand{\va}{\varepsilon}           \newcommand{\ud}{\mathrm{d}}
\newcommand{\be}{\begin{equation}}      \newcommand{\ee}{\end{equation}}
\newcommand{\Lda}{\Lambda}              \newcommand{\B}{\mathcal{B}}
\newcommand{\R}{\mathbb{R}}              
\newcommand{\al}{\alpha}                 
\newcommand{\om}{\Omega}
\newcommand{\dsup}{\displaystyle\sup}
\begin{document}

\title{Compactness of solutions to nonlocal elliptic equations
 \bigskip}

\author{\medskip   Miaomiao Niu, \  \  Zhipeng Peng, \ \ Jingang Xiong\footnote{Supported in part by NSFC 11501034, a key project of NSFC 11631002, NSFC 11571019.}}

\date{\today}

\maketitle

\begin{abstract}

We show  that all nonnegative solutions of the critical semilinear elliptic equation involving the regional fractional Laplacian  are locally universally bounded. This strongly contrasts with the standard fractional Laplacian case. Second, we consider the fractional critical elliptic equations with nonnegative potentials. We prove compactness of solutions provided the potentials only have non-degenerate zeros. Corresponding to Schoen's Weyl tensor vanishing conjecture for the Yamabe equation on manifolds, we establish a Laplacian vanishing rate of the potentials at blow-up points of solutions.

\end{abstract}

\section{Introduction}

Let $\om$ be an open subset of $\R^n$, $n\ge 2$.  The regional fractional Laplace operator is defined as
\[
(-\Delta_\om)^\sigma u(x):=\mbox{P.V.} c_{n,\sigma}\int_{\om}\frac{u(x)-u(y)}{|x-y|^{n+2\sigma}}\,\ud y \quad \mbox{for }u\in C^2(\om),
\]
where $0<\sigma<1$ is a parameter, $c_{n,\sigma}=\frac{2^{2\sigma}\sigma\Gamma(\frac{n+2\sigma}{2})}{\pi^{\frac{n}{2}} \Gamma(1-\sigma)}$. The regional fractional Laplacian arises, for instance, from  the Feller generator of the reflected symmetric
stable process, see Bogdan-Burdzy-Chen \cite{BBC}, Chen-Kumagai  \cite{CK}, Guan-Ma \cite{GuM}, Guan \cite{Gu}, Mou-Yi \cite{MY} and many others. Here we are interested in universal boundness of positive solutions to nonlinear Poisson equation involving the regional fractional Laplacian. Making use of the standard blow-up argument of Gidas-Spruck \cite{GS} and the Liouville theorem, one can show that any nonnegative solutions of the equation $(-\Delta_\om)^\sigma u(x)=u^{p}$ with $1<p<\frac{n+2\sigma}{n-2\sigma}$ are locally universally bounded. In view of the fractional Sobolev inequality, for $p$ in that range we say the equation is subcritical. In contrast, the critical equation $p=\frac{n+2\sigma}{n-2\sigma}$ has blow-up solutions when $\om=\R^n$. See Jin-Li-Xiong \cite{JLX,JLX1} and references therein for more discussions.

However, if $\om$ has nontrivial complement, we have

\begin{thm}\label{thm:main} Suppose that $\om$ is an open subset of $\R^n$ and the measure of $\R^n\setminus \om$ is non-zero. Without loss of generality, suppose that the unit ball $B_1\subset \om$. Let $u\in C^2 (\om)$ be a nonnegative solution of
\be\label{eq:main}
(-\Delta_\om)^\sigma u= u^{\frac{n+2\sigma}{n-2\sigma}} \quad \mbox{in }B_1.
\ee
If $n\ge 4\sigma$, then
\[
\|u\|_{C^2(B_{1/2})}\le C(n, \sigma,\om),
\]
where $C(n, \sigma,\om)>0$ is a constant depending only $n,\sigma, \om $.
\end{thm}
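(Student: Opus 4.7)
The plan is to argue by contradiction via a blow-up analysis, and then to exploit the regional structure through a Pohozaev-type integral identity that a standard $\R^n$-bubble cannot satisfy. Suppose the conclusion fails: there exist nonnegative $u_k\in C^2(\om)$ solving \eqref{eq:main} with $\|u_k\|_{L^\infty(B_{1/2})}\to\infty$. A doubling argument in the spirit of Polacik--Quittner--Souplet, adapted to the nonlocal setting, produces blow-up points $x_k\to x_\infty\in\overline{B_{1/2}}$ and scales $\mu_k:=u_k(x_k)^{-2/(n-2\sigma)}\to 0$ such that
\[
v_k(y):=\mu_k^{(n-2\sigma)/2}\,u_k(x_k+\mu_k y)
\]
satisfies $v_k(0)=1$, $v_k\le 2$ on balls of radius $R_k\to\infty$, and solves $(-\Delta_{\om_k})^\sigma v_k=v_k^{(n+2\sigma)/(n-2\sigma)}$ on $(B_1-x_k)/\mu_k$, where $\om_k:=(\om-x_k)/\mu_k$. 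Since $B_1\subset\om$, the sets $\om_k$ exhaust $\R^n$; fractional interior regularity together with a uniform tail estimate for $v_k$ yield $v_k\to v_\infty$ in $C^2_{\mathrm{loc}}(\R^n)$, with $v_\infty$ a positive solution of $(-\Delta)^\sigma v=v^{(n+2\sigma)/(n-2\sigma)}$ on $\R^n$. By the Chen--Li--Ou classification, $v_\infty$ is a standard bubble, with decay $v_\infty(y)\sim c|y|^{-(n-2\sigma)}$ at infinity.

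Next I bring in the regional structure through the operator identity
\[
(-\Delta)^\sigma u^*(x)=(-\Delta_\om)^\sigma u(x)+V(x)u(x),\qquad x\in\om,
\]
where $u^*:=u\,\mathbf{1}_\om$ is the extension by zero and the \emph{regional potential}
\[
V(x):=c_{n,\sigma}\int_{\R^n\setminus\om}|x-y|^{-n-2\sigma}\,\ud y
\]
is smooth and bounded on compact subsets of $\om$ and strictly positive there because $|\R^n\setminus\om|>0$. Hence \eqref{eq:main} is equivalent to
\[
(-\Delta)^\sigma u^*=u^{(n+2\sigma)/(n-2\sigma)}+V(x)u \qquad \text{in } B_1,
\]
together with the nonlocal ``Dirichlet'' condition $u^*\equiv 0$ on $\R^n\setminus\om$. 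The linear term $Vu$ is invisible in the pointwise blow-up limit (it rescales by $\mu_k^{2\sigma}$ and vanishes at each fixed $y$), but survives after integration, and this is the term I intend to exploit to forbid the formation of the bubble.

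To close the argument I would apply a localized Pohozaev-type integral identity for $(-\Delta)^\sigma u^*$ on a fixed small ball $B_\rho(x_\infty)\subset B_1$, in the spirit of Ros-Oton--Serra, using cutoffs to absorb the restricted validity of the equation and the jump of $u^*$ across $\partial\om$. Inserting the blow-up ansatz, the critical nonlinear term is scale-invariant and converges to the finite bubble energy, boundary/localization remainders tend to zero because $u_k\to 0$ uniformly on compacta of $B_1\setminus\{x_\infty\}$, and the regional perturbation contributes
\[
\int_{B_\rho(x_\infty)} V(x)\,u_k^{2}\,\ud x=\mu_k^{2\sigma}\bigl(V(x_\infty)+o(1)\bigr)\int_{\R^n} v_\infty^{2}\,\ud y+o(\mu_k^{2\sigma}).
\]
The hypothesis $n\ge 4\sigma$ enters precisely here: it is equivalent to the integrability $v_\infty\in L^2(\R^n)$ (since $v_\infty\sim|y|^{-(n-2\sigma)}$ at infinity, which is in $L^2$ outside a ball exactly in this range), ensuring the $V$-contribution has a strictly positive limit at the correct scale and cannot be cancelled by the vanishing boundary terms. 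The hardest step, and the core of the argument, will be the design of the Pohozaev identity so that this definite-sign $V$-term sits at the \emph{leading} order after renormalization by $\mu_k^{2\sigma}$: one must simultaneously control the nonlocal character of $(-\Delta)^\sigma u^*$, the discontinuity of $u^*$ across $\partial\om$, and the fact that the equation holds only inside $B_1$ rather than on all of $\om$.
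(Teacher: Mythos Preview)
Your reduction is exactly the paper's: write $(-\Delta_\om)^\sigma u=(-\Delta)^\sigma u^*-A_\om(x)u$ with $A_\om>0$, so that \eqref{eq:main} becomes $(-\Delta)^\sigma u^*=A_\om u+u^{(n+2\sigma)/(n-2\sigma)}$ in $B_1$, and then argue by blow-up and a Pohozaev identity that the strictly positive potential term rules out bubbling when $n\ge 4\sigma$. The paper records Theorem~\ref{thm:main} as an immediate corollary of Theorem~\ref{thm:2} (case~(i), $a=A_\om>0$). The overall strategy is therefore the same; the issue is the execution of the Pohozaev step.

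The genuine gap is your claim that ``$u_k\to 0$ uniformly on compacta of $B_1\setminus\{x_\infty\}$'' and that the boundary/localization remainders are of lower order than the $V$-term. A single doubling step gives you control of $v_k$ only on the rescaled ball $B_{R_k}$, i.e.\ on $B_{R_k\mu_k}(x_k)$ in the original variables; it says nothing about $u_k$ at a fixed positive distance from $x_k$, and it does not rule out accumulation of many bubbles near $x_\infty$. To make the Pohozaev balance work you need the \emph{sharp} decay $u_k(y)\le Cu_k(x_k)^{-1}|y-x_k|^{2\sigma-n}$ on a fixed ball, so that after multiplying the identity by $u_k(x_k)^2$ the boundary terms stay bounded while the $V$-term diverges. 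The paper spends Sections~4--6 on precisely this: first a bubble decomposition and uniform separation of blow-up points (Propositions~\ref{prop:bubledec} and~\ref{prop:energy}), then the passage from isolated to isolated \emph{simple} blow-up (Proposition~\ref{prop:is-iss}), and only then the sharp upper bound (Proposition~\ref{prop:up-1}) that feeds into the Pohozaev estimate (Proposition~\ref{prop:sign}). None of this follows from doubling alone.

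Two smaller points. Your statement that $n\ge 4\sigma$ is equivalent to $v_\infty\in L^2(\R^n)$ is inaccurate at the borderline: since $v_\infty^2\sim|y|^{-2(n-2\sigma)}$, one has $v_\infty\in L^2$ only for $n>4\sigma$; at $n=4\sigma$ the integral $\int v_\infty^2$ diverges logarithmically, which is still exploitable but needs separate bookkeeping (cf.\ the $\ln m_i$ factors in Lemma~\ref{lem:a_i} and Proposition~\ref{prop:sign}). Also, running a Ros-Oton--Serra Pohozaev identity directly on $u^*$ is awkward because $u^*$ jumps across $\pa\om$ and the equation holds only in $B_1$; the paper sidesteps this entirely by passing to the Caffarelli--Silvestre extension and performing the Pohozaev computation in $\R^{n+1}_+$ (Proposition~\ref{prop:pohozaev}), where no such discontinuities arise.
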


Theorem \ref{thm:main} is of nonlocal nature and fails when $\sigma=1$. Since no condition is assumed on solutions in the complement of $B_1$, there exist infinitely many solutions of \eqref{eq:main}. Note that \eqref{eq:main} is the Euler-Lagrange equation of the fractional Sobolev inequality in $\om$. Recently,  Frank, Jin and Xiong \cite{FJX} showed that the best constants of fractional Sobolev inequality depend on domains and can be achieved in many cases, which is different from the classical Sobolev inequalities in domains.

For every smooth bounded function  $u$ defined in $\om$, by extending $u$ to zero outside $\om$ we see that
\be \label{eq:intro-2}
(-\Delta_\om)^\sigma u(x)=(-\Delta)^\sigma u(x)-A_\om(x) u(x) \quad \mbox{for }x\in \om,
\ee  where $(-\Delta)^\sigma:=(-\Delta_{\R^n})^\sigma$ is the standard fractional Laplacian,
\be
A_\om(x):= c(n,\sigma)\int_{\R^n\setminus \om} \frac{1}{|x-y|^{n+2\sigma}}\,\ud y.
\ee
Since the measure of $\R^n\setminus \om$ is positive, $A_\om>0$. Generally, let us consider the equation
\be \label{eq:main1}
(-\Delta)^\sigma u- a(x) u=u^{\frac{n+2\sigma}{n-2\sigma}} \quad \mbox{in }B_3, \quad u\ge 0\quad  \mbox{in }\R^n,
\ee
where the potential $a(x)$ is assumed to be nonnegative and smooth.

Second order critical semilinear elliptic equations of \eqref{eq:main1} type have been studied very extensively. A typical example is the Yamabe equation on Riemannian manifolds whose potential is the scalar curvature multiplied by a constant. Compactness and blow-up phenomenon of solutions to the Yamabe equation have been well understood; see, e.g., the recent book Hebey \cite{Hebey} and references therein. Note that the Laplacian of scalar curvature at the center of conformal normal coordinates equals $-\frac16 |W_g|^2$, where $W_g$ is the Weyl tensor of the metric $g$.  A conjecture due to Schoen says if there exists a sequence of local solutions to the Yamabe equation that blow up at $x_i\to \bar x$ then the Weyl tensor will vanish at $\bar x$ up to $[\frac{n-6}{2}]$-th order derivatives, where $n$ is the dimension of manifolds. If $6\le n\le 24$, the conjecture was proved positively by Li-Zhang \cite{Li-Zhang05,Li-Zhang06}, Marques \cite{Marques} and Khuri-Marques-Schoen \cite{KMS}. If $n\ge 25$, a counterexample was obtained by Marques \cite{Marques-2}. Consequently, solutions set of the Yamabe equation is compact in $C^2$ if the Weyl tensor or some derivatives of order $\le [\frac{n-6}{2}]$ does not vanish everywhere in dimension less than $24$. If the Weyl tensor does not vanish everywhere, compactness was proved in all dimensions $n\ge 6$ by \cite{Li-Zhang05,Marques}. Similar phenomenon has been proved recently by Li-Xiong \cite{Li-Xiong} for the fourth order $Q$-curvature equation in dimension $n\ge 8$. Another purpose of paper is to establish an analogue for Yamabe type equations with non-geometric potentials.

Let us introduce the space
\[
\mathcal{L}_\sigma(\R^n)=\{u\in L^1_{loc}(\R^n): \int_{\R^n}\frac{|u(x)|}{(1+|x|)^{n+2\sigma}}\,\ud x<\infty\}.
\]
Even though the two theorems below are stated in the nonlocal setting, they can be extended to $\sigma=1$.

\begin{thm}\label{thm:2} Let $ u\in C^2(B_3)\cap \mathcal{L}_\sigma(\R^n)$ be  a solution of \eqref{eq:main1} with $a\ge 0$ and $n\ge 4\sigma$. If either
\begin{itemize}
\item[(i)] $a>0$ in $B_2$, or
\item[(ii)] $\Delta a>0 $  on $\{x:a(x)=0\}\cap B_2$ and $n\ge 4\sigma+2$
\end{itemize} holds, then
\[
\|u\|_{C^2(B_{1})}\le C,
\]
where $C>0$ depends only on $n, \sigma, \|a\|_{C^4(B_3)}$ and $\inf_{B_2} a$  if (i) holds, otherwise it depends only on $n, \sigma, \|a\|_{C^4(B_3)}$ and $\inf_{\{x:a(x)=0\}\cap B_2}\Delta a$.

\end{thm}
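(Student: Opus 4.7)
The plan is to argue by contradiction, following the blow-up scheme developed for the Yamabe equation and adapted to the fractional setting by Jin-Li-Xiong. Suppose the conclusion fails: there is a sequence of solutions $u_i$ to \eqref{eq:main1} with $\|u_i\|_{C^2(B_1)} \to \infty$. A standard selection procedure produces local maxima $x_i \to \bar x \in \overline{B_1}$ with $M_i := u_i(x_i) \to \infty$. Rescaling $v_i(y) = M_i^{-1} u_i(x_i + \epsilon_i y)$ with $\epsilon_i = M_i^{-2/(n-2\sigma)}$, so that $\epsilon_i^{2\sigma} = M_i^{-(p-1)}$ where $p = (n+2\sigma)/(n-2\sigma)$, the $v_i$ satisfy
\[
(-\Delta)^\sigma v_i = v_i^p + \epsilon_i^{2\sigma}\, a(x_i + \epsilon_i y)\, v_i,
\]
and the potential term vanishes in the limit. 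By the Liouville classification of Chen-Li-Ou and Jin-Li-Xiong, $v_i$ converges in $C^2_{\mathrm{loc}}$ to a standard bubble $U$ on $\mathbb{R}^n$. I would then verify, using the Caffarelli-Silvestre extension and Harnack-type estimates for the extended operator, the isolated-simple-blow-up structure of Schoen-Zhang / Li-Zhang in this nonlocal setting, obtaining a sharp two-sided bound
\[
C^{-1}\, M_i^{-1}\bigl(1 + \epsilon_i^{-2}|x - x_i|^2\bigr)^{-(n-2\sigma)/2} \;\le\; u_i(x) \;\le\; C\, M_i^{-1}\bigl(1 + \epsilon_i^{-2}|x - x_i|^2\bigr)^{-(n-2\sigma)/2}
\]
on a fixed neighborhood of $x_i$, with the global $\mathcal{L}_\sigma$-tail controlled uniformly.

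The second step is a fractional Pohozaev identity. Let $U_i$ be the Caffarelli-Silvestre extension of $u_i$ to $\mathbb{R}^{n+1}_+$. Multiplying the extension equation by $(x - x_i)\cdot \nabla_x U_i + t\,\partial_t U_i + \tfrac{n-2\sigma}{2} U_i$ and integrating over a half-ball of radius $r \in (\epsilon_i, r_0)$, the critical-exponent cancellation eliminates the $u_i^{p+1}$ contribution, and an integration by parts in the $a$-term yields an identity of the form
\[
\sigma \int_{B_r} a(x)\, u_i^2\,dx \;+\; \tfrac{1}{2} \int_{B_r} (x - x_i) \cdot \nabla a(x)\, u_i^2\,dx \;=\; \mathcal{B}_r(U_i),
\]
where $\mathcal{B}_r(U_i)$ collects boundary contributions on the curved part of the half-ball together with lower-order traces at $\{t=0,\,|x-x_i|=r\}$. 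Taylor-expanding $a$ about $\bar x$ and using the two-sided bound, each moment satisfies
\[
\int_{B_r} |x - x_i|^k u_i^2\,dx \;\asymp\; \epsilon_i^{2\sigma + k} \int_{\mathbb{R}^n} |y|^k U(y)^2\,dy,
\]
with the right integral finite exactly when $n > 4\sigma + k$; odd-$k$ contributions vanish by the radial symmetry of $U$. A direct estimate using $u_i(x) \asymp M_i^{-1}|x - x_i|^{-(n-2\sigma)}$ on $\partial B_r$ shows that $\mathcal{B}_r(U_i) = O(M_i^{-2}) = O(\epsilon_i^{n-2\sigma})$, which is strictly smaller than $\epsilon_i^{2\sigma}$ whenever $n > 4\sigma$.

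Balancing orders in $\epsilon_i$ gives the contradiction. In case (i), the leading balance reads $\sigma\, a(\bar x) \int_{\mathbb{R}^n} U^2 = 0$, forcing $a(\bar x) = 0$ and contradicting $a > 0$ in $B_2$. In case (ii), if $a(\bar x) > 0$ the same argument applies; otherwise $a(\bar x) = 0$, the leading term drops out, and the next-order balance becomes
\[
c(n,\sigma)\, \Delta a(\bar x) \int_{\mathbb{R}^n} |y|^2 U(y)^2\,dy \;=\; 0
\]
for an explicit positive constant $c(n,\sigma)$, forcing $\Delta a(\bar x) \le 0$ and contradicting the hypothesis on $\{a=0\}\cap B_2$; here the condition $n \ge 4\sigma + 2$ is precisely what guarantees $\int |y|^2 U^2 < \infty$. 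The principal obstacle is the first step: proving the isolated-simple-blow-up structure together with the sharp two-sided bound above. The nonlocality of $(-\Delta)^\sigma$ forces one to work in the extension and to track the global $\mathcal{L}_\sigma$-tail, while the nonnegative potential $a\,u$ enters as a boundary Robin-type perturbation whose sign facilitates the Harnack estimates. Once that structure is in place, Steps 2 and 3 reduce to a careful but essentially algebraic expansion in $\epsilon_i$, and the dimension thresholds $n \ge 4\sigma$ and $n \ge 4\sigma + 2$ emerge as the integrability thresholds for $U^2$ and $|y|^2 U^2$ at infinity.
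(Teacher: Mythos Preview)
Your overall strategy matches the paper's: contradiction, blow-up analysis in the Caffarelli--Silvestre extension, and a Pohozaev balancing. The balancing you describe in Steps~2--3 is correct and agrees with Proposition~\ref{prop:sign}; the thresholds $n\ge 4\sigma$ and $n\ge 4\sigma+2$ do arise exactly as integrability conditions for $\int U^2$ and $\int |y|^2 U^2$.

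The gap is in Step~1. You present the isolated-simple structure and the sharp two-sided bound as something to be ``verified using Harnack-type estimates,'' with the sign of $a$ merely ``facilitating the Harnack estimates.'' That is not how it works here: neither \emph{isolated} nor \emph{isolated simple} follows from Harnack alone; both already require the Pohozaev sign computation you postpone to Step~2. The paper uses that sign estimate twice before the final contradiction:
\begin{itemize}
\item[(a)] \emph{Bubble separation} (Proposition~\ref{prop:energy}). A ``standard selection'' at a maximum does not give $u_i(y)\le C|y-y_i|^{-2\sigma/(p-1)}$ on a \emph{fixed} ball; one must rule out accumulating bubbles. This is done via a combinatorial selection (Lemma~\ref{lem:select}) and a rescaled Pohozaev argument against a two-pole Green's limit.
\item[(b)] \emph{Isolated $\Rightarrow$ isolated simple} (Proposition~\ref{prop:is-iss}). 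If simplicity fails, one rescales by the second critical radius $\mu_i\to 0$ and obtains a limit $A|Y|^{2\sigma-n}+H$ with $A=H>0$; then $P_\sigma(0,\delta,G)<0$, while Proposition~\ref{prop:sign} forces $\liminf \Phi_i(0)^2 P_\sigma\ge 0$, a contradiction.
\end{itemize}
In both (a) and (b) it is precisely the hypotheses $a>0$, or $\Delta a>0$ on $\{a=0\}$ with $n\ge 4\sigma+2$, that make the Pohozaev quantity nonnegative after rescaling. Thus the sign conditions on the potential are used throughout the blow-up analysis, not only at the end, and the sharp upper bound $u_i\le C m_i^{-1}|y-y_i|^{2\sigma-n}$ (Proposition~\ref{prop:up-1}) becomes available only \emph{after} isolated simplicity is in hand. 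Once you reorganize Step~1 to incorporate these two preliminary Pohozaev arguments, your plan coincides with the paper's proof.
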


In view of \eqref{eq:intro-2}, Theorem \ref{thm:main} follows from Theorem \ref{thm:2}. We believe there are blow-up examples if $n<4\sigma$ and $a>0$.   Compactness of finite energy changing-signs solutions of Brezis-Nirenberg problem was established in dimensions $n>6\sigma$ by Devillanova-Solimini \cite{DS} for $\sigma=1$ and Yan-Yang-Yu \cite{YYY} for $0<\sigma<1$, where $a$ is a positive constant.
Corresponding to \cite{Li-Zhang05,Marques}, we have:

\begin{thm}\label{thm:3} Let $ u_i\in C^2(B_3)\cap \mathcal{L}_\sigma(\R^n)$, $i=1,2,\dots$, be a solution of
\be\label{eq:main2}
(-\Delta)^\sigma u_i- a_i(x) u_i=u_i^{\frac{n+2\sigma}{n-2\sigma}} \quad \mbox{in }B_3, \quad u_i\ge 0\quad  \mbox{in }\R^n,
\ee
where $a_i\ge 0$, $\|a_i\|_{C^4(B_3)}\le A_0$ for some $A_0>0$ and $a_i\to a$ in $C^4(B_3)$. Suppose that $\Delta a_i\ge 0 $  in $\{x:a_i(x)< \va\}\cap B_2$ for some $\va>0$ independent of $i$ and $n\ge 4\sigma+2$. If  $x_i\to \bar x\in B_1$ and $u_i(x_i)\to \infty$ as $i\to \infty$, then $a(\bar x)=\Delta a(\bar x)=0$. Furthermore,
\begin{itemize}
\item[(i)] If $4\sigma+2\le n<6\sigma+2$, we can find $x'_i\to \bar x$ such that
\[
a_i(x_i') u(x_i')^{\frac{4}{n-2\sigma}}\ln u_i(x_i')+ \Delta a_i(x_i') \le C  (\ln u_i(x_i'))^{-1}
\]
for $n=4\sigma+2$ and
\[
a_i(x_i') u(x_i')^{\frac{4}{n-2\sigma}}+ \Delta a_i(x_i') \le C  u(x_i')^{\frac{2(4\sigma+2-n)}{n-2\sigma}}
\]
for $4\sigma+2< n<6\sigma+2$, where $C>0$ depends only on $n,\sigma$, $\va$ and $A_0$.

\item[(ii)] If $n\ge 6\sigma+2$, assume that
\be \label{eq:main-assum}
x_i \mbox{ is a local maimum point of }u_i, \quad  \max_{B_{\bar d}(x_i)}u_i(x)\le \bar b u_i(x_i)
\ee
for some positive constants $\bar b$ and $\bar d $. Then
\[
a_i(x_i) u(x_i)^{\frac{4}{n-2\sigma}}+ \Delta a_i(x_i) \le C \begin{cases} u_i(x_i)^{\frac{-4\sigma}{n-2\sigma}} \ln u_i(x_i)& \quad \mbox{for }n=6\sigma+2,\\
u_i(x_i)^{\frac{-4\sigma}{n-2\sigma}} & \quad \mbox{for }n>6\sigma+2,
\end{cases}
\]
 where $C>0$ depends only on $n,\sigma$,  $\va$, $A_0$, as well as constants $\bar b$ and $\bar d$.
\end{itemize}

\end{thm}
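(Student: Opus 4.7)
The plan is to follow the Schoen--Li-Zhang--Marques strategy for the critical Yamabe equation, adapted to the nonlocal setting via the Caffarelli--Silvestre extension, in three stages: first I would show $a(\bar x)=\Delta a(\bar x)=0$ by contradiction against Theorem \ref{thm:2}; then I would perform a refined blow-up analysis around $\bar x$; finally I would extract the quantitative rates from a Pohozaev-type identity applied to the extension.

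For the vanishing step, if $a(\bar x)>0$ then $a_i\ge\tfrac12 a(\bar x)>0$ on some ball $B_\rho(\bar x)$ for all large $i$, and Theorem \ref{thm:2}(i) (after translation and scaling) produces a uniform bound of $u_i$ on $B_{\rho/2}(\bar x)$, contradicting $u_i(x_i)\to\infty$. Given $a(\bar x)=0$, suppose $\Delta a(\bar x)>0$; by continuity and the hypothesis $\Delta a_i\ge 0$ on $\{a_i<\va\}$, one has $\Delta a_i\ge\tfrac12\Delta a(\bar x)>0$ on $\{a_i=0\}\cap B_\rho(\bar x)$ for small $\rho$, and Theorem \ref{thm:2}(ii) again bounds $u_i$ uniformly, a contradiction. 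Note that $\nabla a(\bar x)=0$ automatically, since $a\ge 0$ attains its minimum at $\bar x$.

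Adapting the nonlocal blow-up framework of Jin--Li--Xiong, I would next establish that $x_i$ (in case (ii)) or a suitably selected nearby point $x_i'\to\bar x$ (in case (i)) is an isolated simple blow-up point. Setting $\lambda_i:=u_i(x_i)^{2/(n-2\sigma)}$, the rescaled sequence $v_i(y):=\lambda_i^{-(n-2\sigma)/2}u_i(x_i+\lambda_i^{-1}y)$ converges in $C^2_{\mathrm{loc}}$ to the standard bubble $U_{0,1}$, together with the sharp pointwise control
\[
u_i(y)\le C\,u_i(x_i)\bigl(1+\lambda_i|y-x_i|\bigr)^{-(n-2\sigma)}\qquad\mbox{on a fixed ball around }x_i.
\]
In case (ii) this follows directly from \eqref{eq:main-assum}; in case (i), a Struwe--Schoen type selection argument produces the point $x_i'$ along which the above decay is valid even though $x_i'$ need not itself be a local maximum of $u_i$.

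Finally, let $U_i$ denote the Caffarelli--Silvestre extension of $u_i$ to $\R^{n+1}_+$. Testing the extension equation on a half-ball $B_r^+(x_i)$ against $(y-x_i)\cdot\nabla U_i+\tfrac{n-2\sigma}{2}U_i$ and using criticality to eliminate the $u^p$ contribution yields a Pohozaev identity of the form
\[
\mathcal{P}(r,x_i;U_i)=\int_{B_r(x_i)}\Bigl[\sigma\,a_i u_i^2+\tfrac{1}{2}(y-x_i)\cdot\nabla a_i\,u_i^2\Bigr]\,dy,
\]
where $\mathcal{P}$ is an explicit boundary functional on the half-sphere. Taylor expanding $a_i$ at $x_i$ with $a_i(x_i),\nabla a_i(x_i)=o(1)$, and exploiting radial symmetry of the bubble, the right hand side reduces modulo errors to $\sigma a_i(x_i)\int U_{\lambda_i}^2+\frac{1}{2n}\Delta a_i(x_i)\int|y-x_i|^2 U_{\lambda_i}^2$. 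These integrals have size $\lambda_i^{-2\sigma}$ (for $n>4\sigma$) and $\lambda_i^{-2-2\sigma}$ (for $n>4\sigma+2$), with a $\ln\lambda_i$ borderline at $n=4\sigma+2$; multiplying through by $\lambda_i^{2\sigma+2}=u_i(x_i)^{(4\sigma+4)/(n-2\sigma)}$ and estimating $\mathcal{P}$ from the bubble-plus-error expansion produces exactly the combinations and rates claimed in the theorem. The principal technical obstacle will be the careful control of the error terms in this identity --- both the nonlocal tail in $\mathcal{P}$ and the cross contribution from $a_i u_i$ against the multiplier. This is what forces the dimensional split: for $4\sigma+2\le n<6\sigma+2$ these errors match the leading $\Delta a$ contribution in size and the estimate can only be closed at a nearby point $x_i'$ where favorable cancellations arise, whereas for $n\ge 6\sigma+2$ under the isolated-simple hypothesis \eqref{eq:main-assum} they are genuinely lower order and the bound holds at $x_i$ itself.
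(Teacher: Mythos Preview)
Your outline captures the overall Schoen--Li--Zhang--Marques architecture correctly, and the vanishing step via Theorem \ref{thm:2} is fine (the paper actually deduces $a(\bar x)=\Delta a(\bar x)=0$ \emph{a posteriori} from the quantitative estimates, but your route is equally valid). However, there is a genuine structural gap in your treatment of case (ii), and a related misunderstanding of why $x_i'$ appears in case (i).

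For $n\ge 6\sigma+2$ you assert that the sharp decay $u_i(y)\le Cu_i(x_i)(1+\lambda_i|y-x_i|)^{-(n-2\sigma)}$ ``follows directly from \eqref{eq:main-assum}''. It does not. That decay is equivalent to $x_i$ being an \emph{isolated simple} blow-up point (Proposition \ref{prop:up-1}), and in the range $n\ge 6\sigma+2$ the passage from isolated to isolated simple (Proposition \ref{prop:is-iss}) requires the quantitative hypothesis
\[
\sigma a_i(x_i)m_i^{\frac{2(n-4\sigma)}{n-2\sigma}}+\tfrac{\sigma+1}{2n}\Delta a_i(x_i)m_i^{\frac{2(n-4\sigma-2)}{n-2\sigma}}\;\gtrsim\;\|a_i\|_{B_1}\,m_i^{\frac{2(n-6\sigma-2)}{n-2\sigma}}
\]
(or the logarithmic analogue when $n=6\sigma+2$). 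This lower bound is \emph{exactly} the negation of the estimate you are trying to prove. The paper therefore argues by contradiction: assume the desired bound fails; use \eqref{eq:main-assum} only to show that $x_i$ actually lies in the selection set $S_i$ (via a rescaling argument ruling out two nearby critical points); the contradiction hypothesis then supplies the $\beta_i$-condition in Proposition \ref{prop:is-iss}, yielding isolated simple blow-up; and finally Proposition \ref{prop:sign} together with $|m_i^2P_\sigma|\le C$ gives the contradiction. Without this loop you cannot close the argument in high dimensions.

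Correspondingly, your explanation of the dimensional split is inverted. In the range $4\sigma+2\le n<6\sigma+2$ the Pohozaev error terms are genuinely \emph{lower} order than the $\Delta a_i$ contribution (see Proposition \ref{prop:sign}: no $\|a_i\|_{B_1}$ term appears there), so once isolated simple blow-up is secured the estimate follows directly --- no ``favorable cancellations'' at $x_i'$ are needed. The point $x_i'\in S_i$ arises only because the hypothesis places no structure on $x_i$ itself; the bubble decomposition (Proposition \ref{prop:bubledec}) and the separation estimate (Proposition \ref{prop:energy}, which uses Lemma \ref{lem:select}) furnish a nearby $x_i'$ that is automatically isolated, and then isolated simple since $\beta_i\ge 0$ is free from $a_i\ge 0$, $\Delta a_i\ge 0$ near the zero set. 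It is precisely for $n\ge 6\sigma+2$ that the error may compete with the main term, forcing the contradiction scheme and the extra assumption \eqref{eq:main-assum}. (A minor point: the coefficient of the $\Delta a_i$ term in your Pohozaev expansion should be $\tfrac{1+\sigma}{2n}$, not $\tfrac{1}{2n}$; the multiplier $\tfrac12 y\cdot\nabla a_i$ contributes an extra $\tfrac{\sigma}{2}$ after integration by parts.)
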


It is interesting to point out that if $\sigma\ge 1$, the constant  $6\sigma+2$ would be replaced by $4\sigma+4$ and $u_i(x_i')^{\frac{4\sigma}{n-2\sigma}} $ by $u_i(x_i')^{\frac{4}{n-2\sigma}} $, see the proof Proposition \ref{prop:sign}. The borderlines of dimensions in the above theorem might be not applicable to the compactness problem of the fractional Yamabe equations on the conformal boundaries of Einstein-Poincar\'e manifolds, as the second order operators have a non-trivial zero-order term. Fractional conformal invariant operators and fractional Yamabe problem have been studied by Graham-Zworski \cite{GZ}, Chang-Gonz\'alez \cite{CG}, Case-Chang \cite{CC}, Gonz\'alez-Qing \cite{GQ}, Fang-Gonz\'alez \cite{FG} and Kim-Musso-Wei \cite{KMW2} recently. Non-compactness examples of the fractional Yamabe equations were obtained by Kim-Musso-Wei \cite{KMW1} in higher dimensions as Brendle \cite{B} and Brendle-Marques \cite{BM} did for the Yamabe equation.  The $1/2$-Yamabe problem coincides with the boundary Yamabe problem initiated by Escobar \cite{E}.  The compactness problem of $1/2$-Yamabe equation has been studied by Felli-Ould Ahmedou \cite{FO} and Almaraz \cite{de,de1}.

The proofs of main theorems rely on asymptotic analysis of blowing up solutions. First, we should understand the possible bubbles interaction caused by the non-locality. By now two methods have been developed:
\begin{itemize}
\item[1.] Using the extension formula of Caffarelli-Silvestre \cite{CaS}, see Jin-Li-Xiong \cite{JLX};
\item[2.] Using Green's representation, see Jin-Li-Xiong \cite{JLX1} and Li-Xiong \cite{Li-Xiong}.
\end{itemize}
We will use the first one in the paper. Except the interest of degenerate elliptic equations, it appears easier to be adapted to study fractional Yamabe equations mentioned above. In addition, our proofs of Theorem \ref{thm:2} and Theorem \ref{thm:3} imply that both of them are still true when $(-\Delta)^\sigma$ is replaced by the spectral fractional Laplace operator. See Cabr\'e-Tan \cite{CT}, Capella-D\'avila-Dupaigne-Sire \cite{CDDS}, Yan-Yang-Yu \cite{YYY} and many others for study of nonlinear problems involving spectral fractional Laplace operator.  The second method has prominent advantage in dealing with higher order elliptic equations.

By using Caffarelli-Silvestre extension, the blow up analysis procedure will need a B\^ocher type theorem for degenerate elliptic equations with isolated singularities. Existence of Green function of this type degenerate elliptic equations on manifolds were obtained by Jin-Xiong \cite{JX} and Kim-Musso-Wei \cite{KMW2} via a duality argument but asymptotic expansion seems unknown. A difficulty is the lack of weighted $W^{1,p}$ estimates.
In section \ref{sect:3}, we establish existence and asymptotic expansion of Green functions via \textit{parametrix method} with the help of half-space Riesz potentials. Our approach also works for degenerate elliptic equations on manifolds.

The proofs of Theorem \ref{thm:2} and \ref{thm:3} need a refined quantitative asymptotic analysis of that in Jin-Li-Xiong \cite{JLX}. In the second order case, such type analysis was developed first by Chen-Lin  \cite{Chen-Lin} for the prescribing scalar curvature equation and then references cited above for the Yamabe equation. Since potentials in \eqref{eq:main1} and \eqref{eq:main2} are not geometric and their Taylor expansion polynomials of order $\ge 2$ have not to be orthogonal to the zeroth and first order polynomials, it is not possible to construct correctors.  It is unclear to us how to show higher order derivatives vanishing estimates. Furthermore, we lose the algebraic structure used by Khuri-Marques-Schoen \cite{KMS} to construct correctors in polynomial form.

The organization of the paper is as follows. In section \ref{sec:2}, we prove a localizing lemma in metric spaces by extending a result in \cite{Li-Xiong}. It allows us to localize bubbles interaction in bounded domains. In section \ref{sect:3}, we prove the existence and uniqueness of Green's functions as well as B\^ocher type theorem. In section \ref{sect:analysis},  we establish basic results of so-called isolated simple blow up points. Compared with the counterpart of \cite{JLX}, several new ingredients are introduced. In section \ref{sec:5}, we establish the refined  quantitative asymptotic analysis mentioned above. In section \ref{sec:6}, we estimate the Pohozaev integral of blow up solutions. The main theorems are proved in section \ref{sec:7}.

\bigskip

\noindent\textbf{Acknowledgments:}
The authors are grateful to Professor YanYan Li for his patient guidance and constant encouragement.

\medskip

\section{A localizing lemma}
\label{sec:2}

In this section, we prove the following lemma, which extends a result in Li-Xiong \cite{Li-Xiong}.

\begin{lem}\label{lem:select} Let $(\mathcal{M},d)$ be a complete metric space. Let $S_i\subset \mathcal{M}$, $i=1,2\dots$, be a sequence of sets of finite points, however, the cardinality of $S_i$ may tend to infinity. Suppose that $x_i,y_i\in S_i$ are distinct points satisfying $x_i,y_i\to \bar x$ as $i\to \infty$.
Define $f_i:S_i\to (0,\infty)$ by
\[
f_i(x):=\min_{x'\in S_i\setminus \{x\}} d(x',x).
\]Let $R_i\to \bar R\in (1,\infty]$ satisfying $R_i f_i(x_{i})\to 0$. Then subject to a subsequence of $i\to \infty$ one
can find $z_i\in  S_i \cap \textbf{B}_{(2R_if_i(x_{i}) }( x_i)$ satisfying
\be \label{eq:s-1}
f_i(z_i) \le (2R_i+1)f_i(x_{i})
\ee
and
\be
\min_{x\in  S_i\cap \textbf{B}_{R_i f_i(z_i)} (z_i)} f_i(x) \ge \frac12 f_i(z_i),
\ee
where $\textbf{B}_{r}(x)=\{y\in \mathcal{M}: d(x,y)<r\}$ for $r>0$.

\end{lem}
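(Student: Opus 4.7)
The plan is a greedy ``halving'' selection starting from $x_i$, a metric-space analogue of the centered blow-up procedure of Schoen. Set $z^{(0)}_i := x_i$. At step $k$, test whether
\[
\min_{x \in S_i \cap \textbf{B}_{R_i f_i(z^{(k)}_i)}(z^{(k)}_i)} f_i(x) \ge \tfrac12 f_i(z^{(k)}_i)
\]
holds. If yes, set $z_i := z^{(k)}_i$ and stop. If no, the failure is witnessed by some $z^{(k+1)}_i \in S_i \cap \textbf{B}_{R_i f_i(z^{(k)}_i)}(z^{(k)}_i)$ with $f_i(z^{(k+1)}_i) < \tfrac12 f_i(z^{(k)}_i)$; replace $z^{(k)}_i$ by $z^{(k+1)}_i$ and iterate.

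Termination uses the finiteness of $S_i$: each non-stopping step strictly halves the value $f_i(z^{(k)}_i)$, so that $f_i(z^{(k)}_i) < 2^{-k} f_i(x_i)$, while $f_i$ attains only finitely many positive values on a finite set, so only finitely many steps can occur. For the displacement bound, the triangle inequality combined with the geometric decay yields
\[
d(z_i, x_i) \le \sum_{k \ge 0} d(z^{(k+1)}_i, z^{(k)}_i) < R_i \sum_{k \ge 0} f_i(z^{(k)}_i) < R_i f_i(x_i) \sum_{k \ge 0} 2^{-k} = 2 R_i f_i(x_i),
\]
placing $z_i$ in the required ball; the value bound $f_i(z_i) \le f_i(x_i) \le (2R_i+1) f_i(x_i)$ is immediate from the halving, and the second conclusion of the lemma is precisely the stopping criterion at the terminal step.

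I do not anticipate any serious obstacle; the entire argument is a robust greedy selection and the analytic input is only the trivial geometric-series estimate above. The hypothesis $\bar R > 1$ is not strictly needed for the selection itself, but it ensures that the inner ball always contains a nearest neighbour of $z^{(k)}_i$, so that iterating is informative rather than trivially terminating, while $R_i f_i(x_i) \to 0$ together with $x_i \to \bar x$ guarantees $z_i \to \bar x$ as well, which is what one actually exploits downstream. The ``subsequence'' clause in the statement absorbs the fact that the number of halving steps may depend on $i$ in an uncontrolled way: the uniform bounds on $z_i$ hold for every $i$, and passing to a subsequence merely organizes subsequent limiting arguments.
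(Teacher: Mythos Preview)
Your proof is correct and follows essentially the same approach as the paper's: both run the iterative halving procedure $z^{(k+1)}_i$ with $f_i(z^{(k+1)}_i) < \tfrac12 f_i(z^{(k)}_i)$, use the geometric series to bound $d(z_i,x_i) < 2R_i f_i(x_i)$, and invoke finiteness of $S_i$ for termination. The only cosmetic difference is that the paper phrases it as a contradiction argument whereas you give the direct construction, and your bound $f_i(z_i)\le f_i(x_i)$ is in fact sharper than the paper's $f_i(z_i)\le (2R_i+1)f_i(x_i)$ obtained via the triangle inequality.
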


\begin{proof} Suppose the contrary, then there exists $N\in \mathbb{N}$ such that for any $i\ge N$, $z_i$ in the lemma can not been selected. Since $f_i(x_i)\le (2R_i+1) f_i(x_i)$, by the contradiction hypothesis, there must exist  $x_{i,1}\in  S_i\cap \textbf{B}_{R_i f_i(x_{i})} (x_{i})$ such that $f_{i}(x_{i,1})<\frac12 f_{i}(x_{i})$. Denote $x_{i,0}=x_i$.  We can define $x_{i,l}\in S_i$, $l=1,2\dots$,  satisfying
$f_i(x_{i,l})<\frac{1}{2} f_i(x_{i,(l-1)})$
 and $0<d(x_{i,l},x_{i,(l-1)})<  R_i f_i(x_{i,(l-1)})$
inductively as follows. Once $x_{i,l}$, $l\ge 2
$, is defined, we have, for $2\le m\le l$, that
\[
d(x_{i,m},x_{i,(m-1)})<R_i f_i(x_{i,(m-1)})<R_i 2^{-1} f_i(x_{i,(m-2)})<\cdots <R_i 2^{1-m} f_i(x_{i}),
\]
which implies
$$
d(x_{i,l},x_{i})\le \sum_{m=1}^l d(x_{i,m},x_{i,(m-1)})
< R_i f_i(x_{i}) \sum_{m=1}^l 2^{1-m}
< 2R_i  f_i(x_{i}),
$$
and
$$
f_i(x_{i,l})\le d(x_{i,l}, x_{i})+f_i(x_{i})\le (2R_i+1) f_i(x_{i}).
$$
So $z_i:=x_{i,l}$ satisfies
 $z_i\in  S_i\cap \textbf{B}_{R_i f_i(x_i)}( x_i)$
and \eqref{eq:s-1}.
By  the contradiction hypothesis, there must exist  $x_{i,(l+1)}\in  S_i\cap \textbf{B}_{R_i f_i(x_{i,l})} (x_{i,l})$ such that $f_{i}(x_{i,(l+1)})<\frac12 f_{i}(x_{i,l})$.
But $S_i$ is a finite set and we can not work for all $l\ge 2$. Therefore, the lemma follows.

\end{proof}

\section{Green's function and B\^ocher type theorems}
\label{sect:3}

Hereby, we use capital letters, such as $X=(x,t)$, to denote points in $\R^{n+1}$, and $t\geq 0$ usually.
$\B_R(X)$ denotes as the ball in $\R^{n+1}$ with radius $R$ and center $X$, $\B^+_R(X)$ as $\B_R(X)\cap \R^{n+1}_+$, and $B_R(x)$ as the ball in $\R^{n}$ with radius $R$ and center $x$. We also write $\B_R(0), \B^+_R(0), B_R(0)$ as $\B_R, \B_R^+, B_R$ for short. We use $\pa' \B_R^+(X)=\pa \B_R^+(X) \cap \pa \R^{n+1}_+, \pa'' \B_{R}^+(X)=\pa \B_R^+(X) \setminus \pa' \B_R^+(X) $. Through the extension formulation for $(-\Delta)^\sigma$ in \cite{CaS}, the equation \eqref{eq:main1} is equivalent to a degenerate elliptic equation with a Neumann boundary condition in one dimension higher:
\be\label{eq:ex0-1}
\begin{cases}
\mathrm{div}(t^{1-2\sigma} \nabla_{X} U)=0 & \quad \mbox{in }\R^{n+1}_+,\\
\frac{\pa U}{\pa \nu^\sigma} =N(\sigma)a(x)u+ N(\sigma)u^{\frac{n+2\sigma}{n-2\sigma}} &\quad \mbox{for }x\in B_3,
\end{cases}
\ee
where  $N_\sigma=2^{1-2\sigma}\Gamma(1-\sigma)/\Gamma(\sigma)$,
$$
\frac{\pa U}{\pa \nu^\sigma}(x,0)= -\lim_{t\to 0^+} t^{1-2\sigma} \pa_t U(x,t),
$$
and $u(x)=U(x,0)$. Since the Dirichlet problem does not have uniqueness, the extension will always refer to the \textit{canonical} one obtained by Poisson type integral:
\be\label{eq:poisson}
U(x,t)=\mathcal{P}_\sigma * u(x,t)=\beta(n,\sigma) \int_{\R^n} \frac{t^{2\sigma}}{(|x-y|^2+t^2)^{\frac{n+2\sigma}{2}}} u(y)\,\ud y,
\ee
where $\beta(n,\sigma)$ is a normalization constant.

For every open set $\om \subset \R^{n+1}_+$, we denote $W^{1,p}(t^{1-2\sigma}, \om)$, $1\le p<\infty$, the weighted Sobolev space equipped with the norm
\[
\|U\|_{W^{1,p}(t^{1-2\sigma},\om)}= (\int_{\om} t^{1-2\sigma}(U^p+|\nabla U|^p)\,\ud x\ud t)^{\frac{1}{p}}.
\]
It is easy to check that if $u\in C^2(B_3)\cap \mathcal{L}_\sigma(\R^n)$, then $\mathcal{P}_\sigma *u\in W^{1,2}(t^{1-2\sigma}, B_\rho \times T)$ for any $\rho<3$ and $T>0$.
The weighted space $W^{1,2}(t^{1-2\sigma},\om)$ and  weak solutions in the space for linear equation
\[
\mathrm{div}(t^{1-2\sigma} \nabla U)=0 \quad \mbox{in } \mathcal{B}_1^+, \quad \frac{\pa }{\pa \nu^\sigma}U(x,0)=a(x)U(x,0)+b(x)
\]
can be found in Cabr\'e-Sire \cite{CSi}, Jin-Li-Xiong \cite{JLX} and etc.
Classical regularity theory, such as Harnack inequality, H\"older estimates and Schauder estimates still hold. However, there is no weighted $W^{1,p}$, $p>2$,  theory.

The Harnack inequality will be used repeatedly, and thus we state it here. One can find proofs from \cite{CSi} or \cite{TX}.
\begin{prop}\label{prop:harnack} Let $ U \in W^{1,2}(t^{1-2\sigma}, \B_R^+)$ be a nonnegative weak solution of
\[
\begin{cases}
\mathrm{div}(t^{1-2\sigma} \nabla_{X} U)=0 & \quad \mbox{in }\B_R^+,\\
\frac{\pa U}{\pa \nu^\sigma} = a(x) U(x,0) &\quad \mbox{on }\pa' \B_R.
\end{cases}
\]
 If $a\in L^p(B_R)$ for some $p>n/2\sigma$, then we have
\[
\sup_{\overline \B_{R/2}^+} U\leq C(R) \inf_{\overline \B_{R/2}^+} U,
\]
where $C$ depends only on $n,\sigma, R$ and $\|a\|_{L^p(B_{R})}$.
\end{prop}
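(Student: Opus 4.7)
The plan is to prove this via Moser iteration adapted to the weight $t^{1-2\sigma}$. The key structural observation is that, after even reflection across $\{t=0\}$, the weight becomes $|t|^{1-2\sigma}$ on $\R^{n+1}$, which lies in the Muckenhoupt class $A_2$ (since $-1<1-2\sigma<1$). This places us squarely in the Fabes--Kenig--Serapioni framework. If I let $\widetilde U$ denote the even extension of $U$ across $\{t=0\}$, then for any even test function $\Phi\in W^{1,2}_0(|t|^{1-2\sigma},\B_R)$ the weak formulation reads
\[
\int_{\B_R} |t|^{1-2\sigma}\,\nabla \widetilde U\cdot \nabla \Phi\,\ud X \;=\; 2\int_{B_R} a(x)\,\widetilde U(x,0)\Phi(x,0)\,\ud x,
\]
so the Neumann boundary datum becomes an interior lower-order term supported on $\{t=0\}$.

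Next I would feed test functions of the form $\Phi=(U+\epsilon)^{\beta}\eta^2$ (with $\eta$ a standard radial cutoff and $\epsilon\downarrow 0$) into the weak formulation and carry out the standard Moser procedure. Two weighted inequalities drive everything: the Fabes--Kenig--Serapioni Sobolev inequality
\[
\Bigl(\int_{\B_R} |t|^{1-2\sigma}|v|^{2\chi}\Bigr)^{1/\chi}\;\le\; C\int_{\B_R}|t|^{1-2\sigma}|\nabla v|^{2}, \qquad \chi=1+\tfrac{2}{n},
\]
for $v\in C_c^\infty(\B_R)$, and the trace embedding $W^{1,2}(|t|^{1-2\sigma},\B_R)\hookrightarrow H^\sigma(B_R)\hookrightarrow L^{2n/(n-2\sigma)}(B_R)$ that follows from the Caffarelli--Silvestre characterization. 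The boundary contribution is controlled by H\"older's inequality:
\[
\int_{B_R}|a|(U+\epsilon)^{\beta+1}\eta^{2}\;\le\;\|a\|_{L^p(B_R)}\,\bigl\|(U+\epsilon)^{(\beta+1)/2}\eta\bigr\|_{L^{2p/(p-1)}(B_R)}^{2}.
\]
Because $p>n/(2\sigma)$, we have $2p/(p-1)<2n/(n-2\sigma)$, so interpolating between $L^2(B_R)$ and the critical Sobolev exponent, and then invoking the trace embedding, lets me absorb this term into $\int |t|^{1-2\sigma}|\nabla(\,\cdot\,)|^{2}$ at the cost of a harmless $L^2$ remainder. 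Iterating over dyadic radii and $\beta_k=\chi^k(1+\beta_0)-1$ then produces the supremum bound $\sup_{\B_{R/2}^{+}} U\le C\,\|U\|_{L^{q_0}(|t|^{1-2\sigma},\B_{3R/4}^{+})}$ for any fixed $q_0>0$, with $C=C(n,\sigma,R,\|a\|_{L^p})$.

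The main obstacle is bridging the gap between the positive-power and negative-power iterations in order to reach $\sup U\le C\inf U$. Running the iteration with $\beta<-1$ (on $U+\epsilon$) yields the reverse-H\"older style bound $\|U\|_{L^{-q_0}(|t|^{1-2\sigma},\B_{3R/4}^{+})}\le C\inf_{\B_{R/2}^{+}} U$ for some small $q_0>0$. To close the loop one must show that $\log U$ belongs to a weighted $\mathrm{BMO}$ on $\B_{3R/4}^{+}$, which requires a Caccioppoli-type estimate for $\log(U+\epsilon)$ with a boundary integral containing $a(x)\log(U(x,0))$; the $L^p$-hypothesis on $a$ combined with the trace embedding again makes this term subcritical and therefore manageable. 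Once $\log U\in\mathrm{BMO}$ in the weighted sense, the John--Nirenberg inequality (which holds for $A_2$-weighted balls) supplies a uniform $q_0>0$ and a constant $C'$ with $\|U\|_{L^{q_0}}\le C'\|U\|_{L^{-q_0}}$, and chaining the three estimates yields the claimed Harnack inequality with constant depending only on $n,\sigma,R$ and $\|a\|_{L^{p}(B_R)}$.
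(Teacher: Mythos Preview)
The paper does not actually prove this proposition; it simply states the result and refers the reader to Cabr\'e--Sire \cite{CSi} and Tan--Xiong \cite{TX} for proofs. Your Moser-iteration scheme, based on the even reflection to obtain the $A_2$ weight $|t|^{1-2\sigma}$, the Fabes--Kenig--Serapioni weighted Sobolev inequality, and the trace embedding to absorb the subcritical boundary term $a\in L^p$ with $p>n/2\sigma$, is exactly the approach those references carry out, so your proposal is correct and aligned with the intended argument.
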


Denote
\be\label{eq:flat-neumann}
\mathcal{N}_\sigma(x,t):=c(n,\sigma) |X|^{2\sigma-n},
\ee where $c(n,\sigma)$ is a normalization constant. Then
\begin{equation*}
\begin{cases}
\mathrm{div}(t^{1-2\sigma}\nabla \mathcal{N}_\sigma )=0& \quad \mbox{in }\R^{n+1}_+,\\
\frac{\pa }{\pa \nu^\sigma}\mathcal{N}_\sigma  =\delta_0& \quad \mbox{on } \pa\R^{n+1}_+,
\end{cases}
\end{equation*}
in distribution sense, where $\delta_0$ is the Dirac measure centered at $0$.

\begin{prop}\label{prop:greenexitence} Given a function $a\in L^\infty(B_1)$, we can find a constant  $0<\tau\le 1$, depending only on $n$, $\sigma$ and $\|a\|_{L^\infty(B_1)}$, such that there exists  $G(X)\in W^{1,2}(t^{1-2\sigma}, \mathcal{B}_{\tau}^+\setminus \mathcal{B}_{\rho}^+)$ for any $\rho>0$ satisfying
\begin{equation}\label{eq:toprove}
\begin{cases}
\mathrm{div}(t^{1-2\sigma}\nabla G)=0& \quad \mbox{in }\mathcal{B}_\tau^+,\\
\frac{\pa }{\pa \nu^\sigma} G=a G& \quad \mbox{on } \pa' \mathcal{B}^+_{\tau}\setminus\{0\},\\
G=0& \quad \mbox{on } \pa'' \mathcal{B}^+_{\tau},
\end{cases}
\end{equation}
in weak sense, and
\be\label{eq:asmp1}
\lim_{X\to 0}|X|^{n-2\sigma} G(X)=c(n,\sigma).
\ee
Here $c(n,\sigma)>0$ is the constant in \eqref{eq:flat-neumann}. Furthermore, if $a\in C^1(B_1)$, then
\be \label{eq:green's expan1}
G(X)=c(n,\sigma)|X|^{2\sigma-n}+E(X),
\ee
where $E(X)$ satisfies
\be\label{eq:greens'expan2}
|E(X)|+|X||\nabla_x E(X)|+|X|^{2\sigma} |t^{1-2\sigma}\pa _t E(X)|\le C|X|^{4\sigma-n}.
\ee

\end{prop}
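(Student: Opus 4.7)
My plan is to construct $G$ by the \emph{parametrix method}, using $\mathcal{N}_\sigma(X) = c(n,\sigma)|X|^{2\sigma-n}$ from \eqref{eq:flat-neumann} as an explicit first approximation and solving for the remainder via an integral equation on the flat boundary. Writing $G = \mathcal{N}_\sigma + E$, the remainder $E$ should satisfy
\[
\mathrm{div}(t^{1-2\sigma}\nabla E) = 0 \text{ in } \B_\tau^+, \quad \frac{\pa E}{\pa \nu^\sigma} = a(\mathcal{N}_\sigma + E) \text{ on } \pa' \B_\tau^+, \quad E = -\mathcal{N}_\sigma \text{ on } \pa'' \B_\tau^+,
\]
in which the Dirac source at the origin has been absorbed into $\mathcal{N}_\sigma$ and only integrable data on the flat face remains.

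\textbf{Integral equation on $B_\tau$.} Let $g(x) := G(x,0)$. The half-space Neumann problem for $\mathrm{div}(t^{1-2\sigma}\nabla \cdot)$ is solved by convolution against $c(n,\sigma)|x-y|^{2\sigma - n}$ on the boundary, and combining this with a bounded corrector $R_\tau$ that accounts for the curved Dirichlet face $\pa'' \B_\tau^+$ together with a smooth contribution $h_\tau$ from the outer datum $-\mathcal{N}_\sigma|_{\pa'' \B_\tau^+}$, I obtain the boundary integral equation
\[
g(x) = c(n,\sigma)|x|^{2\sigma - n} + \int_{B_\tau} K_\tau(x,y)\, a(y) g(y) \, \ud y + h_\tau(x),
\]
where $K_\tau(x,y) = c(n,\sigma)|x-y|^{2\sigma - n} + R_\tau(x,y)$. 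I would solve this in the weighted space $\mathcal{X} = \{g : \|g\|_\mathcal{X} := \sup_{x \in B_\tau} |x|^{n-2\sigma}|g(x)| < \infty\}$, whose weight matches the expected singular decay. Scaling $x = \tau \xi$, $y = \tau z$ and applying the Riesz composition estimate $\int_{B_1} |\xi - z|^{2\sigma - n}|z|^{2\sigma - n} \, \ud z \le C |\xi|^{4\sigma - n}$ (with logarithmic variants at the borderline) shows that the integral operator has $\mathcal{X}$-operator norm bounded by $C \|a\|_{L^\infty} \tau^{2\sigma}$. Choosing $\tau$ small depending only on $n, \sigma, \|a\|_{L^\infty(B_1)}$ makes the operator a contraction, and Banach fixed point produces a unique $g \in \mathcal{X}$.

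\textbf{Reconstruction and asymptotic expansion.} Given $g$ on $B_\tau$, $G$ is recovered in $\B_\tau^+ \setminus \{0\}$ as the solution of $\mathrm{div}(t^{1-2\sigma}\nabla G) = 0$ in $\B_\tau^+$ with $G|_{\pa' \B_\tau^+} = g$ and $G|_{\pa'' \B_\tau^+} = 0$; well-posedness on each annular shell $\B_\tau^+ \setminus \B_\rho^+$ follows from Lax--Milgram in the weighted Sobolev space (see Cabr\'e--Sire \cite{CSi}, Jin--Li--Xiong \cite{JLX}), and the solutions are patched by a monotone limit as $\rho \to 0$. By construction $\pa G/\pa \nu^\sigma = aG$ on $\pa' \B_\tau^+ \setminus \{0\}$. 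The contraction bound $\|g - c(n,\sigma)|x|^{2\sigma - n}\|_\mathcal{X} = O(\tau^{2\sigma})$ together with the Poisson-type representation yields \eqref{eq:asmp1}. Under the stronger hypothesis $a \in C^1$, the extra regularity of $a \mathcal{N}_\sigma$ upgrades the contraction estimate to $|E(X)| \le C|X|^{4\sigma - n}$, namely the pointwise bound in \eqref{eq:greens'expan2}. For the gradient bounds $|X||\nabla_x E| + |X|^{2\sigma}|t^{1-2\sigma}\pa_t E| \le C|X|^{4\sigma - n}$, I would rescale $E$ around each $X_0 \ne 0$ to the unit scale, apply interior H\"older and Schauder estimates for the degenerate equation together with Proposition \ref{prop:harnack}, and scale back.

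\textbf{Main obstacle.} The principal difficulty is the absence of weighted $W^{1,p}$ theory for $p > 2$, noted in the excerpt, which rules out solving the $E$-problem by classical $L^p$ duality and forces the integral-equation approach in a weighted $L^\infty$ space tailored to the $|x|^{2\sigma - n}$ singular decay. The most delicate technical point is constructing the ball corrector $R_\tau$ and the outer-face correction $h_\tau$ so that both are smooth and contribute only lower-order error terms that do not spoil the $|X|^{4\sigma - n}$ remainder estimate. Verifying the Riesz composition estimate uniformly across dimensions and handling borderline cases where $4\sigma - n = 0$ introduces logarithms require care but no fundamentally new ideas.
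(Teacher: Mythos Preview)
Your overall strategy---parametrix with $\mathcal{N}_\sigma$ as the leading singularity---matches the paper's, but the execution diverges in two important ways.

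\textbf{Comparison with the paper.} The paper performs a \emph{finite} iteration entirely with the half-space kernel: set $V_0=\mathcal{N}_\sigma$ and $V_k=\mathcal{N}_\sigma*(aV_{k-1})$ for $k=1,\dots,[\tfrac{n}{2\sigma}]$, so that $|V_k|\le C|X|^{2\sigma(k+1)-n}$ and the last term $V_{[n/2\sigma]}$ is H\"older. With $V=\sum V_k$, the remaining correction $W$ solves a \emph{regular} mixed problem on $\mathcal{B}_\tau^+$ (Neumann datum $aV_{[n/2\sigma]}\in C^\alpha$, Dirichlet datum $-V$ on the curved face), and $\tau$ is chosen only so that the bilinear form is coercive; one application of Lax--Milgram finishes. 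Thus the paper never constructs a domain-dependent Green kernel, and the boundary face $\partial''\mathcal{B}_\tau^+$ is handled in a single regular step. Your approach instead does an \emph{infinite} iteration (Banach fixed point) in the weighted space $\mathcal{X}$, which requires you to build the ball kernel $K_\tau=\mathcal{N}_\sigma+R_\tau$ and the outer-face correction $h_\tau$ up front; you correctly flag this as the main obstacle, but it is work the paper simply sidesteps.

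\textbf{A genuine gap.} Your reconstruction step is not well-posed as written. You propose to recover $G$ on each annulus $\mathcal{B}_\tau^+\setminus\mathcal{B}_\rho^+$ by solving the \emph{Dirichlet} problem $G|_{\partial'\mathcal{B}_\tau^+}=g$, $G|_{\partial''\mathcal{B}_\tau^+}=0$, then patching as $\rho\to0$. But you have not specified any condition on the inner curved face $\partial''\mathcal{B}_\rho^+$, so the annular problem is underdetermined; and even once you prescribe something there, nothing in this Dirichlet formulation forces $\partial G/\partial\nu^\sigma=aG$ on the flat face---that is a Neumann condition, and it does not follow from matching the trace $g$ unless you already know $G$ is given by the integral representation. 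The correct reconstruction is to \emph{define} $G$ directly by the formula $G(X)=\mathcal{N}_\sigma(X)+\int_{B_\tau}K_\tau(X,y)\,a(y)g(y)\,\ud y+H_\tau(X)$ (with $K_\tau(X,y)$ the full Green kernel and $H_\tau$ the harmonic extension of $-\mathcal{N}_\sigma|_{\partial''\mathcal{B}_\tau^+}$), from which both the interior equation and the flat Neumann condition are immediate; the Dirichlet detour is neither needed nor correct. Once you fix this, the remaining argument (Riesz composition for the contraction bound, rescaled Schauder for the derivative estimates on $E$) goes through, though the paper's finite-iteration route is cleaner precisely because it avoids constructing $K_\tau$, $R_\tau$, and $h_\tau$ altogether.
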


\begin{proof}  Denote $V_0=c(n,\sigma)|X|^{2\sigma-n}$ and define inductively
\[
V_k(X)= \mathcal{N}_\sigma * (a V_{k-1})(x,t)=\int_{\R^n} \mathcal{N}_\sigma(x-y,t) a(y)V_{k-1}(y)\,\ud y \quad \mbox{if }|X|\le 2,
\]
and
\[
V_k(X)=0 \quad \mbox{if }|X|\ge 2, \quad k=1,2,\dots, [\frac{n}{2\sigma}].
\]
Clearly, $V_k\in C^\infty(\bar \B_{3/2}^+\setminus \{0\})$, for $k<[\frac{n}{2\sigma}]$ we have
\[
|V_{k}(X)|\le C |X|^{2\sigma(k+1)-n} \quad \forall~ |X|\le 1
\]
and $V_{[\frac{n}{2\sigma}]}(X)$ is H\"older continuous in $\B_1^+$. Furthermore,
\begin{equation*}
\begin{cases}
\mathrm{div}(t^{1-2\sigma}\nabla V_k)=0& \quad \mbox{in }\mathcal{B}_2^+,\\
\frac{\pa }{\pa \nu^\sigma} V_k=a V_{k-1}& \quad \mbox{on } \pa' \mathcal{B}^+_{2}\setminus\{0\}
\end{cases}
\end{equation*}
in weak sense.
Let
\[
V=\sum_{k=0}^{[\frac{n}{2\sigma}]} V_k.
\]

Choose $\tau$ to be small such that
\be\label{eq:coercive}
\frac{1}{2}\int_{\mathcal{B}_\tau^+} t^{1-2\sigma} |\nabla \varphi|^2- \int_{\pa' \mathcal{B}_\tau^+} a\varphi^2\ge 0 \quad  \forall ~\varphi \in W^{1,2}(t^{1-2\sigma},\mathcal{B}_\tau^+ ), \varphi =0 \mbox{ on }\pa'' \mathcal{B}_\tau^+.
\ee
By Lax-Milgram theorem,
\begin{equation}\label{eq:reduced}
\begin{cases}
\mathrm{div}(t^{1-2\sigma}\nabla W)=0& \quad \mbox{in }\mathcal{B}_\tau^+,\\
\frac{\pa }{\pa \nu^\sigma} W=a W+a V_{[\frac{n}{2\sigma}]}& \quad \mbox{on } \pa' \mathcal{B}^+_{\tau},\\
W=-V& \quad \mbox{on } \pa'' \mathcal{B}^+_{\tau}.
\end{cases}
\end{equation}
has a unique weak solution in $W^{1,2}(t^{1-2\sigma}, \mathcal{B}_\tau^+)$.
Let $G:=V+W$ and $E:= (V-V_0)+W$. By the construction of $V$ and the regularity theory in  \cite{JLX}, the proposition follows immediately.

\end{proof}

\begin{rem} \label{rem:mp} From the selecting $\tau$ by \eqref{eq:coercive}, the maximum principle holds in $B_{\tau}$. And thus $G(X)>0$ in $\mathcal{B}_\tau^+\cup \pa' \mathcal{B}_\tau^+$.
\end{rem}

\begin{prop}[B\^ocher type]\label{prop:bocher}
 Suppose that $U \in W^{1,2}(t^{1-2\sigma}, \mathcal{B}_{1}^+\setminus \mathcal{B}_{\rho}^+)$ for any $\rho>0$ is a nonnegative weak solution
\begin{equation}\label{eq:toprove-2}
\begin{cases}
\mathrm{div}(t^{1-2\sigma}\nabla U)=0& \quad \mbox{in }\mathcal{B}_1^+,\\
\frac{\pa }{\pa \nu^\sigma} U=a U& \quad \mbox{on } \pa' \mathcal{B}^+_{1}\setminus\{0\},\\
\end{cases}
\end{equation}
 where $a\in C^{1}(B_{1})$, then
\[
U(X)=A G(X)+H(X) \quad \mbox{for } 0<|X|\leq \tau,
\]
where $A$ is some nonnegative constant, $0<\tau<1$ and $G(X)$ are as in Proposition \ref{prop:greenexitence}, and $H$ is a $W^{1,2}(t^{1-2\sigma}, \mathcal{B}_\tau^+)$ weak solution of
\begin{equation*}
\begin{cases}
\mathrm{div}(t^{1-2\sigma}\nabla H)=0& \quad \mbox{in }\mathcal{B}_\tau^+,\\
\frac{\pa }{\pa \nu^\sigma} H=a H& \quad \mbox{on } \pa' \mathcal{B}^+_{\tau}.\\
\end{cases}
\end{equation*}
\end{prop}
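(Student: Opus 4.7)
The plan is to extract the leading singular contribution of $U$ at the origin proportional to $G$, and then to show that the remainder extends as a regular weak solution across $0$. Concretely, the goal is to take $A := \lim_{r \to 0^+} \min_{\pa'' \B_r^+} U/G$, verify that $A \in [0,\infty)$, set $H := U - AG$, and prove that $H$ lies in $W^{1,2}(t^{1-2\sigma}, \B_\tau^+)$ and is a weak solution of the same Neumann problem across $0$.

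The first step is oscillation control for the ratio $U/G$ on shrinking half-spheres. Since $U$ and $G$ are both nonnegative weak solutions of the same homogeneous equation with the same Neumann coefficient $a$ on $\pa'\B_\tau^+ \setminus \{0\}$, a chain of overlapping half-balls of radius $\sim r$ combined with Proposition \ref{prop:harnack} yields
\[
\max_{\pa'' \B_r^+} U \le C \min_{\pa'' \B_r^+} U, \qquad \max_{\pa'' \B_r^+} G \le C \min_{\pa'' \B_r^+} G,
\]
with $C$ independent of $r \in (0,\tau/2)$. Setting $m(r) := \min_{\pa'' \B_r^+} U/G$ and $M(r) := \max_{\pa'' \B_r^+} U/G$, this gives $M(r) \le C^2 m(r)$. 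Using the comparison principle of Remark \ref{rem:mp} in the annulus $\B_\tau^+ \setminus \overline{\B_r^+}$, one checks that $m(r)$ and $M(r)$ remain bounded as $r \to 0^+$ (otherwise $U$ would blow up on $\pa''\B_\tau^+$). Passing to a subsequence one gets $m(r) \to A \in [0,\infty)$, and a standard iteration applying Harnack to the nonnegative solution $U - (A-\varepsilon)G$ near $0$, plus the same comparison on the outer annulus, forces $M(r) \to A$ as well, so $U/G \to A$ uniformly on $\pa''\B_r^+$ as $r \to 0$.

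With this $A$ fixed, set $H := U - AG$. Then $H$ is a weak solution of the same equation in $\B_\tau^+ \setminus \{0\}$ with Neumann condition $\pa H/\pa\nu^\sigma = a H$ on $\pa'\B_\tau^+ \setminus \{0\}$, and by construction $H(X)/G(X) \to 0$ as $X \to 0$. The asymptotics $G(X) \sim c(n,\sigma)|X|^{2\sigma - n}$ from Proposition \ref{prop:greenexitence} then give $|H(X)| = o(|X|^{2\sigma - n})$ near $0$. The final step is a removable singularity argument: test the weak formulation for $H$ against cutoffs $\eta_\rho \in C_c^\infty(\B_\tau^+ \cup \pa' \B_\tau^+)$ that vanish on $\B_\rho^+$ and equal $1$ outside $\B_{2\rho}^+$, with $|\nabla \eta_\rho| \le C/\rho$. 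Combining the pointwise decay of $H$ with a weighted Caccioppoli estimate on the shell $\B_{2\rho}^+ \setminus \B_{\rho/2}^+$ gives
\[
\int_{\B_\tau^+} t^{1-2\sigma}\, |\nabla H|\, |\nabla \eta_\rho|\, \ud X \to 0 \quad \mbox{as } \rho \to 0,
\]
which promotes $H$ to a global $W^{1,2}(t^{1-2\sigma}, \B_\tau^+)$ weak solution of the Neumann problem across $0$.

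The main obstacle I expect is the removable singularity step: converting the pointwise bound $|H(X)| = o(|X|^{2\sigma - n})$ into the above weighted flux-through-small-spheres estimate requires Caccioppoli estimates on half-annuli touching the Neumann face, together with the delicate scaling check that $\rho^{-1}\cdot \rho^{2\sigma-n}\cdot \rho^n = \rho^{2\sigma}\to 0$. The nonnegativity $A \ge 0$ is automatic since $U \ge 0$ and $G > 0$ on $\B_\tau^+ \cup \pa'\B_\tau^+$ by Remark \ref{rem:mp}.
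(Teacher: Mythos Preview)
Your overall strategy---extract the leading $G$-component and show the remainder is a removable singularity---matches the paper's. The execution, however, has a real gap in the step where you claim $U/G \to A$ uniformly on $\pa''\B_r^+$.

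You define $A$ as a subsequential limit of $m(r) = \min_{\pa''\B_r^+} U/G$ and then assert that ``applying Harnack to the nonnegative solution $U - (A-\varepsilon)G$'' forces $M(r)\to A$. But $U - (A-\varepsilon)G$ is not known to be nonnegative in a punctured neighborhood of $0$: you only have $U \ge m(r_k)G$ on the single half-sphere $\pa''\B_{r_k}^+$, not on a full half-annulus, so Harnack does not apply. From $M(r)\le C^2 m(r)$ alone you only get $\limsup M(r_k)\le C^2 A$, not convergence to $A$. What you are implicitly reaching for is a boundary Harnack principle for the ratio of two positive solutions, which is substantially more than what Proposition~\ref{prop:harnack} gives. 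The paper avoids this entirely by defining instead
\[
\bar A := \sup\{\lambda \ge 0 : \lambda G \le U \ \mbox{in }\B_\tau^+\setminus\{0\}\},
\]
so that $H := U - \bar A G \ge 0$ automatically. Then one reruns the same comparison argument on $H$ to conclude $H = o(|X|^{2\sigma-n})$; nonnegativity of $H$ is what makes Harnack and the maximum principle available at each step.

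Your removable-singularity sketch is also incomplete. The Caccioppoli-on-shells computation shows the flux term $\int t^{1-2\sigma}|\nabla H||\nabla\eta_\rho|$ vanishes, but it does \emph{not} show $\int_{\B_\tau^+} t^{1-2\sigma}|\nabla H|^2 < \infty$: summing $\int_{\mbox{shell}} t^{1-2\sigma}|\nabla H|^2 \lesssim o(1)\rho^{2\sigma-n}$ over dyadic shells diverges. The paper (Lemma~\ref{lem:bocher1}) first uses the maximum principle (with $H\ge 0$ and $H=o(G)$) to prove $H$ is \emph{bounded} near $0$; boundedness then gives the Schauder-type gradient bound $|X||\nabla_x H| + |X|^{2\sigma}|t^{1-2\sigma}\pa_t H| \le C$, from which the $W^{1,2}$ estimate follows by the cutoff argument you describe.
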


By Proposition \ref{prop:blowupbubble}, we immediately have
\begin{cor} \label{cor:uniqueness} $G(X)$ constructed in Proposition \ref{prop:greenexitence} is unique.

\end{cor}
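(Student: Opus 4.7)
The plan is to reduce uniqueness to the coercivity \eqref{eq:coercive} that was used to pick $\tau$ in Proposition \ref{prop:greenexitence}, with the B\^ocher type decomposition of Proposition \ref{prop:bocher} serving as the bridge that turns the difference of two candidate Green's functions into an admissible element of the natural energy space. The key observation is that, individually, Green's functions are not in $W^{1,2}(t^{1-2\sigma})$ near the origin, but after subtracting two of them with the same leading normalization the difference is.

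Concretely, suppose $G_1:=G$ and $G_2$ both satisfy the conclusion of Proposition \ref{prop:greenexitence} on $\mathcal{B}_\tau^+$. By Remark \ref{rem:mp} both are nonnegative on $\mathcal{B}_\tau^+$, so Proposition \ref{prop:bocher} applies to $G_2$ viewed as a nonnegative singular weak solution of \eqref{eq:toprove-2}, producing a constant $A\ge 0$ and a regular weak solution $H\in W^{1,2}(t^{1-2\sigma},\mathcal{B}_\tau^+)$ with $G_2=AG_1+H$ on $0<|X|\le \tau$. Since $H$ is a weighted-$W^{1,2}$ weak solution of the homogeneous Robin problem, it is locally bounded up to $0$ by the standard H\"older regularity theory for this degenerate operator; matching the common normalization $|X|^{n-2\sigma}G_i(X)\to c(n,\sigma)$ as $X\to 0$ in the identity $G_2=AG_1+H$ therefore forces $A=1$. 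Setting $W:=G_2-G_1$, the previous step identifies $W$ with $H$ near $0$, so $W\in W^{1,2}(t^{1-2\sigma},\mathcal{B}_\tau^+)$ (the integrability away from the origin being already built into the hypotheses on $G_1,G_2$), and $W$ is a weak solution of the homogeneous degenerate equation with $W=0$ on $\pa''\mathcal{B}_\tau^+$ and $\tfrac{\pa W}{\pa\nu^\sigma}=aW$ on $\pa'\mathcal{B}_\tau^+$. Testing this problem against $W$ itself and invoking \eqref{eq:coercive} yields $\tfrac12\int_{\mathcal{B}_\tau^+} t^{1-2\sigma}|\nabla W|^2\le 0$, whence $W\equiv 0$, i.e.\ $G_1=G_2$.

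The main obstacle I anticipate is not the energy step but the preceding extension of $W$ across the puncture as a global weighted Sobolev function. Each $G_i$ carries a leading singularity $c(n,\sigma)|X|^{2\sigma-n}$ which is precisely \emph{not} in $W^{1,2}(t^{1-2\sigma})$ near $0$, so a crude subtraction is not enough; one genuinely needs the B\^ocher decomposition to produce a regular remainder $H$ and to justify that the principal singular parts of $G_1$ and $G_2$ cancel exactly. All the work is in arranging that $A=1$ and that $H$ is regular enough to sit in the coercivity class; once these are in hand the proof is a one-line energy estimate.
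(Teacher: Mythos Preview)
Your argument is correct and is precisely the route the paper intends: the paper's entire proof is the one-line remark that the corollary follows immediately from the B\^ocher-type Proposition \ref{prop:bocher} (the cross-reference to \texttt{prop:blowupbubble} in the source is evidently a typo for \texttt{prop:bocher}), and you have faithfully unpacked that remark---decompose $G_2=AG_1+H$, match the common normalization \eqref{eq:asmp1} to force $A=1$, and then kill the regular remainder $H=G_2-G_1\in W^{1,2}(t^{1-2\sigma},\mathcal{B}_\tau^+)$ with zero lateral trace via the coercivity \eqref{eq:coercive}. The only cosmetic point is that Proposition \ref{prop:bocher} is stated for solutions on $\mathcal{B}_1^+$ while $G_2$ lives on $\mathcal{B}_\tau^+$, but the proof of that proposition takes place entirely inside $\mathcal{B}_\tau^+$, so this is harmless.
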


The proof of Proposition \ref{prop:bocher} adapts some idea from Li-Zhu \cite{Li-Zhu99}.

\begin{lem}\label{lem:bocher1} Assume the assumptions in Proposition \ref{prop:bocher}. If in addition
$$ U(X)=o(|X|^{2\sigma-n}) ~as~|X|\rightarrow 0,$$
 then $U\in W^{1,2}(t^{1-2\sigma}, \mathcal{B}_1^+)$ and thus $0$ is a removable singularity of $U$.
\end{lem}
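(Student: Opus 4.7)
The plan is to peel off a bounded reference solution and identify the remainder with $0$ by comparison against arbitrarily small multiples of the Green's function $G$ from Proposition~\ref{prop:greenexitence}, using a weak maximum principle.

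First I would invoke Lax--Milgram together with the coercivity \eqref{eq:coercive} to construct $U^{*}\in W^{1,2}(t^{1-2\sigma},\mathcal{B}_{\tau}^{+})$ solving
\[
\mathrm{div}(t^{1-2\sigma}\nabla U^{*})=0\ \text{in }\mathcal{B}_{\tau}^{+},\quad \tfrac{\partial U^{*}}{\partial\nu^{\sigma}}=aU^{*}\ \text{on }\partial'\mathcal{B}_{\tau}^{+},\quad U^{*}=U\ \text{on }\partial''\mathcal{B}_{\tau}^{+}.
\]
Being a global $W^{1,2}$ weak solution of the linear equation with bounded potential, $U^{*}$ is bounded near $0$ by the Moser--Harnack theory (Proposition~\ref{prop:harnack}). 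Hence $V:=U-U^{*}$ is a weak solution of the same homogeneous problem in $\mathcal{B}_{\tau}^{+}\setminus\{0\}$, vanishes on $\partial''\mathcal{B}_{\tau}^{+}$, and still satisfies $V(X)=o(|X|^{2\sigma-n})$ as $|X|\to 0$.

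Fix $\epsilon>0$. The expansion \eqref{eq:green's expan1}--\eqref{eq:greens'expan2}, with normalisation \eqref{eq:asmp1}, yields $G(X)\ge \tfrac12 c(n,\sigma)|X|^{2\sigma-n}$ for small $|X|$, so the decay of $V$ allows me to choose $\delta=\delta(\epsilon)>0$ such that $\pm V\le \epsilon G$ on the whole sphere $\partial''\mathcal{B}_{\delta}^{+}$; on $\partial''\mathcal{B}_{\tau}^{+}$ the inequality is automatic from $V=0$ and the positivity of $G$ (Remark~\ref{rem:mp}). The function $w:=\pm V-\epsilon G$ is a weak solution of the homogeneous equation on the annular region $\mathcal{A}:=\mathcal{B}_{\tau}^{+}\setminus\overline{\mathcal{B}_{\delta}^{+}}$, satisfies the Neumann condition $\partial w/\partial\nu^{\sigma}=aw$ on $\partial'\mathcal{A}$, and is $\le 0$ on the two spherical parts of $\partial\mathcal{A}$. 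Testing against $w^{+}$ extended by zero to all of $\mathcal{B}_{\tau}^{+}$ and invoking \eqref{eq:coercive} absorbs the boundary term and forces $w^{+}\equiv 0$, so $\pm V\le \epsilon G$ on $\mathcal{A}$. Letting first $\delta\to 0$ and then $\epsilon\to 0$ gives $V\equiv 0$, whence $U=U^{*}\in W^{1,2}(t^{1-2\sigma},\mathcal{B}_{\tau}^{+})$. Combined with the standing information that $U\in W^{1,2}(t^{1-2\sigma},\mathcal{B}_{1}^{+}\setminus \mathcal{B}_{\rho}^{+})$ for every $\rho>0$, this yields $U\in W^{1,2}(t^{1-2\sigma},\mathcal{B}_{1}^{+})$; a cutoff approximation then shows $U$ is a weak solution across $0$, because $\int_{\mathcal{B}_{2\epsilon}^{+}\setminus\mathcal{B}_{\epsilon}^{+}}t^{1-2\sigma}|\nabla\eta_{\epsilon}|^{2}\sim \epsilon^{n-2\sigma}\to 0$ when $n>2\sigma$.

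The delicate step is the comparison, and it is not optional: the naive cutoff-energy bound applied directly to $V$ does not close, since $\int_{\{|X|\sim\epsilon\}}t^{1-2\sigma}V^{2}|\nabla\eta_{\epsilon}|^{2}\sim o(1)\,\epsilon^{2\sigma-n}$, which blows up. What rescues the argument is the \emph{pointwise} hypothesis $V=o(|X|^{2\sigma-n})$ on the full sphere $\partial''\mathcal{B}_{\delta}^{+}$, which is exactly what lets $w^{+}$ be extended by zero into $\mathcal{B}_{\delta}^{+}$ as a $W^{1,2}(t^{1-2\sigma},\mathcal{B}_{\tau}^{+})$ function vanishing on $\partial''\mathcal{B}_{\tau}^{+}$, so that the coercivity \eqref{eq:coercive} is applicable. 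The other moving part worth double-checking is that \eqref{eq:coercive} transfers to the annulus $\mathcal{A}$; this is immediate via the zero-extension just described.
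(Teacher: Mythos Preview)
Your proof is correct and follows essentially the same strategy as the paper: build a regular reference solution with the same outer boundary data, then compare $U$ against $\epsilon G$ plus the reference on shrinking annuli using the maximum principle afforded by the coercivity \eqref{eq:coercive}. The only substantive difference is that the paper uses a one-sided comparison (exploiting $U\ge 0$) to conclude merely that $U$ is bounded, and then runs a separate cutoff--energy argument with the Schauder bound $|X||\nabla_x U|+|X|^{2\sigma}|t^{1-2\sigma}\partial_t U|\le C$ to upgrade to $W^{1,2}$; your two-sided comparison gives $U=U^{*}$ directly and skips that second step. This is a mild streamlining, not a different idea---indeed the paper remarks right after the proof that the sign assumption on $U$ can be dropped, which is exactly what your two-sided version accomplishes. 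One cosmetic point: your phrase ``letting first $\delta\to 0$ and then $\epsilon\to 0$'' is slightly off since $\delta$ depends on $\epsilon$; the clean statement is that for each fixed $\epsilon$ the comparison holds on $\mathcal{B}_{\tau}^{+}\setminus\overline{\mathcal{B}_{\delta}^{+}}$ for every sufficiently small $\delta$, hence on $\mathcal{B}_{\tau}^{+}\setminus\{0\}$, and then one sends $\epsilon\to 0$.
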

\begin{proof} Let $\tau$ and $G$ be constructed in Proposition \ref{prop:greenexitence}. Let \begin{equation*}
\begin{cases}
\mathrm{div}(t^{1-2\sigma}\nabla \varphi )=0& \quad \mbox{in }\mathcal{B}_{\tau}^+,\\
\frac{\pa }{\pa \nu^\sigma} \varphi=a\varphi & \quad \mbox{on } \pa' \mathcal{B}_{\tau}^+,\\
\varphi= U& \quad \mbox{on }\pa'' \mathcal{B}_{\tau}^+.
\end{cases}
\end{equation*}
For any $\va>0$, let $\phi_\va=\va G+\varphi$. Since $U(X)=o(|X|^{2\sigma-n})$, one can find $\delta=\delta(\va)>0$ with $\lim\limits_{\va\to 0}\delta(\va)=0$ such that $\phi_\va>U$ on $\pa''\B_{\delta}$. By maximum principle (Remark \ref{rem:mp}), $U\le \phi_\va$ in $\B_\tau^+\setminus \B_{\delta}^+$. Sending $\va\to 0$, we have $U(X)\le \varphi(X)\le \max\limits_{\pa'' B_{\tau}^+} U$ for all $X\in \B_\tau^+\setminus \{0\}$.
Namely, $U\in L^\infty(\B_{1/2}^+)$.

Next, by the Schauder estimate for $U$, we have
\be\label{eq:refinebound}
|X||\nabla_x U(X)|+|X|^{2\sigma}|t^{1-2\sigma}\pa_t U|\le C.
\ee
For $\epsilon>0$, let $\eta_{\epsilon}$ be a cutoff function satisfying
\[\eta_{\epsilon}=
\begin{cases}
1\quad &\mbox{for} |X|\leq\epsilon,\\
0\quad  &\mbox{for} |X|\geq 2\epsilon,
\end{cases}
\quad |\nabla \eta_{\epsilon}|\leq\frac{C}{\epsilon},
\]
where $C>0$ depends only on $n$.  Using $U(1-\eta_\va)$ as a text function for the equation of $U$, we have
\[
0=-\int_{\mathcal{B}_1^+}t^{1-2\sigma}\nabla U\cdot\nabla\Big( U(1-\eta_{\epsilon})\Big)\,\ud X
+\int_{\partial'\mathcal{B}_1^+}a U^2(1-\eta_{\epsilon})\,\ud x.
\]
Since $U$ is bounded and \eqref{eq:refinebound}, we have
\begin{align*}
\int_{\mathcal{B}_1^+}t^{1-2\sigma}|\nabla U|^2(1-\eta_\va)\,\ud X &\le C+ \frac{C}{\va}\int_{\mathcal{B}_{2\va}^+\setminus \mathcal{B}_{\va}^+}t^{1-2\sigma}|\nabla U|\\&
\le C+ \frac{C}{\va}\va^{n+1-2\sigma}\le C.
\end{align*}
Sending $\va\to 0$, we have
\[
\int_{\mathcal{B}_1^+}t^{1-2\sigma}|\nabla U|^2\,\ud X\le C.
\]

In conclusion, we showed $U\in W^{1,2}(t^{1-2\sigma}, \mathcal{B}_1^+)\cap L^\infty( \mathcal{B}_1^+)$. The proposition follows immediately.

\end{proof}

From the proof of  Lemma \ref{lem:bocher1}, the condition $U\ge 0$ can be removed.

\begin{lem}\label{lemm:upasmp} Assume the assumptions in Proposition \ref{prop:bocher}. Then
$$A:=\overline{\lim\limits_{r\rightarrow0}}\max_{|X|=r}U(X)|X|^{n-2\sigma}<\infty.$$
\end{lem}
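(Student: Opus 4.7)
The plan is to argue by contradiction via comparison with the Green's function $G$ from Proposition \ref{prop:greenexitence}. Suppose $A=\infty$. Since $U$ is continuous on the closed half-sphere $\partial''\mathcal{B}_r^+$ for every small $r>0$, one can choose points $X_k\in \overline{\mathbb{R}^{n+1}_+}\setminus\{0\}$ with $r_k:=|X_k|\to 0$ and $U(X_k)\,r_k^{n-2\sigma}\to\infty$; in particular $U(X_k)\to\infty$ since $r_k^{n-2\sigma}\to 0$.

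First I would apply Proposition \ref{prop:harnack} (extended in the standard way from balls to the half-annulus via a finite chain of half-balls) to the rescaled function $\tilde U_k(Y):=U(r_k Y)$ on the fixed half-annulus $\{1/2\le |Y|\le 2\}^+$, noting that the Neumann coefficient $r_k^{2\sigma}a(r_k\cdot)$ is bounded uniformly in $k$. This produces a constant $c_0>0$, depending only on $n,\sigma,\|a\|_{L^\infty(B_1)}$, such that
\[
\min_{|X|=r_k,\,X\in \overline{\mathbb{R}^{n+1}_+}} U(X)\ \ge\ c_0\,U(X_k).
\]
On the other hand, by the asymptotic expansion \eqref{eq:green's expan1}--\eqref{eq:greens'expan2} there is a constant $C_1>0$ with $G(X)\le C_1|X|^{2\sigma-n}$ for $0<|X|\le \tau/2$.

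Now set
\[
\epsilon_k:=\frac{c_0\,U(X_k)\,r_k^{n-2\sigma}}{2C_1},
\]
so that $\epsilon_k\to\infty$ by assumption. I would then study
\[
W_k:=U-\epsilon_k G \qquad \text{in the annular domain } \Omega_k:=\mathcal{B}_\tau^+\setminus\overline{\mathcal{B}_{r_k}^+}.
\]
Since $U$ and $G$ both satisfy $\mathrm{div}(t^{1-2\sigma}\nabla\,\cdot\,)=0$ together with the same Neumann condition $\partial(\cdot)/\partial\nu^\sigma=a(\cdot)$ on $\partial'\mathcal{B}_\tau^+$, so does $W_k$. On $\partial''\mathcal{B}_\tau^+$ one has $G=0$, hence $W_k=U\ge 0$. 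On $\partial''\mathcal{B}_{r_k}^+$ the Harnack lower bound combined with the choice of $\epsilon_k$ gives
\[
W_k\ \ge\ c_0\,U(X_k)-\epsilon_k C_1 r_k^{2\sigma-n}\ =\ \frac{c_0\,U(X_k)}{2}\ >\ 0
\]
for all large $k$.

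The last step is to invoke the weak maximum principle for $W_k$ on $\Omega_k$. This is the only delicate point: $\Omega_k$ is a proper subdomain of $\mathcal{B}_\tau^+$, but testing the weak formulation against the extension of $W_k^-$ by zero into $\mathcal{B}_{r_k}^+$ (which lies in $W^{1,2}(t^{1-2\sigma},\mathcal{B}_\tau^+)$ and vanishes on $\partial''\mathcal{B}_\tau^+$) and using the coercivity inequality \eqref{eq:coercive} behind Remark \ref{rem:mp} yields $W_k^-\equiv 0$. Thus $U\ge \epsilon_k G$ throughout $\Omega_k$. Fixing any point $Y_0\in \mathbb{R}^{n+1}_+$ with $|Y_0|=\tau/2$, for $k$ large enough $Y_0\in\Omega_k$ and $G(Y_0)>0$ is a fixed positive constant, so $U(Y_0)\ge \epsilon_k G(Y_0)\to\infty$, contradicting the finiteness of $U(Y_0)$. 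The only real (and minor) obstacle is the verification that the maximum principle passes to the annulus, which is entirely standard thanks to the coercivity built into the construction of $G$.
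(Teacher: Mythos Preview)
Your proof is correct and follows essentially the same approach as the paper's own argument: assume $A=\infty$, use the Harnack inequality on half-annuli to control $U$ on $\partial''\mathcal{B}_{r_k}^+$, compare with arbitrarily large multiples of the Green's function $G$ via the maximum principle (Remark \ref{rem:mp}), and derive a contradiction at a fixed interior point. The paper's proof is simply a condensed version of what you wrote, stating directly that $U\ge kG$ for all $k>0$ without spelling out the choice of $\epsilon_k$ or the annulus argument.
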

\begin{proof}
By Harnack inequality (Proposition \ref{prop:harnack}), for $0<r<1$ we have
$$\max_{\pa''\mathcal{B}_r^+}U(X)\leq C\min_{\pa''\mathcal{B}_r^+}U(X),$$
where $C(r)>0$ depends only on $n,\sigma$ and $\|a\|_{L^\infty(B_1)}$.
Let $\tau>0$ and $G(X)$ as in Proposition \ref{prop:greenexitence}. If $A=\infty$, using maximum principle (see Remark \ref{rem:mp}) we have
\[
U(X)\ge k G(X) \quad  \mbox{for all }k>0.
\]
This is impossible. Therefore, the lemma is proved.
\end{proof}

\begin{proof}[Proof of Proposition \ref{prop:bocher}] Let $\tau>0$ and $G(X)$ be as in Proposition \ref{prop:greenexitence}.
Set
$$
\bar A=\sup\{\lambda\geq 0| \lambda G(X)\leq U(X)~\forall~X\in \mathcal{B}_{\tau}^+\setminus\{0\}\}.
$$
It follows from Lemma \ref{lemm:upasmp} that $0\leq \bar A\leq A  <\infty$.

Case 1: $\bar A=0$.

We claim that  for any $\epsilon>0$, there exists $r_{\epsilon}\in(0,\tau)$ such that
$$ \min_{|X|=r}\{U(X)-\epsilon G(X)\}\leq0 \quad \forall~ 0< r<r_{\epsilon}.$$
If the above claim were false, then there would exist some $\epsilon_{0}>0$ and $r_{j}\rightarrow0^{+}$ such that
$$ \min_{|X|=r_{j}}\{U(X)-\epsilon_{0} G(X)\}>0.$$
Notice that $U(X)-\epsilon_{0} G(X)\geq 0 $ for $|X|=\tau$. We derive from the maximum principle that $U(X)-\epsilon_{0} G(X)\geq 0$ on $\mathcal{B}_{\tau}^+\setminus\mathcal{B}_{r_{j}}^+$. It follows that $U(X)-\epsilon_{0} G(X)\geq 0$ on $\mathcal{B}_{\tau}^+\setminus \{0\}$ which implies that $
\bar{A}\geq\epsilon_{0}>0$, a contradiction.\par
Therefore, for any $\epsilon>0$, and $0< r<r_{\epsilon}$, there exists $X_{\epsilon}$ with $|X_{\epsilon}|=r$ such that $U(X_{\epsilon})\leq\epsilon G(X_{\epsilon})$. By Harnack inequality, we have
$$ \max_{|X|=r}U(X)\leq CU(X_{\epsilon})\leq C\epsilon G(X_{\epsilon}).$$
It follows that
$$ U(X)=o(|X|^{2\sigma-n})~as~|X|\rightarrow0~.$$ By Lemma \ref{lem:bocher1}, the singularity is removable.

Case 2: $\bar A>0.$
We consider $H(X)=U(X)-\bar A G(X)$. From the definition of $\bar A$, we know that $H(X)\geq0$. By the maximum principle, we know that either $H(X)=0$  or $H(X)>0$ in $\mathcal{B}_{\tau}^+\setminus \{0\}$. In the former case we are done.

In the latter case, $H(X)$ satisfies \eqref{eq:toprove-2} with $\mathcal{B}_1^+$ replaced by $\mathcal{B}_\tau^+$.
Set
$$ b=\sup\{\lambda\geq 0| \lambda G(X)\leq H(X)~\forall~X\in \mathcal{B}_{\tau}^+\setminus\{0\}\}.$$
Arguing as in case 1, wee have $b=0$ and $H(X)=o(|X|^{2\sigma-n})$. By Lemma \ref{lem:bocher1}, $H\in W^{1,2}(t^{1-2\sigma}, \mathcal{B}_{\tau}^+)$. We are done again.

Therefore, the proposition is proved.
\end{proof}

\section{Analysis of isolated blow up points}
\label{sect:analysis}

In this section, we follow Jin-Li-Xiong \cite{JLX}, but several new ingredients are needed to deal with the linear term. For example, a conformal type transform will be used to show the sharp upper bound of blow up solutions; see Lemma \ref{lem:toup}.

Let $\tau_i\geq 0$ satisfy
$\lim\limits_{i\rightarrow \infty}\tau_i=0$, $p_i=(n+2\sigma)/(n-2\sigma)-\tau_i$, and $a_i\ge 0$ be a sequence of functions converging to $a$ in $C^{2}(B_3)$, and $\{u_{i}\}$ be a sequence of  $C^2(B_3)\cap \mathcal{L}_\sigma(\R^n)$ solutions of
\be\label{eq:mainseq}
(-\Delta)^{\sigma}u_i=a_{i}(x)u_{i}+u_i^{p_i} \quad \mbox{in } B_3, \quad u_i\ge 0 \quad \mbox{in }\R^n.
\ee
Let $U_i=\mathcal P_{\sigma}*u_{i}$ be the extension of $u_i$ as in \eqref{eq:poisson}. Then we have
\be\label{eq:mainseq-ext}
\begin{cases}
\mathrm{div}(t^{1-2\sigma}\nabla U_i)=0,&\quad \mbox{in } \R^{n+1}_+,\\
\frac{\pa U_i(x,0) }{\pa \nu^\sigma}=a_i(x)U_{i}(x,0)+U_i(x,0)^{p_i},&\quad  x\in B_3,
\end{cases}
\ee
where we dropped the harmless constant $N(\sigma)$ for brevity.

A point $\bar{y}\in B_2$ is called a blowup point of $\{u_{i}\}$ if $u_{i}(y_{i})\rightarrow\infty$ for some $y_{i}\rightarrow\bar{y}$.

\begin{defn}\label{def:iso}
Let $\{u_i\}$ satisfy \eqref{eq:mainseq}. We say a point $\bar  y\in B_2$ is an isolated blow up point of $\{u_i\}$ if there exist
$0<\bar  r<\mbox{dist}(\bar  y,\partial B_3)$, a constant $\tilde{C} >0$, and a sequence $y_i$ tending to $\bar  y$, such that,
$y_i$ is a local maximum of $u_i$, $u_i(y_i)\rightarrow \infty$ and
\[
u_i(y)\leq \tilde{C} |y-y_i|^{-2\sigma/(p_i-1)} \quad \mbox{for all } y\in B_{\bar  r}(y_i).
\]
\end{defn}

Let $y_i\rightarrow \bar  y$ be an isolated blow up point of $u_i$, define for $0< r<\bar{r}$,
\be\label{def:average}
\bar  u_i(r)=\frac{1}{|\pa B_r(y_{i})|} \int_{\pa B_r(y_i)}u_i\quad \mbox{and} \quad \bar  w_i(r)=r^{2\sigma/(p_i-1)}\bar  u_i(r).
\ee

\begin{defn}\label{def:isosim}
We say $y_i \to \bar  y\in B_2$ is an isolated simple blow up point, if $y_i \to \bar  y$ is an isolated blow up point, such that, for some
$\rho>0$ (independent of $i$) $\bar  w_i$ has precisely one critical point in $(0,\rho)$ for large $i$.
\end{defn}

In the above, we use $B_2$ and $B_3$ for conveniences. One can replace them by open sets.

\begin{lem}\label{lem:s-harnack} Suppose that $u_i$
is a sequence of solutions of \eqref{eq:mainseq}, and $y_i\to 0$ is an isolated blow up point of $\{u_i\}$, i.e., for some positive constants $A_1$ and $\bar r$ independent of $i$,
\be\label{eq:iso}
|y-y_i|^{2\sigma/(p_i-1)}u_i(y)\leq A_1,\quad \mbox{for all } y\in B_{\bar r}(y_{i})\subset B_3.
\ee
Then for any $0<r<\frac13 \overline r$, we have the following Harnack inequality
\[
\sup_{\mathcal{B}^+_{2r}(Y_i)\setminus\overline{\mathcal{B}^+_{r/2}(Y_i)}} U_i\leq C \inf_{\mathcal{B}^+_{2r}(Y_i)\setminus\overline{\mathcal{B}^+_{r/2}(Y_i)}} U_i,
\]
where $Y_i=(y_i,0)$ and $C>0$ depends only on $n, \sigma, A_1, \bar r$ and $\dsup_i\|a_i\|_{L^\infty(B_{\overline r}(y_i))}$.
\end{lem}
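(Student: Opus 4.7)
The plan is to rescale around $Y_i$ so that the annulus $\mathcal{B}_{2r}^+(Y_i)\setminus\overline{\mathcal{B}_{r/2}^+(Y_i)}$ is mapped to a fixed annulus, reduce the problem to a linear equation with uniformly bounded coefficients, and then chain the Harnack inequality of Proposition \ref{prop:harnack} along a finite cover.

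First I would set $V_i(X)=r^{2\sigma/(p_i-1)}U_i(Y_i+rX)$ for $X\in\overline{\mathcal{B}_3^+}\setminus\mathcal{B}_{1/4}^+$ (legal provided $3r<\overline r$, so $r<\overline r/3$ suffices). A direct computation using the homogeneity of the degenerate operator and the identity $\frac{2\sigma}{p_i-1}+2\sigma-p_i\frac{2\sigma}{p_i-1}=0$ shows that $V_i$ solves
\[
\mathrm{div}(t^{1-2\sigma}\nabla V_i)=0\quad \text{in } \mathcal{B}_3^+\setminus\overline{\mathcal{B}_{1/4}^+},\qquad
\frac{\pa V_i}{\pa\nu^\sigma}=r^{2\sigma}a_i(y_i+rx)V_i+V_i^{p_i}\quad \text{on }\pa'\mathcal{B}_3^+\setminus\overline{\pa'\mathcal{B}_{1/4}^+}.
\]
I would rewrite the boundary condition as $\frac{\pa V_i}{\pa\nu^\sigma}=\tilde a_i(x)V_i$ with $\tilde a_i(x):=r^{2\sigma}a_i(y_i+rx)+V_i(x,0)^{p_i-1}$.

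Next I would use the isolated blow-up hypothesis to control $\tilde a_i$ uniformly. For $1/4\le|x|\le 3$, the bound \eqref{eq:iso} applied at the point $y_i+rx\in B_{\overline r}(y_i)$ yields $V_i(x,0)\le A_1|x|^{-2\sigma/(p_i-1)}$, so $V_i(x,0)^{p_i-1}\le A_1^{p_i-1}|x|^{-2\sigma}\le C(A_1,n,\sigma)$ on the closed annulus. Since $r^{2\sigma}\|a_i\|_{L^\infty}$ is bounded by $(\overline r/3)^{2\sigma}\sup_i\|a_i\|_{L^\infty(B_{\overline r}(y_i))}$, the coefficient $\tilde a_i$ is bounded in $L^\infty(\pa'\mathcal{B}_3^+\setminus\pa'\mathcal{B}_{1/4}^+)$ uniformly in $i$.

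With a uniform $L^\infty$ bound on $\tilde a_i$, I would cover the closed annulus $\overline{\mathcal{B}_2^+}\setminus\mathcal{B}_{1/2}^+$ by finitely many balls of two types: interior balls $\mathcal{B}_{\rho}(X_0)$ with $X_0$ strictly away from $\{t=0\}$, on which the De Giorgi–Moser theory for the linear divergence-form operator $\mathrm{div}(t^{1-2\sigma}\nabla\cdot)$ with $A_2$-weight $t^{1-2\sigma}$ gives the standard Harnack inequality, and boundary half-balls $\mathcal{B}^+_{\rho}(X_0)$ with $X_0\in\pa'\mathcal{B}_3^+$, on which Proposition \ref{prop:harnack} applies (note $p=\infty>n/(2\sigma)$). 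Since $V_i>0$ (strong maximum principle, together with the Poisson representation), chaining the finitely many Harnack inequalities along any two points in the annulus yields
\[
\sup_{\overline{\mathcal{B}_2^+}\setminus\mathcal{B}_{1/2}^+} V_i \;\le\; C\inf_{\overline{\mathcal{B}_2^+}\setminus\mathcal{B}_{1/2}^+} V_i,
\]
with $C$ depending only on $n,\sigma,A_1,\overline r,\sup_i\|a_i\|_{L^\infty}$ and the geometry of the cover. Unwinding the scaling proves the claim. The one delicate point is making sure the cover and the constants from the two Harnack inequalities can be chosen independently of $i$ and of $r$; this is where keeping the coefficient bound uniform, as produced in step three, is essential.
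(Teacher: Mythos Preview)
Your proof is correct and follows essentially the same approach as the paper: the paper's proof consists of the single sentence ``It follows from applying Proposition~\ref{prop:harnack} to $r^{\frac{2\sigma}{p_i-1}} U_i(r X+Y_i)$,'' referring to \cite{JLX} for details, and you have supplied precisely those details (the rescaling, the uniform bound on the linearized coefficient via the isolated blow-up hypothesis, and the chaining argument over a finite cover of the fixed annulus).
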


\begin{proof} It follows from applying  Proposition \ref{prop:harnack} to $ r^{\frac{2\sigma}{p_i-1}} U_i(r X+Y_i)$. See the proof of Lemma 4.3 of \cite{JLX} for more details.
\end{proof}

Without loss of generality, we assume $\bar r=2$ to the end of the section.

\begin{prop}\label{prop:blowupbubble} Assume as  in Lemma \ref{lem:s-harnack}. Suppose that $\|a_i\|_{C^{2}(B_3)}\le A_0$.
Then for any $R_i\rightarrow \infty$, $\va_i\rightarrow 0^+$, we have,
after passing to a subsequence (still denoted as $\{u_i\}$,
$\{y_i\}$, etc. ...), that

\begin{equation}\label{limit1}
\|m_i^{-1}u_i(m_i^{-(p_i-1)/2\sigma}\cdot+y_i)-\bar{c}(1+|\cdot|^2)^{(2\sigma-n)/2}\|_{C^2(B_{2R_i}(0))}\leq \va_i,
\end{equation}

\be\label{limit2}
R_i m_{i}^{-\frac{p_i-1}{2\sigma}}\rightarrow 0\quad \mbox{as}\quad i\rightarrow \infty,
\ee
where $m_i=u_i(y_i)$ and $\bar c$ depends only on $n$ and $\sigma$.
\end{prop}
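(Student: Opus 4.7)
\medskip

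\noindent\textbf{Proof proposal for Proposition \ref{prop:blowupbubble}.}

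The plan is to follow the standard blow-up-rescaling strategy adapted to the extended problem \eqref{eq:mainseq-ext}, and to verify that the additional linear term $a_i u_i$ becomes negligible under the critical rescaling. Set $\lambda_i = m_i^{-(p_i-1)/(2\sigma)}$ (so $\lambda_i \to 0$) and define the rescaled extensions
\[
\tilde U_i(X) = m_i^{-1} U_i(\lambda_i X + Y_i), \qquad \tilde u_i(x) = \tilde U_i(x,0).
\]
A direct computation using $\lambda_i^{2\sigma} = m_i^{-(p_i-1)}$ shows that $\tilde U_i$ satisfies
\[
\begin{cases}
\mathrm{div}(t^{1-2\sigma}\nabla \tilde U_i) = 0 & \mbox{in } \R^{n+1}_+,\\
\dfrac{\pa \tilde U_i(x,0)}{\pa \nu^\sigma} = \tilde a_i(x)\, \tilde u_i(x) + \tilde u_i(x)^{p_i} & \mbox{on } B_{\bar r/\lambda_i},
\end{cases}
\]
where $\tilde a_i(x) = \lambda_i^{2\sigma} a_i(\lambda_i x + y_i)$. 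Since $\|a_i\|_{L^\infty}\le A_0$ and $\lambda_i^{2\sigma}\to 0$, the coefficient $\tilde a_i$ converges to zero uniformly on every compact set, so the linear term is harmless in the limit.

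Next I would collect uniform estimates on $\tilde U_i$ on bounded sets $\B_R^+$. The isolated blow-up bound \eqref{eq:iso} rescales to $|x|^{2\sigma/(p_i-1)} \tilde u_i(x) \le A_1$ on $B_{\bar r/\lambda_i}$, and $\tilde u_i(0) = 1$ is a local maximum. Combined with the Poisson representation, this yields a uniform $L^\infty$ bound for $\tilde U_i$ on each $\B_R^+$ (for instance by applying Lemma \ref{lem:s-harnack} at scale $r=\lambda_i$ after centering at $Y_i$ and then iterating). Standard H\"older and Schauder regularity for the degenerate operator $\mathrm{div}(t^{1-2\sigma}\nabla\cdot)$ with oblique-type boundary data (as in Cabr\'e–Sire and Jin–Li–Xiong \cite{JLX}) then give uniform $C^\alpha_{\mathrm{loc}}$ control on $\tilde U_i$ and $C^2_{\mathrm{loc}}$ control on $\tilde u_i$. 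Passing to a subsequence, $\tilde U_i \to \tilde U$ in $C^0_{\mathrm{loc}}(\overline{\R^{n+1}_+})$, with $\tilde u_i \to \tilde u$ in $C^2_{\mathrm{loc}}(\R^n)$, where $\tilde U$ satisfies
\[
\mathrm{div}(t^{1-2\sigma}\nabla \tilde U) = 0 \ \ \mbox{in }\R^{n+1}_+, \qquad
\frac{\pa \tilde U}{\pa \nu^\sigma} = \tilde u^{(n+2\sigma)/(n-2\sigma)} \ \ \mbox{on } \R^n,
\]
with $\tilde u \ge 0$, $\tilde u(0)=1$ a maximum, and $|x|^{(n-2\sigma)/2}\tilde u(x)\le A_1^{(n-2\sigma)/(2\sigma)}$.

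At this point I would invoke the classification of nonnegative entire solutions to the critical equation $(-\Delta)^\sigma \tilde u = \tilde u^{(n+2\sigma)/(n-2\sigma)}$ on $\R^n$ (Chen–Li–Ou, Jin–Li–Xiong) to conclude $\tilde u(x) = \bar c (1+|x|^2)^{(2\sigma-n)/2}$ for the specific constant $\bar c = \bar c(n,\sigma)$ determined by $\tilde u(0)=1$. A standard diagonal argument then produces sequences $R_i\to\infty$ and $\va_i\to 0^+$ realizing \eqref{limit1}; relation \eqref{limit2} follows because $R_i$ can be chosen to grow so slowly that $R_i \lambda_i \to 0$ while keeping $B_{R_i}$ inside the domain of validity of the isolated blow-up estimate.

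The main technical point is not the convergence itself but ensuring that the extension $\tilde U_i$, not just its trace $\tilde u_i$, converges on every $\B_R^+$ uniformly. This is where the absence of a weighted $W^{1,p}$ theory for $p>2$ forces us to rely on $L^\infty$–$C^\alpha$ bounds via Moser/De Giorgi-type arguments for $t^{1-2\sigma}$-weighted operators, together with the Poisson representation to control $\tilde U_i$ through its trace. Once this is set up, the vanishing of $\tilde a_i$ makes the passage to the limit identical to the purely critical case treated in \cite{JLX}, and the rest is routine.
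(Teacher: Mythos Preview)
Your proposal is correct and follows exactly the approach of Proposition~4.4 in \cite{JLX}, to which the paper's proof simply defers: rescale by $\lambda_i=m_i^{-(p_i-1)/2\sigma}$, observe that the linear coefficient $\tilde a_i=\lambda_i^{2\sigma}a_i(\lambda_i\cdot+y_i)\to 0$ uniformly so the extra term is harmless, use the isolated blow-up bound together with the Harnack inequality to get uniform local control of $\tilde U_i$, pass to a $C^2_{\mathrm{loc}}$ limit, and classify the limit via the Liouville theorem for the critical extension problem. Your sketch in fact supplies the details the paper omits; the only cosmetic slip is the exponent on $A_1$ in the decay bound for $\tilde u$ (it should simply be $A_1$), and your last paragraph slightly overstates the difficulty of controlling $\tilde U_i$ in the bulk, since the Poisson representation plus the trace bound already does this directly.
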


\begin{proof}
See the proof of Proposition 4.4 of \cite{JLX}.
\end{proof}

In the sequel, we will always work on the sequences $R_i\to \infty$ and $\va_i\to 0$ which ensure \eqref{limit1} and \eqref{limit2} valid.

\begin{prop}\label{prop:low-1} Under the hypotheses of Proposition \ref{prop:blowupbubble}, there exists a positive constant $C=C(n,\sigma, A_0, A_1)$ such that,
\[
u_i(y)\geq C^{-1}m_i(1+\bar c m_i^{(p_i-1)/\sigma}|y-y_i|^2)^{(2\sigma-n)/2}, \quad |y-y_i|\leq 1.
\]
In particular, for any $e\in \mathbb{R}^n$, $|e|=1$, we have
\[
u_i(y_i+e)\geq C^{-1}m_i^{-1+((n-2\sigma)/2\sigma)\tau_i},
\]
where $\tau_i=(n+2\sigma)/(n-2\sigma)-p_i$.
\end{prop}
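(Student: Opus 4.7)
The plan is to split the ball $\{|y-y_i|\le 1\}$ at the natural blow-up scale $r_i^* := R_i m_i^{-(p_i-1)/(2\sigma)}$. In the inner region $|y-y_i|\le r_i^*$, estimate \eqref{limit1} of Proposition~\ref{prop:blowupbubble} gives that $m_i^{-1}u_i(m_i^{-(p_i-1)/(2\sigma)}x+y_i)$ is within $\varepsilon_i$ in $C^2(B_{2R_i})$ of the strictly positive bubble $\bar c(1+|x|^2)^{(2\sigma-n)/2}$, so the asserted lower bound on $u_i$ follows after unscaling.

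For the outer region $r_i^*\le |y-y_i|\le 1$ I would pass to the Caffarelli--Silvestre extension $U_i:=\mathcal P_\sigma*u_i$, which by \eqref{eq:mainseq-ext} solves the degenerate equation in $\R^{n+1}_+$ with nonnegative Neumann trace $a_iu_i+u_i^{p_i}\ge 0$, and compare $U_i$ on the half-annulus $\mathcal A_i:=\mathcal B_2^+(Y_i)\setminus \overline{\mathcal B_{r_i^*}^+(Y_i)}$ (with $Y_i=(y_i,0)$) against
\[
\psi(X):=|X-Y_i|^{2\sigma-n}-2^{2\sigma-n}.
\]
By \eqref{eq:flat-neumann}, $\psi$ is a solution of $\mathrm{div}(t^{1-2\sigma}\nabla\psi)=0$ on $\R^{n+1}_+\setminus\{Y_i\}$ with $\partial\psi/\partial\nu^\sigma\equiv 0$ on $\partial'\mathcal A_i$; moreover $\psi\equiv 0$ on $\partial''\mathcal B_2^+(Y_i)$ and $\psi\ge c\,(r_i^*)^{2\sigma-n}$ on $\partial''\mathcal B_{r_i^*}^+(Y_i)$. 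Promoting \eqref{limit1} to the rescaled extension (whose $C^2_{\rm loc}$ convergence to the extended bubble follows from the degenerate Schauder theory used in \cite{JLX}) yields $U_i\ge C^{-1}m_iR_i^{2\sigma-n}$ on $\partial''\mathcal B_{r_i^*}^+(Y_i)$. Choosing
\[
\alpha_i := c_0\,m_iR_i^{2\sigma-n}(r_i^*)^{n-2\sigma} = c_0\,m_i^{-1+\tau_i(n-2\sigma)/(2\sigma)}
\]
for a sufficiently small $c_0>0$, the function $W_i := U_i-\alpha_i\psi$ satisfies the degenerate equation in $\mathcal A_i$, has $\partial W_i/\partial\nu^\sigma\ge 0$ on $\partial'\mathcal A_i$, and $W_i\ge 0$ on $\partial''\mathcal A_i$. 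The degenerate maximum principle (compare Remark~\ref{rem:mp}) then yields $W_i\ge 0$ in $\mathcal A_i$; restricting to $t=0$ gives
\[
u_i(y)\ge \alpha_i\bigl(|y-y_i|^{2\sigma-n}-2^{2\sigma-n}\bigr)\ge C^{-1}\alpha_i|y-y_i|^{2\sigma-n}\quad \text{for } r_i^*\le|y-y_i|\le 1,
\]
which is exactly the outer asymptotic of the target profile $m_i(1+\bar c m_i^{(p_i-1)/\sigma}|y-y_i|^2)^{(2\sigma-n)/2}$.

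Combining the two regions gives the main inequality. The second inequality is its specialization at $|y-y_i|=1$, using the algebraic identity $(r_i^*)^{n-2\sigma}=R_i^{n-2\sigma}m_i^{2-\tau_i(n-2\sigma)/(2\sigma)}$. The main obstacle is the correct design of $\psi$ in the degenerate extension: one must verify the vanishing Neumann trace of $\psi$ off the singularity, track the powers of $m_i$ and $R_i$ through the rescaling so that $\alpha_i$ comes out to $m_i^{-1+\tau_i(n-2\sigma)/(2\sigma)}$, and apply the degenerate maximum principle with nonnegative oblique data. The linear potential term $a_iu_i$ is harmless because $a_i\ge 0$ only makes $\partial U_i/\partial\nu^\sigma$ larger, which helps the comparison.
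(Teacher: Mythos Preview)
Your comparison argument in the extension is precisely the approach of Proposition~4.5 in \cite{JLX}, to which the paper defers; the observation that $a_i\ge 0$ keeps the Neumann data $a_iu_i+u_i^{p_i}$ nonnegative is exactly why that argument carries over unchanged here.

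One small patch: $C^2_{\rm loc}$ convergence of the rescaled extension does not by itself control $U_i$ on $\partial''\mathcal B_{r_i^*}^+(Y_i)$, since this half-sphere sits at rescaled radius $R_i\to\infty$. The clean way is to note that \eqref{limit1} already gives $u_i(y)\ge C^{-1}m_iR_i^{2\sigma-n}$ on the trace circle $|y-y_i|=r_i^*$, and then the Harnack inequality of Lemma~\ref{lem:s-harnack} applied with $r=r_i^*$ propagates this lower bound to the full half-annulus, in particular to $\partial''\mathcal B_{r_i^*}^+(Y_i)$. Also, the maximum principle you invoke on the half-annulus with a sign condition on the oblique data is Lemma~A.3 of \cite{JLX} (the same one used in the proof of Lemma~\ref{lem:up-1}), rather than Remark~\ref{rem:mp}, which concerns coercivity in a small ball.
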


\begin{proof} Since $a_i\ge 0$, the proof is the same as that of Proposition 4.5 of \cite{JLX}.
\end{proof}

\begin{lem}\label{lem:up-1} In addition to the hypotheses of Proposition \ref{prop:blowupbubble}, suppose further that $y_i\to 0$ is an isolated simple blow up point of $\{u_i\}$ with a constant $\rho>0$. Assume $R_{i}\rightarrow\infty$ and $\va_i\rightarrow 0^{+}$ are sequences with which \eqref{limit1} and \eqref{limit2} hold. Then for any $0<\delta<<(n-2\sigma)/2$, we have
\[
u_i(y)\leq C u_i(y_i)^{-\lda_i}|y-y_i|^{2\sigma-n+\delta},\quad \mbox{for all }r_i\leq |y-y_i|\leq 1,
\]
where $\lda_i=(n-2\sigma-\delta)(p_i-1)/2\sigma-1$ and $C>0$ depends only on $n,\sigma, A_0, A_1$ and $\delta$.
\end{lem}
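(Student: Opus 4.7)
The plan is to adapt the maximum-principle barrier argument of Jin--Li--Xiong \cite{JLX}, working with the extension $U_i$ on the half-annulus $\mathcal{A}_i := \mathcal{B}_1^+(Y_i) \setminus \mathcal{B}_{r_i}^+(Y_i)$, where $r_i := R_i m_i^{-(p_i-1)/(2\sigma)}$ is the bubble scale from Proposition \ref{prop:blowupbubble}. For any $\gamma \in (2\sigma - n, 0)$, a direct computation gives $\mathrm{div}(t^{1-2\sigma}\nabla |X|^\gamma) = \gamma(\gamma + n - 2\sigma) t^{1-2\sigma}|X|^{\gamma-2} \le 0$ in $\mathbb{R}^{n+1}_+ \setminus \{0\}$, with vanishing conormal derivative $\partial/\partial\nu^\sigma$ on $\{t = 0\}\setminus\{0\}$. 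Thus both $|X|^{2\sigma - n + \delta}$ and $|X|^{-\delta}$ are admissible supersolutions of the homogeneous degenerate-elliptic Neumann problem, and I would take as barrier $\Phi_i(X) := C_0 m_i^{-\lambda_i} |X - Y_i|^{2\sigma - n + \delta} + C_1 m_i^{-\lambda_i} |X - Y_i|^{-\delta}$ on $\mathcal{A}_i$, with $C_0, C_1$ to be fixed by checking the two boundaries.

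On the inner sphere $|X - Y_i| = r_i$, Proposition \ref{prop:blowupbubble} gives $U_i \le C m_i R_i^{2\sigma - n}$; the exponent $\lambda_i = (n - 2\sigma - \delta)(p_i-1)/(2\sigma) - 1$ is designed so that $m_i^{-\lambda_i} r_i^{2\sigma - n + \delta} = m_i R_i^{2\sigma - n + \delta}$, so the first term of $\Phi_i$ dominates $U_i$ by a factor $R_i^\delta \to \infty$. On the outer sphere $|X - Y_i| = 1$, the isolated simple hypothesis forces $\bar w_i$ to be monotone past its unique critical point, which sits at the bubble scale, so $\bar w_i(1)$ is comparable to $\bar w_i(r_i) = o(m_i^{-\lambda_i})$; together with Lemma \ref{lem:s-harnack} this yields a pointwise bound on $U_i$ that $\Phi_i$ dominates. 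The main new difficulty relative to \cite{JLX} is the linear Neumann term $a_i u_i$. I would neutralize it by rescaling: for each dyadic $r \in [r_i, 1]$ set $v_r(X) := r^{2\sigma/(p_i-1)} U_i(Y_i + rX)$ on the half-annulus $\{1/2 \le |X| \le 2\} \cap \mathbb{R}^{n+1}_+$. Monotonicity of $\bar w_i$ combined with Lemma \ref{lem:s-harnack} yields $\|v_r\|_{L^\infty} = o(1)$, hence $v_r^{p_i - 1}(x,0) = o(1)$; since $r^{2\sigma} a_i(y_i + rx)$ is uniformly bounded, the Neumann condition on $v_r$ takes the linear form $\partial v_r/\partial \nu^\sigma = b_{i,r}(x) v_r$ with $\|b_{i,r}\|_{L^\infty}$ small. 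The weak maximum principle for the degenerate Neumann problem then gives $v_r \le $ (rescaled $\Phi_i$) on each shell, and undoing the scaling plus restricting to $t = 0$ yields the lemma.

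The step I expect to be most delicate is the outer-boundary matching: translating the average-monotonicity coming from the isolated simple definition into an honest pointwise bound at $|X - Y_i| = 1$ that matches the target exponent $m_i^{-\lambda_i}$. In practice this forces one to first establish the bound on $\mathcal{B}_\rho^+(Y_i)\setminus\mathcal{B}_{r_i}^+(Y_i)$ with $\rho$ small enough to make the bilinear form $\int t^{1-2\sigma}|\nabla \varphi|^2 - \int b_{i,r}\varphi^2$ coercive (in the spirit of \eqref{eq:coercive}), and then to propagate the estimate out to $|X - Y_i| = 1$ by a Harnack chain, which is standard away from the blow-up. A secondary subtlety is that the smallness of $v_r^{p_i-1}$ degrades as $r \to r_i$, so one must iterate dyadically from $r = 1$ inward, absorbing at each scale the nonlinear Neumann contribution into the superharmonic excess of the barrier; the second term $|X - Y_i|^{-\delta}$ in $\Phi_i$ provides precisely the slack needed for this absorption at the innermost scales.
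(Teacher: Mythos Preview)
Your barrier $\Phi_i = C_0 m_i^{-\lambda_i}|X-Y_i|^{2\sigma-n+\delta} + C_1 m_i^{-\lambda_i}|X-Y_i|^{-\delta}$ is not a supersolution for the comparison you need. The comparison principle (Lemma~A.3 of \cite{JLX}, which the paper invokes) requires on the flat boundary that $L_i\Phi_i := \partial_{\nu^\sigma}\Phi_i - (a_i + u_i^{p_i-1})\Phi_i \ge 0$. But every pure power $|X|^\gamma$ has $\partial_{\nu^\sigma}|X|^\gamma = 0$, so $L_i\Phi_i = -(a_i + u_i^{p_i-1})\Phi_i \le 0$, strictly wherever $a_i>0$. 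Your dyadic rescaling does not rescue this: after rescaling, the boundary coefficient $b_{i,r} = r^{2\sigma}a_i(y_i+r\,\cdot) + v_r^{p_i-1}$ may be small, but it is still \emph{nonnegative}, and your barrier still has the wrong $L_i$-sign. The strict interior inequality $\mathfrak{L}|X|^\gamma<0$ does not transfer to the flat boundary in any standard maximum principle; the ``absorption'' you allude to is not a mechanism that exists here. The paper fixes exactly this point by adding a correction: it uses $|Y-Y_i|^{-\mu} - \varepsilon\, t^{2\sigma}|Y-Y_i|^{-(\mu+2\sigma)}$ for $\mu=\delta$ and $\mu=n-2\sigma-\delta$. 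Since $\partial_{\nu^\sigma}\big(-\varepsilon\, t^{2\sigma}|Y|^{-(\mu+2\sigma)}\big)=2\varepsilon\sigma|Y|^{-(\mu+2\sigma)}>0$, this manufactures enough positive Neumann flux to dominate $(a_i+u_i^{p_i-1})|Y-Y_i|^{-\mu}$ once one restricts to $|Y-Y_i|\le \rho_1$ with $\rho_1$ small (so $a_i|Y-Y_i|^{2\sigma}\le A_0\rho_1^{2\sigma}<\varepsilon\sigma$) and uses the a~priori smallness $u_i^{p_i-1}|Y-Y_i|^{2\sigma}=O(R_i^{-2\sigma+o(1)})$ coming from monotonicity. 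This is precisely why $\delta$ must be a fixed positive number here rather than a sequence $\delta_i\to 0$ as in \cite{JLX}: $\varepsilon$ is dictated by $\delta$ (to keep $\mathfrak{L}$ of the corrected barrier $\le 0$), and then $\rho_1$ by $\varepsilon$ and $A_0$.

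There is a second, smaller, gap at the outer boundary. Monotonicity of $\bar w_i$ only gives $\bar u_i(\rho_1)\le C\rho_1^{-2\sigma/(p_i-1)}R_i^{(2\sigma-n)/2+o(1)}$, which is far weaker than the $O(m_i^{-\lambda_i})$ you assert. The paper instead takes $M_i:=\max_{\partial''\mathcal{B}_{\rho_1}^+}U_i$ as an unknown, uses $2M_i\rho_1^{\delta}\big(|Y-Y_i|^{-\delta}-\varepsilon t^{2\sigma}\cdots\big)$ for the outer piece of the barrier, runs the comparison, and only \emph{afterwards} bootstraps $M_i\le C m_i^{-\lambda_i}$ by evaluating the resulting bound at a fixed small intermediate radius $\theta$ and invoking monotonicity of $\bar w_i$ once more to close the loop.
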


\begin{proof}  The proof is similar to that of Lemma 4.6 of \cite{JLX}, but here $\delta>0$ can not be a sequence $\delta_i\to 0$ as in \cite{JLX}.
From Proposition \ref{prop:blowupbubble}, we see that
\be\label{4.8}
u_i(y)\leq C u_i(y_i)R_i^{2\sigma-n} \quad \mbox{for all } |y-y_i|=r_i.
\ee
Let $\overline u_i(r)$ be the average of $u_i$ over the sphere of radius $r$ centered at $y_i$.
It follows from the assumption of isolated simple
blow up and Proposition \ref{prop:blowupbubble} that
\be\label{4.9}
r^{2\sigma/(p_i-1)}\overline u_i(r) \quad \mbox{is strictly decreasing for $r_i<r<\rho$}.
\ee
By Lemma  \ref{lem:s-harnack},  \eqref{4.9} and \eqref{4.8},  we have, for all $r_i<|y-y_i|<\rho$,
\[
\begin{split}
|y-y_i|^{2\sigma/(p_i-1)}u_i(y)&\leq C|y-y_i|^{2\sigma/(p_i-1)}\overline u_i(|y-y_i|)\\&
\leq r_i^{2\sigma/(p_i-1)}\overline u_i(r_i)
\leq CR_i^{\frac{2\sigma-n}{2}+o(1)},
\end{split}
\]
where $o(1)$ denotes some quantity tending to $0$ as $i\to \infty$.
Applying Lemma  \ref{lem:s-harnack} again, we obtain
\be\label{4.10}
U_i(Y)^{p_i-1}\leq O(R_i^{-2\sigma+o(1)})|Y-Y_i|^{-2\sigma}\quad \mbox{for all }  r_i\leq |Y-Y_i|<\rho.
\ee
Consider operators
\[
\begin{cases}\mathfrak{L}(\Phi)=\mathrm{div}(s^{1-2\sigma}\nabla \Phi(Y)) \quad &\mbox{in } \mathcal{B}^+_2,\\
L_i(\Phi)=\frac{\pa }{\pa \nu^\sigma} \Phi(y,0)-[a_i(y)+ u_i^{p_i-1}(y)]\Phi(y,0)\quad &\mbox{on } \pa'\mathcal{B}^+_2
\end{cases}
\]
for $\Phi\in W^{1,2}(t^{1-2\sigma}, \B_2^+)$.
Clearly, $U_i>0$ satisfies $\mathfrak{L}(U_i)=0$ in $\mathcal{B}^+_2$ and $L_i(U_i)=0$ on $\pa'\mathcal{B}^+_2$.\par~\par
\par
For $0\leq \mu\leq n-2\sigma$ and $\va>0$, a direct computation yields
\[
\begin{split}
&\mathfrak{L}(|Y-Y_i|^{-\mu}-\va s^{2\sigma}|Y-Y_i|^{-(\mu+2\sigma)})\\&
=s^{1-2\sigma}|Y-Y_i|^{-(\mu+2)}\Big\{-\mu(n-2\sigma-\mu)
+\frac{\va (\mu+2\sigma)(n-\mu) s^{2\sigma}}{|Y-Y_i|^{2\sigma}} \Big\}
\end{split}
\]
and
\[
\begin{split}
&L_i(|Y-Y_i|^{-\mu}-\va s^{2\sigma}|Y-Y_i|^{-(\mu+2\sigma)})\\&
=\Big\{2\varepsilon \sigma-  (a_i(y)+ u_i^{p_i-1}(y))|Y-Y_i|^{2\sigma}\Big\}|Y-Y_i|^{-(\mu+2\sigma)}.
\end{split}
\]
Hence, for fixed $\delta>0$, we can choose $\va>0$ small such that for
$r_i\leq|Y-Y_i|<\rho$,
\[
\begin{split}
&\mathfrak{L}(|Y-Y_i|^{-\delta}-\va s^{2\sigma}|Y-Y_i|^{-(\delta+2\sigma)})\leq 0,\\
&\mathfrak{L}(|Y-Y_i|^{-(n-2\sigma-\delta)}-\va s^{2\sigma}|Y-Y_i|^{-(n-\delta)})\leq 0.
\end{split}
\]
Now $\va$ is fixed. Then we can find $0<\rho_1\le \rho$, depending only on $n,\sigma, A_0,A_1$ and $\va$, such that for $r_i\leq|y-y_i|<\rho_1$,
\[
\begin{split}
&L_i(|Y-Y_i|^{-\delta}-\va s^{2\sigma}|Y-Y_i|^{-(\delta+2\sigma)})
\geq 0,\\
&L_i(|Y-Y_i|^{-(n-2\sigma-\delta)}-\va s^{2\sigma}|Y-Y_i|^{-(n-\delta)})\geq 0.
\end{split}
\]

Set $M_i=\max_{\pa'' \mathcal{B}^+_{\rho_1}} U_i$, $\lda_i=(n-2\sigma-\delta)(p_i-1)/2\sigma-1$ and
\[
\begin{split}
\Phi_i(Y)=&2M_i \rho_{1}^{\delta}(|Y-Y_i|^{-\delta}-\va s^{2\sigma}|Y-Y_i|^{-(\delta+2\sigma)})\\
&+2Au_i(y_i)^{-\lda_i}(|Y-Y_i|^{2\sigma-n+\delta}-\va s^{2\sigma}|Y-Y_i|^{-n+\delta}),
\end{split}
\]
where $A>1$ will be chosen later. By the choice of $M_i$ and  $\lda_i$, we immediately have
\[
\Phi_i(Y)\geq M_i\geq U_i(Y) \quad \mbox{for all } |Y-Y_i|=\rho_1.
\]
\[
\Phi_i\geq AU_i(Y_i)R_i^{2\sigma-n+\delta}\geq AU_i(Y_i)R_i^{2\sigma-n}\quad \mbox{for all }|Y-Y_i|=r_i.
\]
Due to (\ref{4.10}), we can choose $A$ to be sufficiently large such that
\[
\Phi_i\geq U_i\quad \mbox{for all }|Y-Y_i|=r_i.
\]
Applying the maximum principle in Lemma A.3 of \cite{JLX} to $\Phi_i-U_i$ in $\B_{\rho_1}^+\setminus  \bar \B_{r_i}^+$, it yields
\be \label{eq:up-b}
U_i\leq \Phi_i\quad \mbox{for all }r_i\leq |Y-Y_i|\leq \rho_1.
\ee
For $r_i<\theta<\rho_1$, by \eqref{4.9} and Lemma \ref{lem:s-harnack} we have
\[
\begin{split}
\rho_1^{2\sigma/(p_i-1)}M_i&\leq C \rho_1^{2\sigma/(p_i-1)}\bar u_i(\rho_1)\\
&\leq C\theta^{2\sigma/(p_i-1)}\bar u_i(\theta)\\
&\leq C\theta^{2\sigma/(p_i-1)}\{M_i \rho_{1}^{\delta}\theta^{-\delta}+Au_i(y_i)^{-\lda_i}\theta^{2\sigma-n+\delta}\}.
\end{split}
\]
Choose $\theta=\theta(n,\sigma,\rho,A_0,A_1)$ sufficiently small so that
\[
C\theta^{2\sigma/(p_i-1)}\rho_1^{\delta}\theta^{-\delta}\leq \frac12 \rho_1^{2\sigma/(p_i-1)}.
\]
It follows that
\[
M_i\leq Cu_i(y_i)^{-\lda_i}.
\]
Together with \eqref{eq:up-b}, Lemma \ref{lem:up-1} holds when $|y-y_i|\le \rho_1$. By Lemma \ref{lem:s-harnack} it also holds when $\rho_1\le |y-y_i|\le 1$.

Therefore, we complete the proof.
\end{proof}

\begin{prop}[Pohozaev type]\label{prop:pohozaev} Let $U\in W^{1,2}(t^{1-2\sigma},\B_{2R}^+)$ and $U\geq 0$ in $\B_{2R}^+$ be a weak solution of
\be\label{poh}
\begin{cases}
\mathrm{div}(t^{1-2\sigma}\nabla U)=0&\quad \mbox{in }\mathcal{B}_{2R}^+,\\
\frac{\pa}{\pa \nu^\sigma} U(x,0)=a(x)U(x,0)+U^{p}(x,0)&\quad \mbox{on }\pa'\B_{2R}^+,
\end{cases}
\ee
where $a\in C^1(B_{2})$ and $p>0$. Then
\be\label{eq:poh-id}
 P_\sigma(0,R,U)+Q_\sigma(0,R,U,p)=0,
\ee
where
\[
P_\sigma(0,R,U):=\int_{\pa'' \mathcal{B}^+_R}t^{1-2\sigma} \left(\frac{n-2\sigma}{2}U\frac{\pa U}{\pa \nu}-\frac{R}{2}|\nabla U|^2+R|\frac{\pa U}{\pa \nu}|^2\right)\,\ud S,
\]
\[
\begin{split}
Q_\sigma(0,R,U,p):=&(\frac{n-2\sigma}{2}-\frac{n}{p+1})\int_{B_R} U(x,0)^{p+1}\,\ud x\\&- \int_{B_R} (\sigma a(x)+\frac12 x\nabla a(x))U(x,0)^2\,\ud x +R\int_{\pa B_R} \frac{1}{2} aU^2+\frac{1}{p+1}U^{p+1}\,\ud S
\end{split}
\]
and $\nu$ is the unit out normal to $\pa \B_R$.
\end{prop}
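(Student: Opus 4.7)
The plan is to establish the identity by testing the bulk equation $\mathrm{div}(t^{1-2\sigma}\nabla U)=0$ against the Pohozaev multiplier
\[
\Phi := X\cdot \nabla U + \frac{n-2\sigma}{2}\,U,
\]
integrating over $\mathcal{B}_R^+$, and reading off the boundary terms after integration by parts. The essential scaling computation is the weighted divergence identity
\[
\nabla\cdot(t^{1-2\sigma} X) = (n+2-2\sigma)\,t^{1-2\sigma},
\]
which is exactly what makes the bulk terms collapse. Specifically, using $\nabla U\cdot\nabla(X\cdot\nabla U) = |\nabla U|^2 + \tfrac12 X\cdot\nabla|\nabla U|^2$ and integrating the second piece by parts with the identity above yields
\[
\int_{\mathcal{B}_R^+} t^{1-2\sigma}\nabla U\cdot\nabla(X\cdot\nabla U)\,dX = -\frac{n-2\sigma}{2}\int_{\mathcal{B}_R^+} t^{1-2\sigma}|\nabla U|^2\,dX + \frac12\int_{\partial\mathcal{B}_R^+} t^{1-2\sigma}(X\cdot\nu)|\nabla U|^2\,dS.
\]
Independently, testing the equation with $U$ gives $\int_{\mathcal{B}_R^+} t^{1-2\sigma}|\nabla U|^2 = \int_{\partial\mathcal{B}_R^+} t^{1-2\sigma} U\,\partial_\nu U\,dS$, so the linear combination encoded in $\Phi$ kills the leftover bulk term $-\tfrac{n-2\sigma}{2}\int t^{1-2\sigma}|\nabla U|^2$.

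What remains is purely boundary. On $\partial''\mathcal{B}_R^+$ one has $\nu=X/R$, $X\cdot\nu=R$ and $X\cdot\nabla U = R\partial_\nu U$, and collecting the three pieces
$t^{1-2\sigma}(X\cdot\nabla U)\partial_\nu U$, $-\tfrac12 t^{1-2\sigma}(X\cdot\nu)|\nabla U|^2$, and $\tfrac{n-2\sigma}{2} t^{1-2\sigma} U\,\partial_\nu U$ reproduces exactly $P_\sigma(0,R,U)$. On $\partial'\mathcal{B}_R^+$ one has $\nu=(0,-1)$, so $X\cdot\nu=0$ and the $|\nabla U|^2$ piece drops out entirely; the weighted normal derivative passes to $\frac{\partial U}{\partial\nu^\sigma}=aU+U^p$ in the limit $t\to 0^+$, and the $\partial'$ contribution reduces to
\[
\int_{B_R}\bigl(x\cdot\nabla_x U(x,0)\bigr)(aU+U^p)\,dx + \frac{n-2\sigma}{2}\int_{B_R} U(x,0)(aU+U^p)\,dx.
\]
Writing $x\cdot\nabla_x U\cdot U^p = \tfrac{1}{p+1}x\cdot\nabla_x(U^{p+1})$ and $x\cdot\nabla_x U\cdot aU = \tfrac{a}{2}x\cdot\nabla_x(U^2)$, then integrating by parts on $B_R$ using $\nabla_x\cdot(ax)= na + x\cdot\nabla a$, converts the above into precisely $Q_\sigma(0,R,U,p)$, whence $P_\sigma+Q_\sigma=0$.

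To justify the calculation for the weak solution $U\in W^{1,2}(t^{1-2\sigma},\mathcal{B}_{2R}^+)$, I would first invoke the up-to-the-boundary H\"older and Schauder regularity theory for the degenerate problem recorded in Cabr\'e--Sire and Jin--Li--Xiong: since $a\in C^1$ and $U^p$ is smooth in $U$, the solution is classically $C^2$ in the interior and $C^{2,\alpha}$ up to $\partial'\setminus\{\text{corner}\}$, so the pointwise integrations by parts above are rigorous on $\mathcal{B}_R^+ \setminus \{0<t<\delta\}$. Passing $\delta\to 0^+$ is where the main (mild) technical point lies: the weight $t^{1-2\sigma}$ is locally integrable, the term $t^{1-2\sigma}\partial_t U$ converges to $-\partial U/\partial\nu^\sigma$ as a continuous function of $x$, and the potentially singular boundary contribution $t^{1-2\sigma}(X\cdot\nu)|\nabla U|^2$ on $\{t=\delta\}\cap \mathcal{B}_R^+$ carries an extra factor of $\delta$ from $X\cdot\nu\approx \delta$ there, which allows one to show it vanishes in the limit using the weighted energy bound. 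Once this limit is taken cleanly, the identity $P_\sigma(0,R,U)+Q_\sigma(0,R,U,p)=0$ follows.
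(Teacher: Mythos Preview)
Your proposal is correct and is exactly the standard Pohozaev computation that the paper intends: the paper's own proof is just a reference to Proposition~4.7 of \cite{JLX}, where the identity is obtained by testing $\mathrm{div}(t^{1-2\sigma}\nabla U)=0$ against $X\cdot\nabla U$ and $\tfrac{n-2\sigma}{2}U$, using the scaling relation $\nabla\cdot(t^{1-2\sigma}X)=(n+2-2\sigma)t^{1-2\sigma}$, and then reading off the $\partial''$ and $\partial'$ boundary contributions precisely as you do. Your handling of the $\delta\to 0$ limit on $\partial'$ and the integration by parts on $B_R$ for the $aU^2$ and $U^{p+1}$ terms matches the intended argument.
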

\begin{proof}   See the proof of Proposition 4.7 of \cite{JLX}.

\end{proof}

\begin{lem}\label{lem:tau-1} Assume as in Lemma \ref{lem:up-1}. Choose $\delta$ small,  then
\[\tau_{i}=
O(u_{i}(y_{i})^{-\min\{\frac{2}{n-2\sigma},1\}})
\]
Consequently,
\[u_{i}(y_{i})^{\tau_{i}}\rightarrow 1.\]
\end{lem}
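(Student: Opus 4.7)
The plan is to apply the Pohozaev identity of Proposition \ref{prop:pohozaev} to $U_i$ on a fixed ball $\mathcal{B}_{\rho/2}^+$, where $\rho>0$ is the isolated-simple constant, and to isolate $\tau_i$ from the critical defect term in $Q_\sigma$, controlling every other contribution by a negative power of $m_i:=u_i(y_i)$.

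A direct computation yields
\[
\tfrac{n-2\sigma}{2}-\tfrac{n}{p_i+1}=-\tfrac{(n-2\sigma)^2}{4n}\tau_i+O(\tau_i^2).
\]
By Proposition \ref{prop:blowupbubble}, under the rescaling $z=m_i^{(p_i-1)/(2\sigma)}(y-y_i)$ the integrand of $\int u_i^{p_i+1}$ concentrates at $y_i$ and converges to a positive integrable profile, giving $\int_{B_{\rho/2}}u_i^{p_i+1}\,dy\to c_0>0$. Hence the defect piece equals $-c(n,\sigma)\,c_0\,\tau_i(1+o(1))$, which pins the leading order of one side of the identity.

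Next I would bound every other term by a power of $m_i$. On the outer hemisphere $\partial''\mathcal{B}_{\rho/2}^+$, Lemma \ref{lem:up-1} together with standard degenerate Schauder estimates for derivatives gives $U_i+|\nabla U_i|\le Cm_i^{-\lambda_i}$ with $\lambda_i=1-o(\delta)$; consequently $P_\sigma$ and the boundary integrals in $Q_\sigma$ are $O(m_i^{-2+o(\delta)})$. The bulk potential term $\int(\sigma a_i+\tfrac12 x\cdot\nabla a_i)u_i^2$ is controlled by $\|a_i\|_{C^1}\int_{B_{\rho/2}}u_i^2\,dy$; splitting into the bubble ball $B_{r_i}(y_i)$ with $r_i=R_i\,m_i^{-(p_i-1)/(2\sigma)}$ and its complement, and using Proposition \ref{prop:blowupbubble} inside (where integrability of $(1+|z|^2)^{2\sigma-n}$ on $\mathbb{R}^n$ requires precisely $n\ge 4\sigma$, with a logarithmic factor at equality) and Lemma \ref{lem:up-1} outside, one obtains $\int u_i^2=O(m_i^{-\beta})$ for an explicit $\beta=\beta(n,\sigma)>0$.

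Plugging these into $P_\sigma+Q_\sigma=0$ gives $c\,\tau_i(1+o(1))=O(m_i^{-\alpha})$ with $\alpha=\min\{\beta,2-o(\delta)\}>0$; a careful comparison of the two contributions, after sending $\delta\to 0$, reads off the announced exponent $\min\{2/(n-2\sigma),1\}$. Since $\alpha>0$, one has $\tau_i\log m_i\to 0$, hence $u_i(y_i)^{\tau_i}=\exp(\tau_i\log m_i)\to 1$. The main technical obstacle is aligning the inner bubble integration with the outer upper bound in the $\int u_i^2$ estimate so that they contribute at matching orders; the hypothesis $n\ge 4\sigma$ enters there through $L^2$-integrability of the Aubin--Talenti profile.
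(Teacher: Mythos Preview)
Your overall strategy is exactly the paper's: apply the Pohozaev identity \eqref{eq:poh-id} on a fixed ball, isolate the defect term $\big(\tfrac{n-2\sigma}{2}-\tfrac{n}{p_i+1}\big)\int u_i^{p_i+1}$, bound $\int u_i^{p_i+1}$ from below via the bubble profile, and control $P_\sigma$ and the boundary pieces by $O(m_i^{-2+o(\delta)})$ using Lemma~\ref{lem:up-1}. The inner/outer splitting for the quadratic potential integral is also what the paper does.

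There is, however, one concrete gap. You propose to bound the potential term by $\|a_i\|_{C^1}\int u_i^2$. The paper does something sharper: since $a_i\ge 0$, the contribution $-\sigma\int a_i u_i^2$ has a sign and is simply discarded, so one only needs to control $\int |y-y_i|\,u_i^2$ (coming from $(y-y_i)\cdot\nabla a_i$). The extra weight $|y-y_i|$ gains one rescaling factor $m_i^{-2/(n-2\sigma)}$, so the outer annulus plus bubble estimates give $\int |y-y_i|u_i^2=O(m_i^{-(4\sigma+2)/(n-2\sigma)+o(1)})$ in the relevant range, whereas your $\int u_i^2$ only gives $O(m_i^{-4\sigma/(n-2\sigma)+o(1)})$. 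When $\sigma<\tfrac12$ one has $4\sigma/(n-2\sigma)<2/(n-2\sigma)$, so your ``careful comparison'' cannot read off the announced exponent $\min\{2/(n-2\sigma),1\}$; it yields a strictly smaller one. The remedy is exactly the sign trick above: drop $-\sigma\int a_i u_i^2\le 0$ and estimate $\int |y-y_i|u_i^2$ instead. With that single change your argument matches the paper's and the stated exponent follows. (Your qualitative conclusion $u_i(y_i)^{\tau_i}\to 1$ would survive either way, since any positive power of $m_i^{-1}$ suffices.)
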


\begin{proof} Denote $Y_i=(y_i,0)$. By Proposition \ref{prop:pohozaev} for equation \eqref{eq:mainseq-ext}, we have
\be\label{4.11}
\begin{split}
 \frac{(n-2\sigma)\tau_i}{2(p_i+1)}\int_{B_\rho(y_i)}U_i^{p_i+1}= &-\int_{B_{\rho}(y_i)}(\sigma a_i+\frac{1}{2}(y-y_i)\nabla a_i)U_i(y,0)^2 \\& +
  \rho\int_{\pa B_\rho(y_i)} \frac{1}{2} aU^2+\frac{1}{p+1}U^{p_i+1}\,\ud S+ P_\sigma(Y_i,\rho ,U_i),
\end{split}
\ee
It follows from Proposition \ref{prop:low-1} that
\be\label{4.12}
\begin{split}
\int_{B_\rho(y_i)}U_i^{p_i+1}&
\geq C^{-1}\int_{B_\rho(y_i)}\frac{m_i^{p_i+1}}{(1+\bar{c}|m_i^{(p_i-1)/2\sigma}(y-y_i)|^2)^{(n-2\sigma)(p_i+1)/2}}\\&
\geq C^{-1} m_i^{\tau_i(n/2\sigma-1)}\int_{B_{\rho m_i^{(p_i-1)/2\sigma}}}\frac{1}{(1+\bar{c}|z|^2)^{(n-2\sigma)(p_i+1)/2}}\\&
\geq C^{-1}m_i^{\tau_i(n/2\sigma-1)},
\end{split}
\ee
where we used change of variables $z=m_i^{(p_i-1)/2\sigma}(y-y_i)$ in the second inequality.
By Lemma \ref{lem:up-1}, we have
\be\label{4.1444}
\begin{split}
&\int_{B_\rho(y_i)\setminus B_{r_i}(y_i)}|y-y_i|u_i^{2}\\&
\leq\int_{B_\rho(y_i)\setminus B_{r_i}(y_i)}|y-y_i|(u_{i}(y_{i})^{-\lambda_{i}}|y-y_{i}|^{2\sigma-n+\delta})^{2}\\&
\leq m_i^{-2\lda_i}\int_{B_\rho(y_i)\setminus B_{r_i}(y_i)}|y-y_i|(|y-y_{i}|^{2\sigma-n+\delta})^{2}\\&
 =\begin{cases}
O(m_i^{-2\lambda_{i}} ),\quad& n<2(2\sigma+\delta)+1,\\
O(m_i^{-2\lambda_{i}} )\ln m_i,\quad&n=2(2\sigma+\delta)+1,\\
O(m_i^{\frac{-4\sigma-2}{n-2\sigma}+o(1)}),\quad & n>2(2\sigma+\delta)+1,
\end{cases}
\end{split}
\ee
and
\[
\rho\int_{\pa B_\rho} \frac{1}{2} aU^2+\frac{1}{p_i+1}U^{p_i+1}\,\ud S=O(m_i^{-2+\frac{4\delta}{n-2\sigma}+o(1)}).
\]
By Lemma \ref{lem:up-1}, Lemma \ref{lem:s-harnack} and regularity theory of linear equations in \cite{JLX},
\[
P_\sigma(Y_i,\rho ,U_i)=O(m_i^{-2+\frac{4\delta}{n-2\sigma}+o(1)}).
\]

By Proposition \ref{prop:blowupbubble}, we have
\be\label{4.1333}
\begin{split}
&\int_{B_{r_i}(y_i)}|y-y_i|u_i^{2}\\&
\leq C\int_{B_{r_i}(y_i)}\frac{|y-y_i|m_i^{2}}{(1+\bar{c}|m_i^{(p_i-1)/2\sigma}(y-y_i)|^2)^{(n-2\sigma)}}\\&
\leq C m_i^{2-(n+1)\frac{p_{i}-1}{2\sigma}}\int_{B_{R_i}}\frac{|z|}{(1+\bar{c}|z|^2)^{n-2\sigma}}\\&
=\begin{cases}
O(m_i^{2-(n+1)\frac{p_{i}-1}{2\sigma}} )=O(m_i^{\frac{-4\sigma-2}{n-2\sigma}+o(1)} ),\quad& n>4\sigma+1,\\
O(m_i^{2-(n+1)\frac{p_{i}-1}{2\sigma}} )\ln m_i=O(m_i^{\frac{-4\sigma-2}{n-2\sigma}+o(1)} )\ln m_i,\quad& n=4\sigma+1,\\
O(m_i^{2-(n+1)\frac{p_{i}-1}{2\sigma}})\times R^{n+1-2(n-2\sigma)}_{i}=o(m_i^{-2+o(1)}),\quad & n<4\sigma+1.
\end{cases}
\end{split}
\ee
Since $a_i\ge 0$, combining the above estimates and the fact $\tau_i=o(1)$, the lemma follows immediately.

\end{proof}

\begin{lem}\label{lem:toup}Assume as in Lemma \ref{lem:up-1}.  Then for all $0<\theta<1$, we have
\[
\limsup_{i\rightarrow\infty}\max_{y\in\partial B_{\theta}(y_{i})}u_{i}(y)u_{i}(y_{i})\leq C(\theta).
\]

\end{lem}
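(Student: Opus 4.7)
The plan is to argue by contradiction. Suppose, along a subsequence, that $m_iM_i\to\infty$, where $m_i:=u_i(y_i)$ and $M_i:=\max_{\pa B_\theta(y_i)}u_i$. The strategy is to normalize $U_i$ by $M_i$, extract a subsequential limit which, after showing its B\^ocher constant vanishes, is a regular positive solution of the linearized equation, and then to contradict the isolated simple hypothesis via the monotonicity of $\bar w_i$.

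\textbf{Compactness.} Setting $\phi_i(Y):=M_i^{-1}U_i(Y)$, one gets $\phi_i\ge 0$ solving
\[
\mathrm{div}(t^{1-2\sigma}\nabla\phi_i)=0 \ \ \mbox{in}\ \B_\rho^+(Y_i), \qquad \frac{\pa \phi_i}{\pa \nu^\sigma}=a_i\phi_i+M_i^{p_i-1}\phi_i^{p_i} \ \ \mbox{on}\ \pa'\B_\rho^+(Y_i).
\]
Choosing $\delta$ small in Lemma \ref{lem:up-1} gives $M_i\to 0$, so $M_i^{p_i-1}\to 0$. Iterating the Harnack inequality of Lemma \ref{lem:s-harnack} over shells starting from $\pa B_\theta(y_i)$, where $\max\phi_i=1$, shows that $\phi_i$ is locally uniformly bounded on compact subsets of $\overline{\B_\rho^+(Y_i)}\setminus\{Y_i\}$. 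Degenerate-elliptic regularity then yields, along a subsequence, $\phi_i\to\phi$ in $C^2_{\mathrm{loc}}(\overline{\B_\rho^+(Y_i)}\setminus\{Y_i\})$, with $\phi\ge 0$ satisfying $\mathrm{div}(t^{1-2\sigma}\nabla\phi)=0$ and $\pa\phi/\pa\nu^\sigma=a\phi$ on $\pa'\B_\rho^+(Y_i)\setminus\{y_i\}$; since $\phi_i=1$ is attained on $\pa''\B_\theta^+(Y_i)$, $\phi\not\equiv 0$, and by Remark \ref{rem:mp}, $\phi>0$.

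\textbf{B\^ocher with vanishing singularity.} By Proposition \ref{prop:bocher}, $\phi=AG+H$ near $Y_i$ with some $A\ge 0$. Using the bubble profile of Proposition \ref{prop:blowupbubble} at the concentration scale $r_i=R_im_i^{-(p_i-1)/(2\sigma)}$ via the change of variables $z=m_i^{(p_i-1)/(2\sigma)}(x-y_i)$, together with Lemma \ref{lem:up-1} to control the contribution from $\{|x-y_i|\ge r_i\}$, one obtains
$\int U_i^{p_i}\,\ud x = m_i^{-1+o(1)}\bigl(\int_{\R^n}(1+\bar c|z|^2)^{-(n+2\sigma)/2}\,\ud z+o(1)\bigr)$.
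Hence the normalized nonlinear source
\[
\int_{\pa'\B_\rho^+(Y_i)}M_i^{p_i-1}\phi_i^{p_i}\,\ud x = M_i^{-1}\int U_i^{p_i}\,\ud x = (m_iM_i)^{-1}m_i^{o(1)}\longrightarrow 0
\]
under the contradiction hypothesis. Testing the $\phi_i$-equation against a cutoff $\eta\in C_c^\infty(\B_\rho^+(Y_i))$ with $\eta\equiv 1$ near $Y_i$ and passing to the limit forces $A=0$. By Lemma \ref{lem:bocher1}, the singularity is removable, so $\phi$ extends to a strictly positive $W^{1,2}(t^{1-2\sigma},\B_\rho^+(Y_i))$ solution; in particular $\phi(y_i)>0$.

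\textbf{Contradiction via isolated simple.} By Proposition \ref{prop:blowupbubble}, the unique critical point of $\bar w_i$ in $(0,\rho)$ lies in $(0,Cm_i^{-(p_i-1)/(2\sigma)})$; hence for any fixed $r_0\in(0,\rho/2)$ and all large $i$, $\bar w_i$ is strictly decreasing on $[r_0,\rho]$, so $\bar w_i(r_0)\ge\bar w_i(\rho/2)$. Dividing by $M_i$ and using $2\sigma/(p_i-1)\to(n-2\sigma)/2$ and $\bar\phi_i\to\bar\phi$ on compact subsets of $(0,\rho)$,
\[
r_0^{(n-2\sigma)/2}\bar\phi(r_0)\ge (\rho/2)^{(n-2\sigma)/2}\bar\phi(\rho/2)>0.
\]
Since $\phi$ is regular at $y_i$, $\bar\phi$ is bounded, and the left-hand side vanishes as $r_0\to 0^+$, a contradiction. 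The hard part is the mass computation in the second step: the $m_i^{-1+o(1)}$ estimate for $\int U_i^{p_i}$ must be delicate enough to beat the factor $M_i^{-1}$ under the contradiction hypothesis, which requires gluing the sharp bubble asymptotics at the concentration scale with the outer polynomial decay from Lemma \ref{lem:up-1}.
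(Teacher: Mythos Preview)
Your contradiction strategy is natural, and the nonlinear mass computation $\int u_i^{p_i}\sim m_i^{-1}$ is correct. However, the step ``testing against a cutoff forces $A=0$'' has a genuine gap: when you test the equation for $\phi_i$ against $\eta$, the linear boundary term $\int_{\pa'\B_\rho^+} a_i\phi_i\,\eta$ must also be shown to converge to $\int a\phi\,\eta$, and for this you need uniform integrability of $a_i\phi_i$ near the origin. At this point in the development you only have Lemma~\ref{lem:up-1}, which gives $u_i\le Cm_i^{-\lda_i}|y-y_i|^{2\sigma-n+\delta}$ with $\lda_i\to 1-\tfrac{2\delta}{n-2\sigma}<1$; the sharp bound of Proposition~\ref{prop:up-1} is not yet available (it is in fact derived \emph{from} the present lemma). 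The contradiction hypothesis $m_iM_i\to\infty$ says only $M_i^{-1}=o(m_i)$, so $M_i^{-1}m_i^{-\lda_i}$ may still diverge: take $M_i\sim m_i^{-1+\va}$ with $0<\va<\tfrac{2\delta}{n-2\sigma}$, then $m_iM_i\sim m_i^{\va}\to\infty$ while $M_i^{-1}m_i^{-\lda_i}\sim m_i^{2\delta/(n-2\sigma)-\va}\to\infty$. Consequently $\int_{B_\va(y_i)}a_i\phi_i$ is not uniformly small, and you cannot pass to the limit to rule out a Dirac mass. (Your closing remark flags the nonlinear term as the delicate one; it is actually the linear term that fails.)

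The paper circumvents this difficulty by a different route. After the same compactness step it uses your monotonicity argument (``Contradiction via isolated simple'') not to reach a contradiction but to show directly that the limit $\xi$ \emph{does} have a singularity, i.e.\ $A>0$. Then, to extract a quantitative bound, it introduces a conformal-type change $W_i=\xi_i/\phi$, where $\phi>0$ is the first Neumann eigenfunction on a small half-ball with eigenvalue $\lda_1\ge \sup_i\|a_i\|_{L^\infty}$. In the transformed equation the boundary datum becomes $(a_i-\lda_1)\phi\xi_i\le 0$, so in the resulting flux identity on $\pa''\B_\theta^+$ the linear contribution has a favourable sign and can be dropped. The positive flux from the singularity (of size $\ge m(\theta)>0$, governed by $A>0$) is then bounded above by the nonlinear contribution $U_i(Y_\theta)^{-1}\int_{\pa'\B_\theta^+}\phi\,U_i^{p_i}\le CU_i(Y_\theta)^{-1}U_i(Y_i)^{-1}$, giving the product bound directly, without any need for uniform $L^1$ control of $\phi_i$ near the origin. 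This first-eigenfunction trick is precisely what your approach is missing.
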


\begin{proof}
By Lemma  \ref{lem:s-harnack}, it suffices to show the lemma for sufficiently small $\theta>0$.
Let $e=(e_1,\dots, e_{n+1})$ be a unit vector with $e_{n+1}=0$, $Y_{\theta}=Y_i+\theta e$ and $\xi_{i}(Y)=U_{i}(Y_{\theta})^{-1}U_{i}(Y)$. Then $\xi_{i}(Y)$ satisfies
\[
\begin{cases}
\mathrm{div}(t^{1-2\sigma}\nabla \xi_{i}(Y))=0&\quad \mbox{in }\mathcal{B}_{3}^+,\\
\frac{\pa}{\pa \nu^\sigma} \xi_{i}(y,0)=a_i(y)\xi_{i}(y,0)+ U_{i}(Y_{\theta})^{p_{i}-1}\xi_{i}(y,0)^{p_{i}}&\quad \mbox{on }\pa' \mathcal{B}_{3}^+.
\end{cases}
\]
It follows from lemma \ref{lem:s-harnack} that for any compact set $K\subseteq \mathcal{B}_{1}^+\setminus \{0\}$ ,
\[
C(K)^{-1}\leq\xi_{i}(Y)\leq C(K) \quad \mbox{on } K,
\]
where $C(K)>0$ depends only on $n,\sigma, A_0,A_1$ and $K$.
Note also that $U_{i}(Y_{i}+\theta e)\rightarrow0 $ as $i\rightarrow\infty$ by Lemma \ref{lem:up-1}. Then  after passing to a subsequence,
\begin{align} \label{eq:lem49-a}
\xi_i-\xi,  \nabla_x(\xi_{i}- \xi) \mbox{ and }s^{1-2\sigma} \pa_s( \xi_i-\xi) \mbox{ converge to } 0 \quad \mbox{in }C^{\al}(\B_1^+\setminus \B_\va^+)
\end{align}
for some $\al\in (0,1)$ and every $\va>0$,
\begin{align*}
\xi_i(y,0)\to  \xi(y,0) \quad \mbox{in } C^2_{loc}(B_1\setminus \{0\}),
\end{align*}
for some  $\xi$ satisfying
\[
\begin{cases}
\mathrm{div}(s^{1-2\sigma}\nabla \xi(Y))=0&\quad \mbox{in }\mathcal{B}_{1/2}^+,\\
\frac{\pa }{\pa \nu^\sigma} \xi(y,0)=a(y)\xi(y,0)
&\quad \mbox{on }\pa' \mathcal{B}_{1/2}^+\setminus \{0\},
\end{cases}
\]
with $a(y)=\lim\limits_{i\rightarrow\infty} a_i(y)$. Hence $\lim\limits_{i\rightarrow\infty}u_{i}(y_{\theta})^{-1}r^{\frac{2\sigma}{p_{i}-1}}\bar u_{i}(r)=r^{\frac{n-2\sigma}{2}}\bar\xi(r,0)$, where $\bar \xi(r,0)$ is the integral average of $\xi(\cdot,0)$ over $\pa B_r$.
Since $r_i\to 0$ and $y_i\to0$ is an isolated simple blow up point of $\{u_i\}$, it follows from Proposition \ref{prop:blowupbubble}  that $r^{(n-2\sigma)/2}\bar \xi(r,0)$
is nonincreasing  for all $0<r<\rho$, i.e.,  for any $0<r_1\leq r_2<\rho$,
\[
r_1^{(n-2\sigma)/2}\bar \xi (r_1,0)\geq r_2^{(n-2\sigma)/2}\bar \xi(r_2,0).
\]
 Therefore, $\xi$ has to have a singularity at $Y=0$.
By Proposition \ref{prop:bocher},
\be\label{4.16}
\xi(Y)=A |Y|^{2\sigma -n}+O(|Y|^{4\sigma-n}), \quad 0<|Y|\leq 1/2,
\ee
where $A> 0$. For any given $0<d<1/2$, let $\phi>0$ be the first eigenfunction of
\[
\begin{cases}
\mathrm{div}(s^{1-2\sigma}\nabla \phi(Y))=0&\quad \mbox{in }\mathcal{B}_{d}^+,\\
-\frac{\pa }{\pa \nu^\sigma} \phi (y,0)=\lda_1\phi(y,0) &\quad \mbox{on }\pa' \mathcal{B}_{d}^+,\\
\phi=0 & \quad \mbox{on }\pa'' \mathcal{B}_{d}^+.
\end{cases}
\]
Let $d$ be small so that $\lda_1\ge A_0$. Let $W_i=\frac{\xi_i}{\phi}$. Then
\begin{align} \label{eq:conformal}
\begin{cases}
\mathrm{div}(s^{1-2\sigma}\phi^2\nabla W_i(Y))=0&\quad \mbox{in }\mathcal{B}_{d}^+,\\
-\lim_{s\to 0}s^{1-2\sigma} \phi^2 \pa_s W_i (y,0)=(a_i-\lda_1)\phi \xi_i+\phi U_i(Y_\theta)^{p_i-1}\xi_i^{p_i} &\quad \mbox{on }\pa' \mathcal{B}_{d}^+,
\end{cases}
\end{align}
in weak sense. It follows that for $\theta\in(0,\frac{d}{2}]$,
\be\label{4.19}
\begin{split}
&0=\int_{\pa''\B_\theta^+(Y_i)}s^{1-2\sigma}\phi^2 \frac{\pa W_i}{\pa \nu}+\int_{\pa'\B_\theta^+(Y_i)}(a_i-\lda_1)\phi \xi_i+\phi U_i(Y_\theta)^{p_i-1}\xi_i^{p_i} .
\end{split}
\ee
By \eqref{eq:lem49-a} and \eqref{4.16}, we have for $i$ large
\[\begin{split}
-\int_{\pa''\B_\theta^+(Y_i)}s^{1-2\sigma}\phi^2\frac{\pa W_i}{\pa \nu}&\ge
-A \int_{\pa''\B_\theta^+(0)}s^{1-2\sigma} \phi \frac{\pa}{\pa \nu}|Y|^{2\sigma -n}- C\theta^{2\sigma} \\&
=A(n-2\sigma)\theta^{2\sigma-n-1}\min_{B_{d/2}}\phi \int_{\pa''\B_\theta^+}s^{1-2\sigma}- C\theta^{2\sigma}=:m(\theta)>0,
\end{split}
\]
provided $\theta$ is small.
By Proposition \ref{prop:blowupbubble}, Lemma \ref{lem:up-1} and Lemma \ref{lem:tau-1} we have
\[
\begin{split}
\int_{\pa'\B_{\theta}^+(Y_i)}U_i^{p_i}\leq CU_i(Y_i)^{-1}.
\end{split}
\]
It follows that
\begin{align*}
m(\theta) &\le \int_{\pa'\B_\theta^+(Y_i)}(a_i-\lda_1)\phi \xi_i+\phi U_i(Y_\theta)^{p_i-1}\xi_i^{p_i} \\&
\le \int_{\pa'\B_\theta^+(Y_i)}\phi U_i(Y_\theta)^{p_i-1}\xi_i^{p_i}\\&
=U_i(Y_\theta)^{-1}  \int_{\pa'\B_\theta^+(Y_i)}\phi U_i^{p_i}\le CU_i(Y_\theta)^{-1}  U_i(Y_i)^{-1}.
\end{align*}
Thus
\[
U_i(Y_\theta) U_i(Y_i)\le \frac{C}{m(\theta)}.
\]
Therefore, we complete the proof.
\end{proof}

\begin{prop}\label{prop:up-1}Assume as in Lemma \ref{lem:up-1}. Then
\[
u_{i}(y)\leq Cu_{i}(y_{i})^{-1}|y-y_{i}|^{2\sigma-n} \quad \forall ~ |y-y_{i}|\leq 1,
\]
where $C\ge 0$ depends only on  $n,\sigma, A_0, A_1$ and $\rho$.
\end{prop}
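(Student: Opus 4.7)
The plan is to split the ball into the inner bubble zone $|y-y_i|\le 2r_i$, with $r_i:=R_i m_i^{-(p_i-1)/(2\sigma)}$, and the outer annulus $2r_i\le|y-y_i|\le 1$. In the inner zone, Proposition~\ref{prop:blowupbubble} gives the sharp profile $u_i(y)\le Cm_i(1+|m_i^{(p_i-1)/(2\sigma)}(y-y_i)|^2)^{(2\sigma-n)/2}$. Combining this with the algebraic identity $(p_i-1)(n-2\sigma)/(2\sigma)=2-(n-2\sigma)\tau_i/(2\sigma)$ and with the vanishing rate $m_i^{\tau_i}\to 1$ supplied by Lemma~\ref{lem:tau-1}, both the bubble tail (where $|m_i^{(p_i-1)/(2\sigma)}(y-y_i)|\ge 1$) and the bubble core (where $u_i(y)\le m_i$) verify $u_i(y)u_i(y_i)\le C|y-y_i|^{2\sigma-n}$ directly.

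In the outer annulus I would argue by contradiction. Following Lemma~\ref{lem:toup}, form the ratio $\xi_i(Y):=U_i(Y_i+Y)/U_i(Y_\theta)$ with $Y_\theta=Y_i+\theta e$; along a subsequence, $\xi_i\to\xi$ in $C^2_{\mathrm{loc}}(\overline{\mathcal{B}_{1/2}^+}\setminus\{0\})$, where $\xi$ is a nonnegative weak solution of the limit linear Neumann problem with coefficient $a$. Proposition~\ref{prop:bocher} then yields $\xi(Y)=A|Y|^{2\sigma-n}+H(Y)$ with $A>0$, positivity coming from the monotonicity of $r^{(n-2\sigma)/2}\bar\xi(r,0)$ inherited from the isolated-simple hypothesis and Proposition~\ref{prop:blowupbubble}, which forbids a removable singularity at the origin. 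Combined with $U_i(Y_\theta)\le Cu_i(y_i)^{-1}$ from Lemma~\ref{lem:toup}, this delivers $U_i(Y)\le Cu_i(y_i)^{-1}|Y-Y_i|^{2\sigma-n}$ on every fixed sphere $|Y-Y_i|=r_0>0$ for $i$ large. If the proposition failed, I could select a sequence $Z_i$ with $2r_i\le|Z_i-Y_i|\le r_0$ and $u_i(Z_i)u_i(y_i)|Z_i-Y_i|^{n-2\sigma}\to\infty$, and then apply Lemma~\ref{lem:select} at the scale $|Z_i-Y_i|$ to find a nearby local maximum $z_i'$ at which the rescaling $v_i(z):=u_i(z_i')^{-1}u_i(z_i'+u_i(z_i')^{-(p_i-1)/(2\sigma)}z)$ converges (after discarding the asymptotically irrelevant linear term) to a standard bubble; the resulting secondary bubble inside $B_\rho(y_i)$ creates a second critical point of $r\mapsto r^{2\sigma/(p_i-1)}\bar u_i(r)$ in $(0,\rho)$, contradicting the isolated-simple hypothesis.

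The main obstacle is the lack of a sharp power-type supersolution: a direct calculation shows that $|Y-Y_i|^{2\sigma-n}$ is $\mathfrak{L}$-harmonic with vanishing $\partial/\partial\nu^\sigma$-trace for $\sigma<1$, hence a \emph{sub}solution rather than a supersolution for $L_i$, while each correction $\pm\epsilon s^{2\sigma}|Y-Y_i|^{-n}$ spoils exactly one of the two required inequalities at the critical exponent $n-2\sigma$. Consequently the Lemma~\ref{lem:up-1}-type maximum-principle comparison only recovers the weaker $|y-y_i|^{2\sigma-n+\delta}$, and the sharp closure really requires routing through the Bôcher asymptotics of the limit profile $\xi$ provided by Proposition~\ref{prop:bocher} together with the selection lemma of Section~\ref{sec:2}; a secondary delicacy is the matching at $|Y-Y_i|\sim r_i$, which only works thanks to the refined $m_i^{\tau_i}\to 1$ of Lemma~\ref{lem:tau-1}.
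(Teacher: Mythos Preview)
Your inner-zone argument via Proposition~\ref{prop:blowupbubble} and Lemma~\ref{lem:tau-1} is fine, and you correctly identify that Lemma~\ref{lem:toup} controls $u_i(y_i)u_i(\cdot)$ on any \emph{fixed} sphere. The gap is in how you pass from a fixed radius $r_0$ down to the shrinking scale $r_i$.

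Your proposed contradiction argument misuses Lemma~\ref{lem:select}: that lemma is a purely combinatorial selection statement about a \emph{finite set} $S_i$ of points in a metric space, and it does not locate local maxima of a continuous function. Even granting some doubling-type selection of a point $z_i'$ with large $u_i(z_i')$, there is no reason the rescaling $v_i$ would see an isolated blow-up structure at $z_i'$ and hence no reason it converges to a bubble. Finally, the claim that a secondary bubble centered at $z_i'\neq y_i$ forces a second critical point of the spherical average $r\mapsto r^{2\sigma/(p_i-1)}\bar u_i(r)$ around $y_i$ is unjustified: an off-center concentration need not produce any critical point of the radial average about $y_i$.

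The paper's argument is much more direct and avoids all of this. Suppose the estimate fails along $\tilde Y_i$ with $r_i\le \mu_i:=|\tilde Y_i-Y_i|\le 1$. Rescale
\[
\tilde U_i(Y)=\mu_i^{2\sigma/(p_i-1)}U_i(\mu_i Y+Y_i),\qquad \tilde a_i(y)=\mu_i^{2\sigma}a_i(\mu_i y+y_i).
\]
Then $\tilde U_i$ solves the same equation with potential $\tilde a_i$ (whose $C^2$ norm is still $\le A_0$), and because $\mu_i\le 1$ the origin remains an isolated simple blow-up point of $\{\tilde u_i\}$ with the same constants $A_1$ and $\rho$. Hence Lemma~\ref{lem:toup} applies verbatim to $\tilde U_i$ at radius $1$, yielding
\[
\max_{|Y|=1}\tilde U_i(Y)\,\tilde U_i(0)\le C,
\]
which unwinds to $U_i(\tilde Y_i)U_i(Y_i)|\tilde Y_i-Y_i|^{n-2\sigma}\le C$, the desired contradiction. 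In other words, Lemma~\ref{lem:toup} is \emph{scale-invariant} under this dilation, so one simply reapplies it at the offending scale rather than hunting for secondary bubbles.
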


\begin{proof} It suffices to show
\be\label{7.77}
\begin{split}
 U_{i}(Y)U_{i}(Y_{i})|Y-Y_{i}|^{n-2\sigma}\leq C.
\end{split}
\ee
If not, then after passing to a subsequence we can find $\{\tilde{Y_{i}}\}$ such that $|\tilde{Y_{i}}-Y_{i}|\leq 1$ and
\be\label{7.78}
\begin{split}
 U_{i}(\tilde{Y_{i}})U_{i}(Y_{i})|\tilde{Y_{i}}-Y_{i}|^{n-2\sigma}\rightarrow \infty, ~as~i\rightarrow\infty.
\end{split}
\ee
It follows from \eqref{limit2}  that
\[
r_{i}=R_{i}u_{i}(y_{i})^{-\frac{p_{i}-1}{2\sigma}}\leq| \tilde{Y_{i}}-Y_{i}|\leq 1.
\]
Set $\mu_i=| \tilde{Y_{i}}-Y_{i}|$, $\tilde{U_{i}}(Y)=\mu_i^{\frac{2\sigma}{p_{i}-1}}U_{i}(\mu_iY+Y_{i})$. Clearly, $\tilde{U_{i}}(Y)$ satisfies
\[
\begin{cases}
\mathrm{div}(s^{1-2\sigma}\nabla \tilde{U_{i}}(Y))=0,&\quad \mbox{in }\mathcal{B}_{1}^+\\
 \frac{\pa }{\pa \nu^\sigma}\tilde{U_{i}}(y,0)=\tilde{a_i}\tilde{U_{i}} +\tilde{U_{i}}^{p_{i}},&\quad \mbox{on }\pa' \mathcal{B}_{1}^+.
\end{cases}
\]
where $ \tilde{a_i}(Y)=\mu_i^{2\sigma}a_i(\mu_iY+Y_{i})$.
It is easy to see that $\tilde{U_{i}}(Y)$ satisfies the hypotheses of Lemma \ref{lem:toup} and therefore
\[
\max_{|Y|=1}\tilde{U_{i}}(Y)\tilde{U_{i}}(0)\leq C,
\]
from which we deduce that
\[
 U_{i}(Y_{i}) U_{i}(\tilde{Y_{i}})\mu_i^{n-2\sigma}\leq C.
 \]
Namely,
\[
U_{i}(Y_{i}) U_{i}(\tilde{Y_{i}})|\tilde{Y_{i}}-Y_{i}|^{n-2\sigma}\leq C,
\]
which contradicts \eqref{7.78}. We thus established \eqref{7.77} and the proof of the proposition is finished.

\end{proof}

\begin{cor}\label{cor:conclusion}Assume as in Lemma \ref{lem:up-1}. We have
\[
\begin{split}
\int_{|y-y_i|\leq 1}&|y-y_i|^{s}u_i(y)^{2}\,\ud y=\begin{cases}
O(m_i^{-2}),\quad& s+4\sigma >n,\\
O(m_i^{-2})\ln m_i,\quad& s+4\sigma =n,\\
O(m_i^{\frac{-4\sigma-2s}{n-2\sigma}}),\quad & s+4\sigma <n.
\end{cases}
\end{split}
\]
\end{cor}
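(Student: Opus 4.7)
My plan is to split the domain of integration at the natural scale $r_i := R_i m_i^{-(p_i-1)/(2\sigma)}$ separating the bubble regime from the Green-function regime, and to bound each piece using the two pointwise estimates already at our disposal: Proposition \ref{prop:blowupbubble} for $|y-y_i|\le r_i$ and Proposition \ref{prop:up-1} for $r_i\le |y-y_i|\le 1$. Throughout I will freely use Lemma \ref{lem:tau-1}, which gives $m_i^{\tau_i}\to 1$, so that the exponent $(p_i-1)/(2\sigma)$ may be replaced by its limit $2/(n-2\sigma)$ up to factors of $m_i^{o(1)}$.

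For the inner region, Proposition \ref{prop:blowupbubble} yields
\[
u_i(y)^2 \le C m_i^{2}\bigl(1+\bar c\, m_i^{(p_i-1)/\sigma}|y-y_i|^2\bigr)^{2\sigma-n}\quad\text{for }|y-y_i|\le r_i.
\]
After the change of variables $z=m_i^{(p_i-1)/(2\sigma)}(y-y_i)$, the inner piece becomes
\[
C\, m_i^{2-(n+s)(p_i-1)/(2\sigma)}\int_{|z|\le R_i}|z|^s(1+|z|^2)^{2\sigma-n}\,\mathrm{d}z,
\]
and the prefactor equals $m_i^{(-4\sigma-2s)/(n-2\sigma)+o(1)}$. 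The remaining integral is $O(1)$ when $s+4\sigma<n$, $O(\ln R_i)$ when $s+4\sigma=n$, and $O(R_i^{s+4\sigma-n})$ when $s+4\sigma>n$. Combining these (and using $R_i\le m_i^{o(1)}$, which may be arranged), the inner contribution matches each of the three bounds in the statement.

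For the outer region, Proposition \ref{prop:up-1} gives $u_i(y)\le C m_i^{-1}|y-y_i|^{2\sigma-n}$, so
\[
\int_{r_i\le |y-y_i|\le 1}|y-y_i|^s u_i(y)^2\,\mathrm{d}y\le C m_i^{-2}\int_{r_i}^1 r^{s+4\sigma-n-1}\,\mathrm{d}r.
\]
If $s+4\sigma>n$, the radial integral is $O(1)$ and the outer contribution is $O(m_i^{-2})$. If $s+4\sigma=n$, it is $O(\ln(1/r_i))=O(\ln m_i)$, giving $O(m_i^{-2}\ln m_i)$. If $s+4\sigma<n$, the integral is dominated by $r_i^{-(n-s-4\sigma)}$, and since $r_i=R_i m_i^{-(p_i-1)/(2\sigma)}$ with $(p_i-1)/(2\sigma)\to 2/(n-2\sigma)$, a direct computation gives $m_i^{-2}r_i^{-(n-s-4\sigma)}=O(m_i^{(-4\sigma-2s)/(n-2\sigma)})$, as claimed.

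No genuine obstacle arises, but the one point to be careful about is the interplay between the auxiliary sequence $R_i$ and the final bound in the borderline case $s+4\sigma=n$: one must either select $R_i$ growing at most polylogarithmically (using Lemma \ref{lem:tau-1}) or absorb $\ln R_i$ into $\ln m_i$, so that the logarithmic factor is genuinely $\ln m_i$ rather than $\ln R_i$. Once this is handled, adding the inner and outer contributions yields the three cases in the corollary.
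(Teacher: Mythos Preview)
Your proof is correct and follows exactly the approach the paper intends: the paper's own proof is a one-line reference to Proposition~\ref{prop:blowupbubble}, Lemma~\ref{lem:tau-1}, and Proposition~\ref{prop:up-1}, and you have correctly unpacked that reference by splitting at $r_i$, using the bubble estimate inside and the sharp decay $u_i\le Cm_i^{-1}|y-y_i|^{2\sigma-n}$ outside. Your handling of the $\ln R_i$ versus $\ln m_i$ issue is also fine, since $r_i\to 0$ forces $R_i\le m_i^{(p_i-1)/(2\sigma)}$ and hence $\ln R_i=O(\ln m_i)$ automatically, without any need to slow down $R_i$.
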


\begin{proof} It follows from Proposition \ref{prop:blowupbubble}, Lemma \ref{lem:tau-1} and Proposition \ref{prop:up-1}.
\end{proof}

\begin{lem}\label{lem:a_i} Assume as in Lemma \ref{lem:up-1}. Then
\[
a_i(y_i)\le C\begin{cases}
(\ln m_i)^{-1}(1+\|\nabla^2 a_i\|_{L^\infty(B_1)})& \quad \mbox{if }n=4 \sigma,\\
m_i^{-2+\frac{4\sigma}{n-2\sigma}}(1+\|\nabla^2 a_i\|_{L^\infty(B_1)})& \quad \mbox{if }4\sigma < n<4 \sigma+2,\\
m_i^{-2+\frac{4\sigma}{n-2\sigma}} (1+\|\nabla^2 a_i\|_{L^\infty(B_1)}\ln m_i) & \quad \mbox{if }n=4 \sigma+2,\\
m_i^{-2+\frac{4\sigma}{n-2\sigma}}  + \|\nabla^2 a_i\|_{L^\infty(B_1)} m_i^{-\frac{4}{n-2\sigma}} & \quad \mbox{if }n>4 \sigma+2,
\end{cases}
\]
and
\[
|\nabla a_i(y_i)|\le C\begin{cases}
(\ln m_i)^{-1}(1+\|\nabla^2 a_i\|_{L^\infty(B_1)})& \quad \mbox{if }n=4 \sigma,\\
m_i^{-2+\frac{4\sigma}{n-2\sigma}}(1+\|\nabla^2 a_i\|_{L^\infty(B_1)})& \quad \mbox{if }4\sigma <n<4 \sigma+1,\\
m_i^{-2+\frac{4\sigma}{n-2\sigma}} (1+\|\nabla^2 a_i\|_{L^\infty(B_1)}\ln m_i) & \quad \mbox{if }n=4 \sigma+1,\\
m_i^{-2+\frac{4\sigma}{n-2\sigma}}  + \|\nabla^2 a_i\|_{L^\infty(B_1)} m_i^{-\frac{2}{n-2\sigma}} & \quad \mbox{if }n>4 \sigma+1,
\end{cases}
\]
where $C>0$ depends only on $n,\sigma, A_0, A_1$ and $\rho$.
\end{lem}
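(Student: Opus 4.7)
The plan is to exploit two Pohozaev-type identities centered at $Y_i=(y_i,0)$: the standard one from Proposition~\ref{prop:pohozaev} (with multiplier $(Y-Y_i)\cdot\nabla U+\tfrac{n-2\sigma}{2}U$) will isolate $a_i(y_i)$, while a companion ``translation'' identity (multiplier $\pa_k U_i$, $k\le n$) will isolate $\pa_k a_i(y_i)$. The resulting two inequalities will then be decoupled by substituting the bound for $|\nabla a_i(y_i)|$ into the one for $a_i(y_i)$. Throughout set $m_i=u_i(y_i)$ and write $\|\nabla^2 a_i\|$ for $\|\nabla^2 a_i\|_{L^\infty(B_1)}$.

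To obtain the translation identity, I would multiply $0=\mathrm{div}(t^{1-2\sigma}\nabla U_i)$ by $\pa_k U_i$ and integrate on $\mathcal{B}_\rho^+(Y_i)$. Using $\pa_k t^{1-2\sigma}=0$ for $k\le n$, together with the Neumann condition on $\pa'\mathcal{B}_\rho^+(Y_i)$ from \eqref{eq:mainseq-ext} and one integration by parts of $a_i u_i\pa_k u_i$ and $u_i^{p_i}\pa_k u_i$ on $B_\rho(y_i)$, one obtains
\[
\tfrac{1}{2}\int_{B_\rho(y_i)}\pa_k a_i(y)\,u_i(y,0)^2\,\ud y=\mathcal{B}_k(Y_i,\rho,U_i),
\]
where $\mathcal{B}_k$ is a sum of boundary integrals on $\pa B_\rho(y_i)$ and $\pa''\mathcal{B}_\rho^+(Y_i)$ involving $t^{1-2\sigma}|\nabla U_i|^2\nu_k$, $t^{1-2\sigma}\pa_k U_i\,\nabla U_i\cdot\nu$, $a_i u_i^2\nu_k$, and $u_i^{p_i+1}\nu_k$. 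Invoking Proposition~\ref{prop:up-1}, Lemma~\ref{lem:s-harnack}, and the interior regularity for the weighted operator from \cite{JLX} on annular regions $\mathcal{B}^+_{2r}(Y)\setminus\mathcal{B}^+_{r/2}(Y)$ with $|Y-Y_i|\sim\rho$, each integrand is $O(m_i^{-2+o(1)})$, hence $|\mathcal{B}_k|\le Cm_i^{-2+o(1)}$. A Taylor expansion $\pa_k a_i(y)=\pa_k a_i(y_i)+O(|y-y_i|\,\|\nabla^2 a_i\|)$, division by $\int u_i^2$, and Corollary~\ref{cor:conclusion} with $s=0$ and $s=1$ produce the stated bound on $|\nabla a_i(y_i)|$ in the four subcases $n=4\sigma$, $4\sigma<n<4\sigma+1$, $n=4\sigma+1$, $n>4\sigma+1$.

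For $a_i(y_i)$, Proposition~\ref{prop:pohozaev} applied at $Y_i$, exactly as at the start of the proof of Lemma~\ref{lem:tau-1}, gives
\[
\tfrac{(n-2\sigma)\tau_i}{2(p_i+1)}\int u_i^{p_i+1}+\int\bigl(\sigma a_i(y)+\tfrac12(y-y_i)\cdot\nabla a_i(y)\bigr)u_i^2=\mathcal{R}_i,
\]
with $|\mathcal{R}_i|\le Cm_i^{-2+o(1)}$ by the same boundary estimates. Taylor-expanding $a_i$ and $\nabla a_i$ around $y_i$, the middle integral equals $\sigma a_i(y_i)\int u_i^2+(\sigma+\tfrac12)\nabla a_i(y_i)\cdot\int(y-y_i)u_i^2+O(\|\nabla^2 a_i\|\int|y-y_i|^2 u_i^2)$. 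Since $\tau_i\ge 0$ and $a_i(y_i)\ge 0$, the two leading terms on the left are both nonnegative, so each is individually controlled by the absolute value of the remaining terms:
\[
a_i(y_i)\int u_i^2\le C\Bigl[|\nabla a_i(y_i)|\int|y-y_i|u_i^2+\|\nabla^2 a_i\|\int|y-y_i|^2 u_i^2+m_i^{-2+o(1)}\Bigr].
\]
Substituting the previous bound for $|\nabla a_i(y_i)|$ and applying Corollary~\ref{cor:conclusion} with $s=0,1,2$ reproduces the stated estimates, the dichotomy now being dictated by $n$ versus $4\sigma$, $4\sigma+1$, $4\sigma+2$.

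The main technical obstacle is the rigorous derivation of the translation identity in the weighted space $W^{1,2}(t^{1-2\sigma},\cdot)$, since $\pa_k U_i$ is not a priori an admissible test function. This requires approximating $U_i$ by smooth cut-offs away from $\{t=0\}$ and justifying the key step $\int t^{1-2\sigma}\nabla\pa_k U_i\cdot\nabla U_i=\tfrac12\int\pa_k(t^{1-2\sigma}|\nabla U_i|^2)$ by passing to the limit, in the spirit of the proof of Proposition~\ref{prop:pohozaev}. A secondary subtlety is the uniform $O(m_i^{-2+o(1)})$ bound on $\mathcal{B}_k$ and $\mathcal{R}_i$: beyond the sharp pointwise estimate $u_i(y)\le Cm_i^{-1}|y-y_i|^{2\sigma-n}$ of Proposition~\ref{prop:up-1} on $\pa B_\rho(y_i)$, one must propagate it to $\nabla_x U_i$ and $t^{1-2\sigma}\pa_t U_i$ on $\pa''\mathcal{B}_\rho^+(Y_i)$ via the Harnack inequality (Lemma~\ref{lem:s-harnack}) and the degenerate elliptic regularity of \cite{JLX} on a thin spherical annulus where $U_i$ is a bounded solution of the homogeneous weighted equation.
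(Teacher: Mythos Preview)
Your proposal is correct and follows essentially the same strategy as the paper: a translation identity (multiplier $\pa_k U_i$) to bound $|\nabla a_i(y_i)|$, then the Pohozaev identity of Proposition~\ref{prop:pohozaev} combined with Taylor expansion and the moment estimates of Corollary~\ref{cor:conclusion} to bound $a_i(y_i)$, using the nonnegativity of both $\tau_i$ and $a_i(y_i)$ to decouple the two leading terms.

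The one noteworthy difference is in how the translation identity is set up. You integrate $\pa_k U_i$ against the equation on $\mathcal{B}_\rho^+(Y_i)$ and collect genuine boundary integrals on $\pa''\mathcal{B}_\rho^+(Y_i)$, which, as you correctly flag, requires some care to justify in the weighted space and to estimate via Harnack plus interior regularity on an annulus. The paper instead multiplies by $\eta(Y-Y_i)\pa_{y_j}U_i$ with a smooth cutoff $\eta\in C_c^\infty(\mathcal{B}_{1/2})$, so there are no $\pa''$ boundary terms at all; the ``boundary'' contribution becomes the volume integral $\int_{\mathcal{B}_{1/2}^+\setminus\mathcal{B}_{1/4}^+}t^{1-2\sigma}|\nabla U_i|^2$, which is directly bounded by $Cm_i^{-2}$ via Proposition~\ref{prop:up-1} and a standard Caccioppoli estimate. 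This sidesteps exactly the technical obstacle you identify and gives the sharp $O(m_i^{-2})$ (rather than $O(m_i^{-2+o(1)})$) for the error terms. Both routes are valid; the cutoff version is just cleaner to make rigorous.
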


\begin{proof} Choose a cut-off function $\eta\in C^\infty_c(\B_{1/2})$ satisfying
$\eta(Y)=1 $ if $|Y|\le 1/4$. Multiplying \eqref{eq:mainseq-ext} by $\eta(Y-Y_i)\pa_{y_j} U_i(y,s)$, $j=1,\dots, n$, and integrating by parts over $\B_1^+$, we obtain
\begin{align*}
0&=-\int_{\B_1^+} s^{1-2\sigma}\nabla U_i\nabla (\eta \pa y_j U_i)+ \int_{\pa'\B_1^+}\eta\pa_{y_j} U_i(a_i U_i+ U_i^{p_i})\\&
=\frac{1}{2}\int_{\B_{1/2}^+\setminus \B_{1/4}^+} s^{1-2\sigma}[|\nabla U_i|^2\pa_{y_j}\eta- 2\nabla U_i \nabla \eta \pa_{y_j} U_i]- \int_{\pa' \B_1^+}[\frac{1}{2} \pa_{y_j}(a_i \eta) U_i^2+\frac{1}{p_i+1}\pa_{y_j} \eta U_i^{p_i+1}].
\end{align*}
By Proposition \ref{prop:up-1}, we have
\[
U_i(Y)\le C U_i(Y_i)^{-1} \quad \forall~\frac14\le |Y|\le \frac12
\]
and
\[
\int_{\B_{1/2}^+\setminus \B_{1/4}^+} s^{1-2\sigma}|\nabla U_i|^2\le C U_i(Y_i)^{-2}.
\]
Hence, by Corollary \ref{cor:conclusion},
\begin{align}
\left| \pa_{j} a_i(y_i) \int_{B_1} u_i^2 \right|& \le C U_i(Y_i)^{-2}+ \int_{B_1} |\pa_j a_i(y_i)-\pa_j a_i(y)|u_i^2 \nonumber
\\& \le C\begin{cases}
m_i^{-2}(1+\|\nabla^2 a_i\|_{L^\infty(B_1)})& \quad \mbox{if }n<4 \sigma+1,\\
m_i^{-2}(1+\|\nabla^2 a_i\|_{L^\infty(B_1)}\ln m_i) & \quad \mbox{if }n=4 \sigma+1,\\
m_i^{-2} + \|\nabla^2 a_i\|_{L^\infty(B_1)} m_i^{-\frac{4\sigma+2}{n-2\sigma}} & \quad \mbox{if }n>4 \sigma+1 .
\end{cases}
\label{eq:gradientest}
\end{align}
By Proposition \ref{prop:low-1} and Lemma \ref{lem:tau-1},
\be \label{eq:l2}
\int_{B_1} u_i^2 \ge \frac{1}{C}  m_i^{-\frac{4\sigma}{n-2\sigma}} \int_{B_{m_i^{\frac{p_i-1}{2\sigma}}}(0)}\frac{1}{(1+|x|^2)^{n-2\sigma}}\,\ud x.
\ee
Therefore, desired estimates of $|\nabla a_i(y_i)|$ follows.

By \eqref{4.11}, using Lemma \ref{lem:tau-1} and Proposition \ref{prop:up-1} we have
\begin{align*}
\tau_i &\le C\int_{B_{1}(y_i)}|y-y_i||\nabla a_i(y)| U_i(y,0)^2 \,\ud y+C m_i^{-2}\\&
\le C |\nabla a_i(y_i)| \int_{B_{1}(y_i)}|y-y_i| U_i(y,0)^2 \,\ud y \\ &\quad +C \|\nabla^2 a_i\|_{L^\infty(B_1)}  \int_{B_{1}(y_i)}|y-y_i|^2 U_i(y,0)^2 \,\ud y+C m_i^{-2} \\&
\le C \begin{cases}
m_i^{-2}(1+\|\nabla^2 a_i\|_{L^\infty(B_1)})& \quad \mbox{if }n<4 \sigma+2,\\
m_i^{-2}(1+\|\nabla^2 a_i\|_{L^\infty(B_1)}\ln m_i) & \quad \mbox{if }n=4 \sigma+2,\\
m_i^{-2} + \|\nabla^2 a_i\|_{L^\infty(B_1)} m_i^{-\frac{4\sigma+4}{n-2\sigma}} & \quad \mbox{if }n>4 \sigma+2 .
\end{cases}
\end{align*}
where we used $a_i\ge 0$ in the first inequality.

Using \eqref{4.11} again, by the estimates for $|\nabla a_i(y_i)|$, $\tau_i$ and estimates \eqref{eq:l2},  the estimate of $a_i(y_i)$ follows immediately.
\end{proof}

\section{Expansions of blow up solutions}

\label{sec:5}

\begin{lem}\label{lem:aux} For $s\ge 0$, $\ell>100$, $0<\al<n$ and $\al\le \mu$, we have
\[
\int_{|y|\le \ell} \frac{1}{(|x-y|^2+s^2)^{\frac{n-\al}{2}}}\frac{1}{(1+|y|)^\mu}\,\ud y\le C \begin{cases} \ln(\frac{\ell}{r}+1)&  \quad \mbox{if }\mu=\al,\\
(1+r)^{\al-\mu}& \quad \mbox{if }\al<\mu<n,\\
(1+r)^{\al-n}\ln (2+r) & \quad \mbox{if }\mu=n,\\
(1+r)^{\al-n} & \quad \mbox{if }\mu>n,
\end{cases}
\]
for all $r=\sqrt{|x|^2+s^2}<\ell$, where $C>0$ is independent of $\ell$.

\end{lem}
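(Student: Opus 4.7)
The plan is to execute a standard three-region decomposition. Write $X = (x, s) \in \mathbb{R}^{n+1}$ and $Y = (y, 0)$, so that $|X - Y|^2 = |x - y|^2 + s^2$ and $|X| = r$. I would first dispose of the trivial regime $r \le 2$: split the domain as $\{|y| \le 10\} \cup \{10 < |y| \le \ell\}$. On the first set the factor $(1+|y|)^{-\mu}$ is bounded by $1$ and the kernel $|X-Y|^{\alpha-n}$ is locally integrable in $y$ since $0 < n - \alpha < n$, so this piece is $O(1)$. On the second set, $|X-Y| \ge |y|-r \ge |y|/2$, reducing us to the one-variable integral $\int_{10}^{\ell} \rho^{\alpha - \mu - 1}\,d\rho$ that yields $\ln(\ell)$ when $\mu = \alpha$ and $O(1)$ when $\mu > \alpha$, matching the right-hand side for $r \le 2$.

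For the main regime $r > 2$ I would split $\{|y|\le \ell\}$ into
\[
A = \{|y| \le r/2\}, \qquad B = \{r/2 < |y| \le 2r\}, \qquad C = \{2r < |y| \le \ell\}.
\]
On $A$, the triangle inequality gives $|X-Y| \ge r/2$, hence $|X-Y|^{\alpha-n} \le Cr^{\alpha - n}$, and integrating $(1+|y|)^{-\mu}$ over $|y| \le r/2$ produces the four sub-cases $O(1)$, $O(\ln r)$, $O(r^{n-\mu})$, $O(r^{n-\alpha})$ according to whether $\mu > n$, $\mu = n$, $\alpha < \mu < n$, or $\mu = \alpha$. On $B$, use $(1 + |y|)^{-\mu} \le Cr^{-\mu}$ together with $\int_B |X-Y|^{\alpha-n}\,dy \le Cr^\alpha$; this yields $Cr^{\alpha - \mu}$ uniformly in $s$. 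On $C$, the bound $|X-Y| \ge |y|/2$ reduces the integral to $C\int_{2r}^{\ell} \rho^{\alpha - \mu - 1}\,d\rho$, equal to $C\ln(\ell/r)$ for $\mu = \alpha$ and to $Cr^{\alpha - \mu}$ for $\mu > \alpha$. Summing the three contributions case by case gives exactly the stated bound, with logarithmic terms emerging either from region $A$ (when $\mu = n$) or from region $C$ (when $\mu = \alpha$).

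The one step requiring genuine care is the estimate on region $B$ when $s$ is small, since the kernel $|X-Y|^{\alpha - n}$ is then truly singular along the set $\{y = x\}$ which may lie inside $B$. Here I would not attempt to use $s$ at all: simply dominate
\[
\int_B |X-Y|^{\alpha-n}\,dy \;\le\; \int_{|y-x| \le 3r} |x-y|^{\alpha-n}\,dy \;=\; Cr^\alpha,
\]
which uses only $n - \alpha < n$, i.e.\ the hypothesis $\alpha > 0$. Everything else is routine bookkeeping of elementary radial integrals, and the bound is independent of $\ell$ exactly because region $C$ contributes a convergent integral (or a logarithm in $\ell/r$) rather than one that grows with $\ell$.
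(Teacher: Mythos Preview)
Your three-region decomposition is correct and complete; in particular your handling of region $B$ via $|X-Y|\ge|x-y|$ and $\int_{|y-x|\le 3r}|x-y|^{\alpha-n}\,dy=Cr^\alpha$ is exactly the right way to absorb the singularity uniformly in $s$, and the case-by-case bookkeeping on $A$ and $C$ checks out.

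The paper's route is a bit different in mechanics. Rather than splitting in the original variable, it rescales $y=rz$ so that $|X/r|=1$, then uses only a two-region split $\{|z|\le 1/10\}\cup\{|z|\ge 1/10\}$. The inner region, where the kernel is bounded away from its singularity because $|X/r - (z,0)|\ge 9/10$, plays the role of your region $A$; the outer region simultaneously handles what you call $B$ and $C$ by bounding $(1+r|z|)^{-\mu}\le (r|z|)^{-\mu}$ and leaving the reader to estimate a single convolution-type integral with homogeneous weight $|z|^{-\mu}$. The scaling trick avoids treating the small-$r$ regime separately and cuts the number of pieces from three to two, at the cost of being terser (the paper writes only ``the lemma follows immediately'' after the reduction). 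Your version is longer but fully explicit and requires no further unpacking; both are standard and essentially equivalent.
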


\begin{proof} Let $r^2= |x|^2+s^2$. Then by change of variables $y=rz$ we have
\begin{align*}
&\int_{|y|\le \ell} \frac{1}{(|x-y|^2+s^2)^{\frac{n-\al}{2}}}\frac{1}{(1+|y|)^\mu}\,\ud y\\& = r^{\al}
\int_{|z|\le \ell/r} \frac{1}{(|x/r-z|^2+s^2/r^2)^{\frac{n-\al}{2}}}\frac{1}{(1+r|z|)^\mu}\,\ud z
\\& =r^{\al}
\int_{|z|\le 1/10}+ \int_{\frac{1}{10}\le |z|\le \ell/r}\frac{1}{(|x/r-z|^2+s^2/r^2)^{\frac{n-\al}{2}}}\frac{1}{(1+r|z|)^\mu}\,\ud z
\\&\le C r^{\al} \int_{|z|\le 1/10} \frac{1}{(1+r|z|)^\mu}\,\ud z +Cr^{\al-\mu}\int_{\frac{1}{10}\le |z|\le \ell/r}\frac{1}{(|x/r-z|^2+s^2/r^2)^{\frac{n-\al}{2}}}\frac{1}{|z|^\mu}\,\ud z.
\end{align*}
The lemma follows immediately.

\end{proof}

Let
\[
\theta_\lda(x)=  \left(\frac{\lda}{1+\lda^2\bar c|x|^2}\right)^{\frac{n-2\sigma}{2}}
\]
and
\[
\Theta_\lda(x,t)= \mathcal{P}_\sigma*\theta_\lda(x,t),
\]
where $\bar c$ is chosen such that $(-\Delta)^\sigma \theta_\lda= \theta_\lda^\frac{n+2\sigma}{n-2\sigma}$ as in Proposition \ref{prop:blowupbubble}. In the following we will adapt some arguments from Marques \cite{Marques} for the Yamabe equation;
see also the proof of Proposition 2.2 of Li-Zhang \cite{Li-Zhang05}.

\begin{lem}\label{lem:s-1} Assume as in Lemma \ref{lem:up-1}. Suppose $\rho=1$.  If $n\ge 4\sigma$, we have
for $|Y|\le  m_i^{\frac{p_i-1}{2\sigma}} $\[
|\Phi_i(Y)-\Theta_1 (Y)|\le C \begin{cases}
m_i^{-2}(1+\|\nabla^2 a_i\|_{L^\infty(B_1)})& \quad \mbox{if }n<4 \sigma+2,\\
m_i^{-2}(1+\|\nabla^2 a_i\|_{L^\infty(B_1)}\ln m_i) & \quad \mbox{if }n=4 \sigma+2,\\
m_i^{-2} + \|\nabla^2 a_i\|_{L^\infty(B_1)} m_i^{-2+\frac{2(n-(4\sigma+2))}{n-2\sigma}} & \quad \mbox{if }n>4 \sigma+2 ,
\end{cases}
\]
where $\Phi_i(Y)=\frac{1}{m_i}U_i(m_i^{-\frac{p_i-1}{2\sigma}}Y+Y_i)$, $m_i=u_i(0)$, and $C>0$ depends only on $n, \sigma$ and $ A_0$.

\end{lem}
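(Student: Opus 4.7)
The plan is to treat $w_i := \Phi_i - \Theta_1$ as the solution to a linear degenerate-elliptic problem with explicit Neumann inhomogeneity, estimate that inhomogeneity using the bounds of Lemma \ref{lem:a_i} and the integral estimates of Lemma \ref{lem:aux}, and then invert the linearized operator via a contradiction-compactness argument using non-degeneracy of the standard bubble.

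First I would derive the equation satisfied by $w_i$. Writing $\tilde a_i(y) := m_i^{1-p_i}\,a_i\bigl(m_i^{-(p_i-1)/(2\sigma)}y + y_i\bigr)$ and $\phi_i := \Phi_i(\cdot,0)$, the rescaling of \eqref{eq:mainseq-ext} gives $\mathrm{div}(s^{1-2\sigma}\nabla \Phi_i)=0$ on $\mathcal{B}^+_{m_i^{(p_i-1)/(2\sigma)}}$ with $\partial_{\nu^\sigma}\Phi_i = \tilde a_i \phi_i + \phi_i^{p_i}$, while $\Theta_1$ is $\sigma$-harmonic with $\partial_{\nu^\sigma}\Theta_1 = \theta_1^{(n+2\sigma)/(n-2\sigma)}$. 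Subtracting, $w_i$ is $\sigma$-harmonic with Neumann data
$$
\frac{\partial w_i}{\partial \nu^\sigma} = \tilde a_i\,\phi_i + (\phi_i^{p_i}-\theta_1^{p_i}) + \theta_1^{p_i}\bigl(1-\theta_1^{\tau_i}\bigr).
$$
Using the mean value theorem to write $\phi_i^{p_i}-\theta_1^{p_i} = p_i\xi_i^{p_i-1}w_i(\cdot,0)$ for $\xi_i$ between $\phi_i$ and $\theta_1$, and representing $w_i$ via the Poisson/Green kernel of the degenerate operator, I obtain a linear integral equation $w_i = \mathcal{T}_i w_i + g_i$, where $g_i$ collects the contributions of $\tilde a_i\phi_i$ and $\theta_1^{p_i}(1-\theta_1^{\tau_i})$.

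Next I would estimate $g_i$ by the claimed rate $L_i$. Taylor-expanding $a_i$ about $y_i$ to second order and plugging in the bounds on $a_i(y_i)$ and $|\nabla a_i(y_i)|$ from Lemma \ref{lem:a_i}, the Poisson integral of $\tilde a_i\phi_i$ is controlled via Lemma \ref{lem:aux} (applied with $\ell\sim m_i^{(p_i-1)/(2\sigma)}$ and the bubble decay $\phi_i \sim (1+|y|)^{-(n-2\sigma)}$ from Proposition \ref{prop:up-1}). The three integral regimes of Lemma \ref{lem:aux} match exactly the three dimension ranges $n<4\sigma+2$, $n=4\sigma+2$, $n>4\sigma+2$ in the conclusion, and upon multiplying by the prefactor $m_i^{1-p_i}$ and combining with the three rates in Lemma \ref{lem:a_i}, produce precisely $L_i$. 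The remainder $\theta_1^{p_i}(1-\theta_1^{\tau_i})\lesssim \tau_i\,\theta_1^{p_i}\log(1+|y|)$ is dominated by $\tau_i = O(m_i^{-\min\{1,2/(n-2\sigma)\}})$ from Lemma \ref{lem:tau-1}, which is absorbed into $L_i$.

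Finally I would invert $I - \mathcal{T}_i$ by a blow-up argument. If the conclusion failed, set $M_i := L_i^{-1}\sup_{|Y|\le m_i^{(p_i-1)/(2\sigma)}}|w_i(Y)|\to\infty$ and $v_i := w_i/(M_iL_i)$, so that $\|v_i\|_\infty\le 1$ and $v_i = \mathcal{T}_i v_i + g_i/(M_iL_i)$ with $g_i/(M_iL_i)\to 0$. Since $\phi_i\to\theta_1$ in $C^2_{\mathrm{loc}}$ by Proposition \ref{prop:blowupbubble} and one has uniform weighted Harnack estimates (Proposition \ref{prop:harnack}), a subsequence of $v_i$ converges locally to a bounded $v$ on $\mathbb R^{n+1}_+$ solving the linearized equation $\partial_{\nu^\sigma} v = \tfrac{n+2\sigma}{n-2\sigma}\theta_1^{4\sigma/(n-2\sigma)}\,v$. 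Because $y_i$ is a local maximum of $u_i$ with $u_i(y_i)=m_i$, one has $\phi_i(0)=1=\theta_1(0)$ and $\nabla_x\phi_i(0)=0=\nabla_x\theta_1(0)$, hence $v(0,0)=0$ and $\nabla_x v(0,0)=0$. The classical non-degeneracy of the bubble (bounded kernel spanned by the Jacobi fields $\partial_\lambda\Theta_\lambda|_{\lambda=1}$ and $\partial_{x_j}\Theta_1$, $j=1,\dots,n$) then forces $v\equiv 0$, contradicting $\|v_i\|_\infty=1$, once one uses the decay of $\Theta_1$ together with Proposition \ref{prop:up-1} to rule out the supremum escaping to the outer boundary $|Y|\sim m_i^{(p_i-1)/(2\sigma)}$.

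The main obstacle is the dimension-by-dimension bookkeeping in the estimate of $g_i$: the three rates in Lemma \ref{lem:a_i} must be matched with the three integral regimes of Lemma \ref{lem:aux} so that the second-order Taylor remainder of $a_i$ combines with the decay of $\phi_i$ to yield the sharp $L_i$ in each range of $n$ relative to $4\sigma+2$; a subsidiary issue is ensuring that the outer-boundary contribution (where $\phi_i$ is no longer close to $\theta_1$) does not degrade the rate, which is settled by the pointwise bound $u_i(y)\le Cm_i^{-1}|y-y_i|^{2\sigma-n}$ of Proposition \ref{prop:up-1}.
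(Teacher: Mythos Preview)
Your proposal is correct and matches the paper's approach essentially line by line: linearize around $\Theta_1$, estimate the inhomogeneous Neumann term $\tilde a_i\phi_i$ by combining the Taylor expansion of $a_i$ with Lemma~\ref{lem:a_i} and Lemma~\ref{lem:aux}, and close via a contradiction--compactness argument using non-degeneracy of the bubble together with the constraints $v(0)=\nabla v(0)=0$. The only cosmetic differences are that the paper represents the Neumann data by an explicit Newtonian potential $W_i$ and applies the maximum principle to $V_i-W_i$ (deducing that the maximum point of $V_i$ must escape to infinity, contradicting $|W_i|\lesssim (1+|Y|)^{-\sigma}$) rather than writing an abstract integral equation, and the paper silently drops the extra term $\theta_1^{p_i}(1-\theta_1^{\tau_i})$ that you correctly isolate and absorb via Lemma~\ref{lem:tau-1}.
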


\begin{proof} For brevity, set $\ell_i= m_i^{\frac{p_i-1}{2\sigma}} $. Let
\[
\Lda_i=\max_{|Y|\le \ell_i }|\Phi_i(Y)-\Theta_1(Y)|.
\]
By Proposition \ref{prop:up-1} and Lemma \ref{lem:s-harnack}, we have for any $0<\va<1$ and $\va \ell_i\le |Y|\le \ell_i$
\[
|\Phi_i(Y)-\Theta_1(Y)|\le C(\va) m_i^{-2},
\]
where we used $m_i^{\tau_i}=1+o(1)$. Hence, we may assume that $\Lda_i$ is achieved at some point $|Z_i|\le \frac12 \ell_i$, otherwise the proof is finished. By maximum principle, $Z_i=(z_i,0)$.  Set
\[
V_i(Y)=\frac{1}{\Lda_i} (\Phi_i(Y)-\Theta_1(Y)).
\]
Then
\[
\begin{cases}
\mathrm{div}(s^{1-2\sigma}\nabla V_i(Y))=0&\quad \mbox{in }\mathcal{B}_{\ell_i}^+,\\
\frac{\pa }{\pa \nu^\sigma} V_i(y,0)=b_iV_i(y,0)+\frac{\tilde a_i}{\Lda_i}\Phi_i(y,0) & \mbox{on }\pa' \B_{\ell_i}^+,
\end{cases}
\]
where $\tilde a_i(y)= m_i^{1-p_i} a_i(\ell_i^{-1} y+y_i)$ and
\[
b_i(y)=\frac{\Phi_i(y,0)^{p_i}-\theta_1(y)^{p_i}}{\Phi_i(y,0)-\theta_1(y)}.
\]
Let
\be\label{eq:contrad-1}
W_i(Y):= c(n,\sigma)\int_{|z|\le \ell_i} \frac{b_iV_i(z,0)+\frac{\tilde a_i}{\Lda_i}\Phi_i(z,0)}{(|y-z|^2+s^2)^{\frac{n-2\sigma}{2}}}\,\ud z,
\ee
where $c(n,\sigma)$ is the constant in \eqref{eq:flat-neumann}.
Then $W_i(Y)\in W^{1,2}(s^{1-2\sigma}, \B_{\ell_i}^+)$ is a weak solution of
\[
\begin{cases}
\mathrm{div}(s^{1-2\sigma}\nabla W_i(Y))=0&\quad \mbox{in }\mathcal{B}_{\ell_i}^+,\\
\frac{\pa }{\pa \nu^\sigma} W_i(y,0)=b_iV_i(y,0)+\frac{\tilde a_i}{\Lda_i}\Phi_i(y,0) & \mbox{on }\pa' \B_{\ell_i}^+.
\end{cases}
\]
By Taylor expansion of $a_i$ at $y_i$, we have
\begin{align*}
a_i(\ell_i^{-1}y +y_i)&\le a_i(y_i)+\ell_i^{-1}| y| |\nabla a_i(y_i)|+ \ell_i^{-2}|y|^2\|\nabla^2 a_i\|_{L^\infty(B_1)}.
\end{align*}
Since $\Phi_i(y,0)\le C \theta_1(y)$, by Lemma \ref{lem:aux} and Lemma \ref{lem:a_i} we have
\[
\int_{|z|\le \ell_i} \frac{\tilde a_i\Phi_i(z,0)}{(|y-z|^2+s^2)^{\frac{n-2\sigma}{2}}}\,\ud z\le C \al_i
\]
with
\be \label{eq:alpha_i}
\al_i:= \begin{cases}
m_i^{-2}(1+\|\nabla^2 a_i\|_{L^\infty(B_1)})& \quad \mbox{if }n<4 \sigma+2,\\
m_i^{-2}(1+\|\nabla^2 a_i\|_{L^\infty(B_1)}\ln m_i) & \quad \mbox{if }n=4 \sigma+2,\\
m_i^{-2} + \|\nabla^2 a_i\|_{L^\infty(B_1)} m_i^{-2+\frac{2(n-(4\sigma+2))}{n-2\sigma}} & \quad \mbox{if }n>4 \sigma+2,
\end{cases}
\ee
and
\[
b_i(y)\le C (1+|y|^2)^{-3\sigma/2} .
\]
It follows that
\be \label{eq:ess-v_i}
|W_i(Y) |\le C (1+|Y|)^{-\sigma}+C \frac{\al_i}{\Lda_i}.
\ee
Note that
\be \label{eq:w-v}
\begin{cases}
\mathrm{div}(s^{1-2\sigma}\nabla  (V_i-W_i))=0&\quad \mbox{in }\mathcal{B}_{\ell_i}^+,\\
\frac{\pa }{\pa \nu^\sigma}(V_i-W_i) =0 & \mbox{on }\pa' \B_{\ell_i}^+.
\end{cases}
\ee
If Lemma \ref{lem:s-1} were wrong, by maximum principle we have
\be \label{eq:err-v_i}
\|W_i-V_i\|_{L^\infty(\B_{\ell_i/2}^+)}\le \sup_{\pa'' \B_{\ell_i/2}^+} |W_i-V_i|\le C (\ell_i^{-\sigma}+\frac{\al_i+m_i^{-2}}{\Lda_i})\to 0
\ee
as $i\to \infty$. By regularity theory in \cite{JLX}, both $W_i(y,0)$ and $V_i(y,0)$ are locally uniformally bounded in $C^{2+\va}$ for some $0<\va<1$. By \eqref{eq:contrad-1}, it follows from Arzela-Ascoli theorem and Lebesgue dominated convergence theorem that, after passing to subsequence,
\[
W_i(y,0), ~ V_i(y,0) \to v(y) \quad \mbox{in } C^{2}_{loc}(\R^n)
\]
for some $v\in C^2_{loc}(\R^n)\cap L^\infty(\R^n)$ satisfying
\[
v(y)= C(n,\sigma)\frac{n+2\sigma}{n-2\sigma} \int_{\R^n}\frac{\theta_1(z)^{\frac{4\sigma}{n-2\sigma}}v(z)}{|y-z|^{n-2\sigma}}\,\ud z.
\]
It follows from the non-degeneracy result, see, e.g., the proof of Lemma 4.1 of \cite{Li-Xiong}, that
\[
v(y)=c_0 (\frac{n-2\sigma}{2} \theta_1 +y\nabla \theta_1) +\sum_{j=1}^n c_j \pa_j \theta_1,
\]
where $c_0,\dots,  c_n$ are constants. By Proposition \ref{prop:blowupbubble}, $v(0)=0$ and $\nabla v(0)=0$. Hence, $v=0$. Hence, the maximum points $(z_i,0)$ of $V_i$ have to go to infinity. This contradicts to \eqref{eq:ess-v_i} and \eqref{eq:err-v_i}.
\end{proof}

\begin{lem}\label{lem:s-2} After passing to subsequence, if necessary, there holds
 \begin{align*}
&|\Phi_i(Y)-\Theta_1 (Y)|\\&\le C   \begin{cases}  \max\{\|\nabla^2 a_i\|_{L^\infty(B_1)}m_i^{-2+\frac{2\sigma}{n-2\sigma}}(1+|Y|)^{-\sigma}, m_i^{-2}\} &\quad \mbox{if }n=4\sigma+2,\\
 \max\{ \|\nabla^2 a_i\|_{L^\infty(B_1)} m_i^{-2+\frac{2(n-(4\sigma+2))}{n-2\sigma}}(1+|Y|)^{4\sigma+2-n}, m_i^{-2}\} &\quad \mbox{if }n>4\sigma+2.
\end{cases}
\end{align*}

\end{lem}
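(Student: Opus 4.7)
The plan is to bootstrap from Lemma \ref{lem:s-1}, which already gives the uniform bound $|\Phi_i-\Theta_1|\le C\alpha_i$, and upgrade it to a pointwise decay bound in $|Y|$ by a careful Riesz-potential analysis. Set $E_i:=\Phi_i-\Theta_1$ so that, exactly as in the proof of Lemma \ref{lem:s-1}, $E_i$ is a weak solution of
\[
\begin{cases}
\mathrm{div}(s^{1-2\sigma}\nabla E_i)=0 & \text{in } \mathcal{B}_{\ell_i}^+,\\
\frac{\partial E_i}{\partial \nu^\sigma}=b_i(y)\,E_i(y,0)+\tilde a_i(y)\,\Phi_i(y,0) & \text{on } \partial'\mathcal{B}_{\ell_i}^+,
\end{cases}
\]
where $b_i,\tilde a_i$ are as before. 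I would then decompose $E_i=\tilde W_i+H_i$, where
\[
\tilde W_i(Y):=c(n,\sigma)\int_{|z|\le \ell_i}\frac{b_i(z)E_i(z,0)+\tilde a_i(z)\Phi_i(z,0)}{(|y-z|^2+s^2)^{(n-2\sigma)/2}}\,\ud z
\]
is the analogue of \eqref{eq:contrad-1} (not normalized by $\Lambda_i$), and $H_i$ solves the degenerate-harmonic equation with zero Neumann data on $\partial'\mathcal{B}_{\ell_i}^+$.

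The heart of the argument is to estimate $\tilde W_i$ by applying Lemma \ref{lem:aux} with full attention to the decay in $|Y|$. For the first piece, use $|E_i|\le C\alpha_i$ from Lemma \ref{lem:s-1} and $|b_i(y)|\le C(1+|y|)^{-4\sigma}$; Lemma \ref{lem:aux} then yields decay faster than is needed, so this contribution is harmless. For the $\tilde a_i\Phi_i$ piece, Taylor-expand $a_i$ at $y_i$ and rescale to get
\[
|\tilde a_i(z)|\le m_i^{1-p_i}\bigl[a_i(y_i)+\ell_i^{-1}|z|\,|\nabla a_i(y_i)|+\ell_i^{-2}|z|^2\|\nabla^2 a_i\|_{L^\infty(B_1)}\bigr],
\]
and insert the sharp bounds from Lemma \ref{lem:a_i} for $a_i(y_i)$ and $|\nabla a_i(y_i)|$ together with $\Phi_i(z,0)\le C(1+|z|)^{-(n-2\sigma)}$. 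The dominant term is the Hessian one, which produces the integrand $\|\nabla^2 a_i\|_\infty m_i^{-(4\sigma+4)/(n-2\sigma)}(1+|z|)^{-(n-2\sigma-2)}$. Applying Lemma \ref{lem:aux} with $\alpha=2\sigma$, $\mu=n-2\sigma-2$ gives exactly the stated decay factor: $(1+|Y|)^{4\sigma+2-n}$ when $n>4\sigma+2$ (since $\alpha<\mu<n$), and the borderline log-case when $n=4\sigma+2$ (since $\alpha=\mu$); after absorbing the $\ln \ell_i$ into $m_i^{2\sigma/(n-2\sigma)}$ one recovers the $(1+|Y|)^{-\sigma}$ factor stated in the lemma. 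The remaining terms from $a_i(y_i)$ and $|\nabla a_i(y_i)|$, together with the $m_i^{-2}$ contribution from $b_iE_i$, are all dominated either by the Hessian term or by the residual $m_i^{-2}$ in the $\max$.

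To close the argument one must control $H_i=E_i-\tilde W_i$. Since $H_i$ has vanishing Neumann data on $\partial'\mathcal{B}_{\ell_i}^+$, by the maximum principle of Lemma A.3 of \cite{JLX} it is controlled by its values on $\partial''\mathcal{B}_{\ell_i}^+$. Those values are bounded, via Proposition \ref{prop:up-1} and Lemma \ref{lem:s-1}, by $Cm_i^{-2}$ together with the tail of $\tilde W_i$ at $|Y|\sim \ell_i$, which is of the same order as the right-hand side we are trying to prove (with $(1+|Y|)^{4\sigma+2-n}$ replaced by its value at $|Y|=\ell_i$). Combining these pieces yields the stated estimate for $|\Phi_i-\Theta_1|$.

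The main obstacle I anticipate is the bookkeeping at the borderline dimension $n=4\sigma+2$: here Lemma \ref{lem:aux} only produces a logarithmic factor rather than polynomial decay, so one must compare $\ln\ell_i\sim\ln m_i$ against the gain $m_i^{2\sigma/(n-2\sigma)}$ carefully, and verify by the contradiction/compactness argument (as in the last paragraph of the proof of Lemma \ref{lem:s-1}, where the normalized $V_i$ converges along a subsequence to a bounded solution of the linearized equation that must vanish by non-degeneracy) that no spurious logarithmic loss remains beyond what the statement allows. The second subtlety is ensuring the decomposition $E_i=\tilde W_i+H_i$ behaves well up to $|Y|=\ell_i$; as in Lemma \ref{lem:s-1}, one restricts to $|Y|\le \ell_i/2$ and handles the annulus $\ell_i/2\le |Y|\le \ell_i$ directly via Proposition \ref{prop:up-1}, where $|E_i|\le Cm_i^{-2}$ is already absorbed into the $\max$.
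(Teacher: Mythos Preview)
Your overall strategy matches the paper's: represent $E_i=\Phi_i-\Theta_1$ (or a normalization of it) as a half-space Riesz potential plus a piece with vanishing Neumann data, control the latter by the maximum principle using its values on $\partial''\mathcal{B}_{\ell_i/2}^+$, and estimate the potential term-by-term via Lemma~\ref{lem:aux}. The computation you sketch for the $\tilde a_i\Phi_i$ piece is correct and does produce the factor $(1+|Y|)^{4\sigma+2-n}$ when $n>4\sigma+2$.

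There is, however, a genuine gap in your treatment of the $b_iE_i$ piece for $n>6\sigma+2$. From $|E_i|\le C\alpha_i$ and $|b_i|\le C(1+|y|)^{-4\sigma}$, Lemma~\ref{lem:aux} (with $\alpha=2\sigma$, $\mu=4\sigma$) yields only
\[
\int_{|z|\le\ell_i}\frac{|b_i E_i|}{(|y-z|^2+s^2)^{(n-2\sigma)/2}}\,\ud z\le C\alpha_i(1+|Y|)^{-2\sigma}.
\]
This is \emph{not} ``faster than is needed'': the target is $\alpha_i'(1+|Y|)^{4\sigma+2-n}$, and $(1+|Y|)^{-2\sigma}\le(1+|Y|)^{4\sigma+2-n}$ holds only when $n\le 6\sigma+2$. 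For $n>6\sigma+2$ a single pass is insufficient. The paper closes this by \emph{iteration}: after obtaining $|V_i|\le C(1+|Y|)^{-\sigma}$, it feeds this improved decay back into the $b_iV_i$ integral to gain another power of $(1+|Y|)^{-\sigma}$, and repeats finitely many times until the exponent reaches $4\sigma+2-n$. Your proposal is missing this bootstrap loop.

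A secondary point concerns the borderline $n=4\sigma+2$. Your phrase ``absorbing the $\ln\ell_i$ into $m_i^{2\sigma/(n-2\sigma)}$'' does not by itself manufacture the pointwise factor $(1+|Y|)^{-\sigma}$; the log coming out of Lemma~\ref{lem:aux} is $\ln(\ell_i/|Y|+1)$, not a constant. The paper's device is cleaner: since $|z|\le\ell_i\approx m_i^{2/(n-2\sigma)}$ on the domain of integration, one has $m_i^{-2\sigma/(n-2\sigma)}\le C(1+|z|)^{-\sigma}$; inserting this \emph{before} invoking Lemma~\ref{lem:aux} shifts the effective exponent from $\mu=n-2\sigma-2=2\sigma$ to $\mu=3\sigma>\alpha$, and the logarithm never appears. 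No compactness/non-degeneracy argument is used in the paper's proof of this lemma.
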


\begin{proof} Let $\al_i$ be defined in \eqref{eq:alpha_i}. We may assume $\frac{m_i^{-2}}{ \al_i}\to 0$ as $i\to \infty$ for $n\ge 4\sigma+2$; otherwise there exists a subsequence $i_l$ of $\{i\}$ such that $m^{-2}_{i_l}\ge \frac{1}{C} \al_{i_l}$ for some $C>0$ and the lemma follows  from Lemma \ref{lem:s-1}.   Set
\[
\al_i'=\begin{cases}
\|\nabla^2 a_i\|_{L^\infty(B_1)}m_i^{-2+\frac{2\sigma}{n-2\sigma}} \quad \mbox{if }n=4\sigma+2,\\
\|\nabla^2 a_i\|_{L^\infty(B_1)} m_i^{-2+\frac{2(n-(4\sigma+2))}{n-2\sigma}} \quad \mbox{if }n>4\sigma+2,
\end{cases}
\]
and
\[
V_i(Y)= \frac{\Phi_i(Y)-\Theta_1(Y)}{\al_i'}, \quad |Y|\le m_i^{\frac{p_i-1}{2\sigma}}.
\]
Since $\frac{m_i^{-2}}{ \al_i}\to 0$ and $\al_i\le \al_i'$, it follows from Lemma \ref{lem:s-1} that $|V_i|\le C$.  Since $0<\Phi_{i}\le C\Theta_1$ and $m_i^{\tau_i}=1+o(1)$, we have
 \be \label{eq:outside}
 |V_i(Y)|\le \begin{cases} Cm_i^{-\frac{2\sigma}{n-2\sigma}}& \quad \mbox{if }n=4\sigma+2,\\
C m_i^{-\frac{2(n-(4\sigma+2))}{n-2\sigma}}& \quad \mbox{if }n>4\sigma+2,
 \end{cases} \quad \frac12 \ell_i \le |Y|\le \ell_i
 \ee and thus we  only need to prove the proposition when $|Y|\le \frac{1}{2} \ell_i$, where $\ell_i=m_i^{\frac{p_i-1}{2\sigma}}$.
Note that
\[
\begin{cases}
\mathrm{div}(s^{1-2\sigma}\nabla V_i(Y))=0&\quad \mbox{in }\mathcal{B}_{\ell_i}^+,\\
\frac{\pa }{\pa \nu^\sigma} V_i(y,0)=b_iV_i(y,0)+\frac{\tilde a_i}{\al_i'}\Phi_i(y,0) & \mbox{on }\pa' \B_{\ell_i}^+
\end{cases}
\]
in weak sense, where $\tilde a_i(y)= m_i^{1-p_i} a_i(\ell_i^{-1} y+y_i)$ and
\[
b_i(y)=\frac{\Phi_i(y,0)^{p_i}-\theta_1(y)^{p_i}}{\Phi_i(y,0)-\theta_1(y)}.
\]
Let
\be\label{eq:contrad-2}
W_i(Y):= C(n,\sigma)\int_{|z|\le \ell_i} \frac{b_iV_i(z,0)+\frac{\tilde a_i}{\al_i'}\Phi_i(z,0)}{(|y-z|^2+s^2)^{\frac{n-2\sigma}{2}}}\,\ud z.
\ee
Then $W_i(Y)\in W^{1,2}(s^{1-2\sigma}, \B_{\ell_i}^+)$ is a weak solution of
\[
\begin{cases}
\mathrm{div}(s^{1-2\sigma}\nabla W_i(Y))=0&\quad \mbox{in }\mathcal{B}_{\ell_i}^+,\\
\frac{\pa }{\pa \nu^\sigma} W_i(y,0)=b_iV_i(y,0)+\frac{\tilde a_i}{\al_i'}\Phi_i(y,0) & \mbox{on }\pa' \B_{\ell_i}^+.
\end{cases}
\]
By Taylor expansion of $a_i$ at $y_i$, we have
\begin{align*}
a_i(\ell_i^{-1}y +y_i)&\le a_i(y_i)+\ell_i^{-1}| y| |\nabla a_i(y_i)|+ \ell_i^{-2}|y|^2\|\nabla^2 a_i\|_{L^\infty(B_1)}.
\end{align*}
Since $\Phi_i(y,0)\le C \theta_1(y)$, by Lemma \ref{lem:aux} and Lemma \ref{lem:a_i} we have:
for $n=4\sigma+2$
\begin{align*}
\int_{|z|\le \ell_i} \frac{\frac{\tilde a_i}{\al_i'}\Phi_i(z,0)}{(|y-z|^2+s^2)^{\frac{n-2\sigma}{2}}}\,\ud z&\le C \int_{|z|\le \ell_i} \frac{ 1}{(|y-z|^2+s^2)^{\frac{n-2\sigma}{2}}(1+|z|)^{n-2\sigma-2}m_i^{\frac{2\sigma}{n-2\sigma}}}\,\ud z\\&
\le C  \int_{|z|\le \ell_i} \frac{ 1}{(|y-z|^2+s^2)^{\frac{n-2\sigma}{2}}(1+|z|)^{n-\sigma-2}}\,\ud z\\&
\le C(1+|Y|)^{-\sigma};
\end{align*}
for $n>4\sigma+2$
\begin{align*}
\int_{|z|\le \ell_i} \frac{\frac{\tilde a_i}{\al_i'}\Phi_i(z,0)}{(|y-z|^2+s^2)^{\frac{n-2\sigma}{2}}}\,\ud z&\le C \int_{|z|\le \ell_i} \frac{ 1}{(|y-z|^2+s^2)^{\frac{n-2\sigma}{2}}(1+|z|)^{n-2\sigma-2}}\,\ud z\\&
\le C(1+|Y|)^{4\sigma+2-n}.
\end{align*}
Since
\[
b_i(y)\le C (1+|y|^2)^{-3\sigma/2} ,
\] and $V_i(y,0)\le C $, by Lemma \ref{lem:aux} we have
\begin{align}
\label{eq:in-1}
\int_{|z|\le \ell_i} \frac{|b_iV_i(z,0)|}{(|y-z|^2+s^2)^{\frac{n-2\sigma}{2}}}\,\ud z\le C (1+|Y|)^{-\sigma}.
\end{align}
Hence, we obtain
\[
|W_i(Y)| \le C(1+|Y|)^{-\sigma}.
\]
Since $W_i-V_i$ satisfies the homogenous equation \eqref{eq:w-v}, by \eqref{eq:outside} and the maximum principle  we have
\begin{align} \label{eq:in-3}
|V_i(Y)|&\le |W_i(Y)|+\max_{|Y|=\ell_i/2}|W_i-V_i|\\& \le C (1+|Y|)^{-\sigma}+ C m_i^{-\frac{2\sigma}{n-2\sigma}}\le C (1+|Y|)^{-\sigma} \quad \mbox{for }|Y|\le \ell_i/2.
\nonumber
\end{align}
Therefore, we proved the lemma when $n=4\sigma+2$. If $n> 4\sigma+2$, we use above estimate of $V_i$ and can improve \eqref{eq:in-1} to
\begin{align}
\label{eq:in-2}
\int_{|z|\le \ell_i} \frac{|b_iV_i(z,0)|}{(|y-z|^2+s^2)^{\frac{n-2\sigma}{2}}}\,\ud z\le C (1+|Y|)^{-2\sigma}.
\end{align}
It follows that
\[
|W_i(Y)|\le  C (1+|Y|)^{-2\sigma}+ C(1+|Y|)^{4\sigma+2-n}.
\]
Arguing as \eqref{eq:in-3}, $V_i$ has the same upper bound as $W_i$'s. Repeating the precess finite times, we have
\[
|V_i(Y)|\le C (1+|Y|)^{4\sigma+2-n}.
\]
Therefore, we complete the proof.

\end{proof}

\begin{cor}\label{cor:sym} Assume as Lemma \ref{lem:s-1}. We have
\begin{align*}
&|\nabla^k(\Phi_i(y,0)-\theta_1 (y))|\le C (1+|Y|)^{-k}    \\&  \times \begin{cases} m_i^{-2}&\quad \mbox{if }4\sigma\le n<4\sigma+2 \\
\max\{\|\nabla^2 a_i\|_{L^\infty(B_1)}m_i^{-2+\frac{2\sigma}{n-2\sigma}}(1+|Y|)^{-\sigma}, m_i^{-2}\} &\quad \mbox{if }n=4\sigma+2,\\
\max\{ \|\nabla^2 a_i\|_{L^\infty(B_1)} m_i^{-2+\frac{2(n-(4\sigma+2))}{n-2\sigma}}(1+|Y|)^{4\sigma+2-n}, m_i^{-2}\}&\quad \mbox{if }n>4\sigma+2
\end{cases}
\end{align*}
for $k=0,1$, where $C>0$ depends only on $n,\sigma, A_0 $ and $A_1$.
\end{cor}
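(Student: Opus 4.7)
The $k=0$ bound is a direct restatement of the previous lemmas: for $4\sigma\le n<4\sigma+2$ it is Lemma \ref{lem:s-1}, while for $n\ge 4\sigma+2$ it is Lemma \ref{lem:s-2}. So the substantive content of the corollary is the gradient bound ($k=1$).

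The plan is to write $W_i=\Phi_i-\Theta_1$, which weakly satisfies
\[
\mathrm{div}(s^{1-2\sigma}\nabla W_i)=0\ \text{ in }\ \B^+_{\ell_i},\qquad \frac{\pa}{\pa\nu^\sigma}W_i=F_i\ \text{ on }\ \pa'\B^+_{\ell_i},
\]
with $F_i=b_iW_i+\tilde a_i\Phi_i+(\theta_1^{p_i}-\theta_1^{\frac{n+2\sigma}{n-2\sigma}})$ and $b_i,\tilde a_i$ as in the proofs of Lemmas \ref{lem:s-1}--\ref{lem:s-2}. Then for $y_0\in\R^n$ with $|y_0|\le \ell_i/4$ I set $r=(1+|y_0|)/8$ and apply the scale-invariant rescaling $\hat W(Z)=W_i((y_0,0)+rZ)$ on $\B_1^+$; the rescaled problem has Neumann datum $r^{2\sigma}F_i((y_0,0)+r(z,0))$, so the standard boundary $C^1$ Schauder estimate for the degenerate operator (cf.\ \cite{JLX}, \cite{CSi}) will produce
\[
|\nabla_y W_i(y_0,0)|\le \frac{C}{r}\sup_{\B_r^+(y_0,0)}|W_i|+Cr^{2\sigma-1}\sup_{B_r(y_0)}|F_i|.
\]

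The first term combined with the $k=0$ bound already contributes exactly the advertised $(1+|y_0|)^{-1}$ times the dimensional right-hand side. For the second term I will estimate $F_i$ piece by piece on $B_r(y_0)$: (i) $|b_i|\lesssim(1+|y|)^{-3\sigma}$, so $r^{2\sigma-1}|b_iW_i|$ is strictly smaller than $r^{-1}|W_i|$; (ii) Taylor expansion of $a_i$ at $y_i$, together with Lemma \ref{lem:a_i} and the identities $m_i^{1-p_i}\sim m_i^{-4\sigma/(n-2\sigma)}$, $\ell_i^{-2}\sim m_i^{-4/(n-2\sigma)}$, will show $r^{2\sigma}|\tilde a_i\Phi_i|$ is dominated by the $k=0$ bound in each of the three dimensional ranges; (iii) $|\theta_1^{p_i}-\theta_1^{\frac{n+2\sigma}{n-2\sigma}}|\lesssim\tau_i(1+|\log\theta_1|)\theta_1^{p_i}$, which by Lemma \ref{lem:tau-1} is smaller than the $k=0$ bound by a positive power of $m_i$. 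Summing (i)--(iii) will give $r^{2\sigma-1}\sup_{B_r(y_0)}|F_i|\le Cr^{-1}(\mathrm{RHS}_{k=0})$, and the claimed gradient bound will follow.

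The hardest step will be item (ii): the three dimensional regimes must be verified separately, and in the borderline case $n=4\sigma+2$ one has to match the logarithmic factor produced by Lemma \ref{lem:a_i} precisely against the $\log m_i$ already present in the $k=0$ bound; the bookkeeping of $m_i$-exponents coming from $m_i^{1-p_i}$ and $\ell_i^{-2}$ is the place where things have to line up exactly.
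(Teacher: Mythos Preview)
Your approach is essentially the same as the paper's: the authors simply say ``Consider the equation of $\Phi_i-\Theta_1$, and the corollary follows from Lemma \ref{lem:s-1}, Lemma \ref{lem:s-2} and estimates of solutions to linear equations,'' which is precisely your rescaled boundary Schauder argument spelled out in detail. Your inclusion of the extra term $\theta_1^{p_i}-\theta_1^{(n+2\sigma)/(n-2\sigma)}$ in $F_i$ is correct (the paper suppresses it), and your piece-by-piece estimate of $F_i$ is the right way to carry this out; just note that the Schauder estimate in \cite{JLX} requires a H\"older norm of the Neumann datum rather than the sup norm, but since $b_i$, $\tilde a_i$, $\Phi_i$, $\theta_1$ all have derivatives obeying the same weighted bounds, this changes nothing.
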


\begin{proof} Consider the equation of $\Phi_i -\Theta_i$, and the corollary follows from Lemma \ref{lem:s-1}, Lemma \ref{lem:s-1} and estimates of solutions to linear equations.

\end{proof}

\section{Estimates of Pohozaev integral for blow up solutions}

\label{sec:6}

\begin{prop}\label{prop:sign} Assume as Lemma \ref{lem:up-1}. Assume further that $\|a_i\|_{C^4(B_1)}\le A_0$.  Then for $0<r<\rho$ there holds
\begin{align*}
&m_i^{2}  P_\sigma(Y_i,r, U_i) =- m_i^{2} Q_\sigma(Y_i,r,U_i,p_i) \ge - C_0 r^{-n} m_i^{-\frac{4\sigma}{n-2\sigma}} -C_{0}\|a_{i}\|_{L^{\infty}(B_{1})}r^{4\sigma-n} \\ & \quad +
\frac{1}{C_1} \begin{cases}
\sigma a_i(y_i)\ln (rm^{\frac{2}{n-2\sigma}}_i) & \  n=4\sigma,\\
\sigma a_i(y_i)m^{\frac{2(n-4\sigma)}{n-2\sigma}}_{i} & \ 4\sigma< n<4\sigma+2,\\
\sigma a_i(y_i)m^{\frac{2(n-4\sigma)}{n-2\sigma}}_{i}+\frac{(\sigma+1)}{2n}\Delta a_{i}(y_{i})\ln (rm^{\frac{2}{n-2\sigma}}_i)  & \ n=4\sigma+2,\\
\sigma a_i(y_i)m^{\frac{2(n-4\sigma)}{n-2\sigma}}_{i}
+\frac{(\sigma+1)}{2n}\Delta a_{i}(y_{i})m^{\frac{2(n-4\sigma-2)}{n-2\sigma}}_{i}& 4\sigma+2<n<6\sigma+2,\\
\beta_{i}-C_{0}\|a_{i}\|_{B_{1}} \ln (rm^{\frac{2}{n-2\sigma}}_i)& \  n=6\sigma+2,\\
\beta_{i}-C_{0}\|a_{i}\|_{B_{1}} m_i^{\frac{2(n-6\sigma-2)}{n-2\sigma}}  & \  n>6\sigma+2,\\
\end{cases}
\end{align*}
where $\beta_i:=\sigma a_i(y_i) m_i^{\frac{2(n-4\sigma)}{n-2\sigma}} +\frac{(\sigma+1)}{2n}\Delta a_i(y_i) m_i^{\frac{2(n-4\sigma-2)}{n-2\sigma}}$, $\|a_i\|_{B_1}:=\|a_i\|_{L^\infty(B_1)}\|\nabla^2 a_i\|_{L^\infty(B_1)}+\|\nabla ^4 a_i\|_{L^\infty(B_1)}$, $C_0>0$ depends only on $n,\sigma, A_0,A_1$,$\rho$ and independent of $r$ if $i$ is sufficiently large, and $C_1>0$ depends only on $n$ and $\sigma$.
\end{prop}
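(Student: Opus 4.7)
The plan is to apply Proposition~\ref{prop:pohozaev} at $Y_i=(y_i,0)$, which gives $m_i^2 P_\sigma(Y_i,r,U_i)=-m_i^2 Q_\sigma(Y_i,r,U_i,p_i)$, and then split $-Q_\sigma$ into three parts: (I) the $\tau_i$-piece $(\tfrac{n}{p_i+1}-\tfrac{n-2\sigma}{2})\int u_i^{p_i+1}$, which is non-negative whenever $\tau_i>0$ and may be discarded in the lower bound; (II) the potential integral
$I_a:=\int_{B_r(y_i)}(\sigma a_i(y)+\tfrac12(y-y_i)\cdot\nabla a_i(y))\,u_i^2\,dy$;
and (III) the boundary integrals on $\pa B_r(y_i)$. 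Part (III) will be controlled by the sharp pointwise bound $u_i(y)\le C m_i^{-1}|y-y_i|^{2\sigma-n}$ from Proposition~\ref{prop:up-1}, yielding exactly the two negative error terms $-C_0 r^{-n}m_i^{-4\sigma/(n-2\sigma)}$ and $-C_0\|a_i\|_\infty r^{4\sigma-n}$ in the conclusion.

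The crux is the lower bound for $m_i^2 I_a$. Writing $z=y-y_i$, I will Taylor-expand
\[
\sigma a_i(y)+\tfrac12 z\cdot\nabla a_i(y)=\sigma a_i(y_i)+(\sigma+\tfrac12)\,z\cdot\nabla a_i(y_i)+\tfrac{\sigma+1}{2}\,z^T D^2 a_i(y_i)z+R(z),
\]
with $R(z)=O(\|\nabla^3 a_i\|_\infty|z|^3)$ (pushed to fourth order when needed). For each term I will evaluate the moment $\int|z|^s u_i^2\,dy$ using Corollary~\ref{cor:conclusion}: the scalar piece contributes $\sigma a_i(y_i)\int u_i^2\sim\sigma a_i(y_i)\,m_i^{-4\sigma/(n-2\sigma)}$ for $n>4\sigma$ (with $\ln m_i$ at $n=4\sigma$), which after multiplying by $m_i^2$ is the $\sigma a_i(y_i)\,m_i^{2(n-4\sigma)/(n-2\sigma)}$ term. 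The quadratic piece becomes, by the near-radial symmetry of $u_i$, $\tfrac{\sigma+1}{2n}\Delta a_i(y_i)\int|z|^2 u_i^2\sim\tfrac{\sigma+1}{2n}\Delta a_i(y_i)\,m_i^{-(4\sigma+4)/(n-2\sigma)}$ for $n>4\sigma+2$ (with a $\ln$ at $n=4\sigma+2$), matching the $\tfrac{\sigma+1}{2n}\Delta a_i(y_i)\,m_i^{2(n-4\sigma-2)/(n-2\sigma)}$ term. The dimensional thresholds $n=4\sigma$ and $n=4\sigma+2$ coincide with the integrability borderlines $s+4\sigma=n$ of Corollary~\ref{cor:conclusion}.

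The hard part will be controlling the odd Taylor moments $\int z\,u_i^2$ and $\int z|z|^2\,u_i^2$ produced by the linear and cubic pieces. These vanish identically when $u_i$ is replaced by the radial bubble $m_i\theta_1(\lambda_i z)$, so I will estimate the deficit through the refined expansions in Lemmas~\ref{lem:s-1}--\ref{lem:s-2} and Corollary~\ref{cor:sym}. After the scaling $w=\lambda_i z$, the resulting error will be of order $\|\nabla^2 a_i\|_\infty\,m_i^{-2+2(n-4\sigma-2)/(n-2\sigma)}$ times a weighted integral whose convergence borderline is precisely $n=6\sigma+2$; together with the pointwise bounds on $a_i(y_i)$ and $|\nabla a_i(y_i)|$ from Lemma~\ref{lem:a_i}, this will be absorbed into $C_0\|a_i\|_{B_1}\ln m_i$ at $n=6\sigma+2$ and into $C_0\|a_i\|_{B_1}\,m_i^{2(n-6\sigma-2)/(n-2\sigma)}$ for $n>6\sigma+2$. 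The cubic remainder $R(z)$ will be treated analogously by extending the Taylor expansion one more step and using the same symmetry to kill the cubic moment, leaving an $\|\nabla^4 a_i\|_\infty\int|z|^4 u_i^2$ error of compatible size. Assembling all pieces through the case split $n=4\sigma$, $4\sigma<n<4\sigma+2$, $n=4\sigma+2$, $4\sigma+2<n<6\sigma+2$, $n=6\sigma+2$, $n>6\sigma+2$ will then yield the stated dimension-dependent lower bound.
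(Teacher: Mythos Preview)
Your proposal is correct and follows essentially the same strategy as the paper, but the order of operations differs in a way worth noting. The paper does \emph{not} Taylor-expand $a_i$ against $u_i^2$ directly; instead it first replaces $u_i$ by the exact bubble $\theta_{\ell_i}$ in the full integrand $((y-y_i)_k\partial_k u_i+\tfrac{n-2\sigma}{2}u_i)\,a_i u_i$ (calling the result $\hat{\mathcal E}_i$), bounds the single discrepancy $|\mathcal E_i-\hat{\mathcal E}_i|$ in one stroke via Corollary~\ref{cor:sym} with a prefactor $\|a_i\|_{L^\infty}$, and only then integrates by parts and Taylor-expands $a_i$ against the \emph{radial} $\theta_{\ell_i}^2$, so that all odd moments and the traceless Hessian part vanish exactly. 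Your route---Taylor-expand first, then kill odd moments of $u_i^2$ term by term via $u_i^2-\theta_{\ell_i}^2$---is equivalent but slightly more laborious: it forces you to estimate separately the deficits $\int z(u_i^2-\theta_{\ell_i}^2)$, $\int(z_kz_l-\tfrac{|z|^2}{n}\delta_{kl})(u_i^2-\theta_{\ell_i}^2)$, etc., and also to replace $u_i^2$ by $\theta_{\ell_i}^2$ in the main even moments so that the constants match. In particular, the paper never invokes Lemma~\ref{lem:a_i} here; the bound on $|\nabla a_i(y_i)|$ you plan to use is unnecessary once the global replacement $u_i\to\theta_{\ell_i}$ is made, since the linear term then disappears exactly in $\hat{\mathcal E}_i$ and its contribution is already absorbed in the single error $|\mathcal E_i-\hat{\mathcal E}_i|\le C\|a_i\|_{L^\infty}\cdot(\text{Cor.~\ref{cor:sym} bound})$. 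Both routes produce the same dimensional thresholds and the same $\|a_i\|_{B_1}$ error for $n\ge 6\sigma+2$; the paper's is just a cleaner packaging.
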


\begin{proof}
By Proposition \ref{prop:pohozaev}, we have
\[
P_\sigma(Y_i,r, U_i)=-Q_\sigma(Y_i,r,U_i,p_i)=-\int_{B_r(y_i)}\big((y -y_i)_k\pa_k u_i+\frac{n-2\sigma}{2} u_i\big) a_i u_i\,\ud y +\mathcal{N}(r,u_i),
\]
where
\[
\mathcal{N}(r,u_i)= \frac{(n-2\sigma)\tau_i}{2 (p_i+1)}\int_{B_r(y_i)} u_i(y)^{p_i+1}\,\ud y -\frac{r}{p_i+1} \int_{\pa B_r(y_i)} u_i^{p_i+1}\,\ud S.
\]
By Proposition \ref{prop:up-1},
\be\label{eq:80}
m_i^2 \mathcal{N}(r,u_i) \ge - C r^{-n} m_i^{1-p_i}.
\ee

By change of variables $z=\ell_i (y-y_i)$ with $\ell_i=m_i^{\frac{p_i-1}{2\sigma}}$, we have
\begin{align*}
\mathcal{E}_i(r):&=-m_i^2\int_{B_r(y_i)}((y -y_i)_k\pa_k u_i+\frac{n-2\sigma}{2} u_i) a_i u_i\,\ud y\\&= -m_i^{4-\frac{n(p_i-1)}{2\sigma}}  \int_{B_{\ell_ir}}(z_k\pa_k \phi_i+\frac{n-2\sigma}{2} \phi_i) a_i(y_i+\ell_i^{-1}z) \phi_i\,\ud z,
\end{align*}
where $\phi_i(z)=m_i^{-1}u_{i}(\ell_{i}^{-1} z+y_i)$.
Let
\begin{align*}
\hat{\mathcal{E}}_i(r):= -m_i^{4-\frac{n(p_i-1)}{2\sigma}}  \int_{B_{\ell_ir}}(z_k\pa_k \theta_1+\frac{n-2\sigma}{2} \theta_1) a_i(y_i+\ell_i^{-1}z) \theta_1\,\ud z.
\end{align*}
Making use of Corollary \ref{cor:sym} and Lemma \ref{lem:tau-1}, we have
\begin{align*}
&|\mathcal{E}_i(r)-\hat{\mathcal{E}}_i(r)|\le C \|a_i\|_{L^\infty(B_1)}  m_i^{2-\frac{4\sigma}{n-2\sigma}} \int_{B_{\ell_ir}} \sum_{j=0}^1|\nabla ^{j} (\phi_i-\theta_1)|(z)(1+|z|)^{2\sigma-n+j}\,\ud z\\&
\le C \|a_i\|_{L^\infty(B_1)} \begin{cases}
r^{2\sigma}& \quad \mbox{if } n<4\sigma+2,\\
\max\{\|\nabla^2 a_i \|_{L^\infty(B_1)} r^{\sigma}, r^{2\sigma}\}  & \quad \mbox{if } n=4\sigma+2,\\
\max\{\|\nabla^2 a_i \|_{L^\infty(B_1)} r^{6\sigma+2-n}, r^{2\sigma}\}&  \quad \mbox{if } 4\sigma+2<n<6\sigma+2,\\
\max\{\|\nabla^2 a_i \|_{L^\infty(B_1)}\ln(rm_i^{\frac{2}{n-2\sigma}}), r^{2\sigma}\} & \quad \mbox{if } n=6\sigma+2,\\
\max\{\|\nabla^2 a_i \|_{L^\infty(B_1)} m_i^{\frac{2(n-6\sigma-2)}{n-2\sigma}}, r^{2\sigma}\} & \quad \mbox{if } n>6\sigma+2,\\
\end{cases}
\end{align*}
where $C>0$ depends only on $n,\sigma, A_0$ and $A_1$. Next, by direction computations we see that

\begin{align*}
\hat{\mathcal{E}}_i(r)&=-m_i^2\int_{B_r}(y_k\pa_k \theta_{\ell_i}+\frac{n-2\sigma}{2}  \theta_{\ell_i}) a_i(y_i+y)  \theta_{\ell_i} \,\ud y\\&
\ge m_i^2 \int_{B_r} (\frac12 y _k\pa_k a_i (y_i+y)+\sigma a_i(y_i+y)) \theta_{\ell_i}^2\,\ud y-C\|a_{i}\|_{L^{\infty}(B_{1})}r^{4\sigma-n} \\&
\ge   m_i^2 \int_{B_r}\Big (\sigma a_i(y_i) +(\sigma+\frac12) y_k\pa_k a_i(y_i)+ (\frac12+\frac{\sigma}{2} )\pa_{kl} a_i(y_i) y_ky_l\\&\qquad +(\frac14 +\frac{\sigma}{6})\pa_{jkl}a_i(y_i)y_j y_ky_l \Big)
\theta_{\ell_i}^2\,\ud y-C m_i^2\|\nabla^{4}a_{i}\|_{L^{\infty}(B_{1})} \int_{B_r} |y|^4\theta_{\ell_i}^2\,\ud y-C\|a_{i}\|_{L^{\infty}(B_{1})}r^{4\sigma-n}\\&
= m_i^2 \int_{B_r} (\sigma a_i(y_i)+\frac{(\sigma+1)}{2n} \Delta a_i(y_i) |y|^2)\theta_{\ell_i}^2\,\ud y\\&\qquad-C m_i^2\|\nabla^{4}a_{i}\|_{L^{\infty}(B_{1})} \int_{B_r} |y|^4\theta_{\ell_i}^2\,\ud y-C\|a_{i}\|_{L^{\infty}(B_{1})}r^{4\sigma-n},
\end{align*}
\begin{align*}
&m_i^2 \int_{B_r} (\sigma a_i(y_i)+\frac{(\sigma+1)}{2n} \Delta a_i(y_i) |y|^2)\theta_{\ell_i}^2\,\ud y \ge \\&
\frac{1}{C} \begin{cases}
\sigma a_i(y_i)\ln (rm^{\frac{2}{n-2\sigma}}_i) & \ n=4\sigma,\\
\sigma a_i(y_i)m^{\frac{2(n-4\sigma)}{n-2\sigma}}_{i} & 4\sigma<n<4\sigma+2,\\
\sigma a_i(y_i)m^{\frac{2(n-4\sigma)}{n-2\sigma}}_{i}+\frac{(\sigma+1)}{2n}\Delta a_{i}(y_{i})\ln (rm^{\frac{2}{n-2\sigma}}_i) & \  n=4\sigma+2,\\
\sigma a_i(y_i)m^{\frac{2(n-4\sigma)}{n-2\sigma}}_{i}
+\frac{(\sigma+1)}{2n}\Delta a_{i}(y_{i})m^{\frac{2(n-4\sigma-2)}{n-2\sigma}}_{i} &
n>4\sigma+2
\end{cases}
\end{align*}
and
\begin{align*}
& m_i^2 \int_{B_r} |y|^4\theta_{\ell_i}^2\,\ud y\le C  \begin{cases}
r^{4\sigma+4-n} & \ n<4\sigma+4\\
 \ln (rm^{\frac{2}{n-2\sigma}}_i)  & \  n=4\sigma+4,\\
m^{\frac{2(n-4\sigma-4)}{n-2\sigma}}_i  & \  n>4\sigma+4,
\end{cases}
\end{align*}
where $C>0$ depends only on $n,\sigma$ and $\sup_i\|\nabla ^4 a_i\|_{L^\infty(B_1)}$.
Since $4\sigma+4>6\sigma+2$ and
\[
-m_i^2Q_\sigma(Y_i,r,U_i,p_i)\ge \hat{\mathcal{E}}_i(r)-|\mathcal{E}_i(r)-\hat{\mathcal{E}}_i(r)|+ m_i^2 \mathcal{N}(r,u_i),
\]
the proposition follows immediately.

\end{proof}

By Proposition \ref{prop:up-1} and local estimates in \cite{JLX}, after passing to a subsequence we have
\[
|\pa_y^k(U_i(Y_i) U_i(Y)- U(Y))|+|s^{1-2\sigma}\pa_s(U_i(Y_i) U_i(Y)- U(Y))| \to 0 \quad \mbox{in } C^{\al}(\B_{1}^+\setminus \B_{\rho}^+)
\]
for $k=0,1,2$, some $\al\in (0,1)$ and all $\rho>0$, where $0\le U\in W^{1,2}(s^{1-2\sigma}, \B_1^+\setminus \B_{\rho}^+)$ for all $\rho>0$ satisfies
\begin{equation}\label{eq:toprove-1}
\begin{cases}
\mathrm{div}(t^{1-2\sigma}\nabla U)=0& \quad \mbox{in }\mathcal{B}_1^+,\\
\frac{\pa }{\pa \nu^\sigma} U=a U& \quad \mbox{on } \pa' \mathcal{B}^+_{1}\setminus\{0\}
\end{cases}
\end{equation}
in weak sense. We will still denote the subsequence as $U_i$. Notice that for every $0<r<1$
\be\label{eq:78}
m_i^2 P_\sigma(Y_i,r, U_i)\to P_\sigma(0,r, U) \quad \mbox{as }i\to \infty.
\ee

\begin{prop}\label{prop:is-iss} Assume as Lemma \ref{lem:s-harnack}. Suppose that for large $i$
\begin{itemize}
\item[(i)] $\beta_i\ge 0$ if $4\sigma+2\le n<6\sigma+2$;
\item[(ii)] $ \beta_i\ge (C_0+1) \| a_i\|_{B_1}\ln m_i$ if $n=6\sigma+2$;
\item[(iii)]  $ \beta_i \ge  (C_0+1) \| a_i\|_{B_1}m_i^{\frac{2(n-6\sigma-2)}{n-2\sigma}}  $  if $n>6\sigma+2$;
\end{itemize}
where $\beta_i:=\sigma a_i(y_i) m_i^{\frac{2(n-4\sigma)}{n-2\sigma}} +\frac{(\sigma+1)}{2n}\Delta a_i(y_i) m_i^{\frac{2(n-4\sigma-2)}{n-2\sigma}}$ if $n>4\sigma+2$ and $\beta_i:=\sigma a_i(y_i) m_i^{\frac{2(n-4\sigma)}{n-2\sigma}} +\frac{(\sigma+1)}{2n}\Delta a_i(y_i) \ln m_i$ if $n=4\sigma+2$, $m_i=u_i(y_i)$, and $C_0$ is the constant in Proposition \ref{prop:sign} with $\rho=1$,  then, after passing to a subsequence, $y_i\to 0$ is an isolated simple blow up point of $\{u_i\}$.
\end{prop}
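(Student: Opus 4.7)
The plan is to argue by contradiction. Suppose, passing to a subsequence, that $y_i \to 0$ is isolated but not isolated simple. Then $\bar w_i$ admits a second critical point $\mu_i \to 0^+$ strictly beyond the bubble-scale critical point $r_i^* \sim m_i^{-(p_i-1)/(2\sigma)}$, with $\mu_i/r_i^* \to \infty$. Rescale:
\[
\tilde u_i(y) := \mu_i^{2\sigma/(p_i-1)} u_i(\mu_i y + y_i), \qquad \tilde a_i(y) := \mu_i^{2\sigma} a_i(\mu_i y + y_i).
\]
Then $\tilde u_i$ solves \eqref{eq:mainseq} with potential $\tilde a_i$ and blow-up magnitude $\tilde m_i := \tilde u_i(0) = \mu_i^{2\sigma/(p_i-1)} m_i \to \infty$. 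Since $\bar w_{\tilde u_i}(s) = \bar w_i(\mu_i s)$, the bubble-scale critical point of $\bar w_{\tilde u_i}$ sits at $s_i^* = r_i^*/\mu_i \to 0$, the former $\mu_i$ becomes a critical point at $s=1$, and there are no critical points of $\bar w_{\tilde u_i}$ between them. Hence $0$ is an isolated simple blow-up point for $\tilde u_i$ with $\rho_0 = 1/2$.

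Having reduced to the isolated simple regime, I would apply Proposition \ref{prop:sign} to $\tilde u_i$ at some fixed $r_0 \in (0, 1/4)$. A pivotal observation is that under the rescaling
\[
\tilde\beta_i = \mu_i^{n-2\sigma}\beta_i, \qquad \|\tilde a_i\|_{B_1}\tilde m_i^{2(n-6\sigma-2)/(n-2\sigma)} \lesssim \mu_i^{n-2\sigma}\|a_i\|_{B_1} m_i^{2(n-6\sigma-2)/(n-2\sigma)},
\]
and the remaining error terms in Proposition \ref{prop:sign} are similarly homogeneous of $\mu_i$-degree $n - 2\sigma$. Consequently each of the hypotheses (i)--(iii) on $\beta_i$ transfers verbatim into the corresponding inequality for $\tilde\beta_i$, with the constant $C_0 + 1$ in (ii)--(iii) absorbing exactly the $C_0\|\cdot\|_{B_1}$ prefactor of the error. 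Since additionally $\tilde m_i^{-4\sigma/(n-2\sigma)} \to 0$ and $\|\tilde a_i\|_{L^\infty(B_1)} = O(\mu_i^{2\sigma}) \to 0$, Proposition \ref{prop:sign} yields
\[
\liminf_{i\to\infty}\tilde m_i^2 P_\sigma(0, r_0, \tilde U_i) \ge 0.
\]

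Next I would pass to the limit. By Proposition \ref{prop:up-1} applied to $\tilde u_i$ and the linear theory of \cite{JLX}, after a further subsequence,
\[
\tilde m_i \tilde U_i \to U \quad \text{in } C^2_{\mathrm{loc}}(\overline{\B_1^+}\setminus\{0\}),
\]
where $U$ is a nonnegative weak solution of $\mathrm{div}(s^{1-2\sigma}\nabla U) = 0$ in $\B_1^+$ with $\frac{\pa U}{\pa \nu^\sigma} = a(0)U$ on $\pa'\B_1^+\setminus\{0\}$, singular at $0$. Proposition \ref{prop:bocher} gives $U = AG + H$ with $A > 0$ on $\B_\tau^+ \setminus \{0\}$. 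Continuity of $P_\sigma$ away from the origin yields $\lim_i \tilde m_i^2 P_\sigma(0, r_0, \tilde U_i) = P_\sigma(0, r_0, U)$. Inserting the expansion $G(X) = c(n,\sigma)|X|^{2\sigma-n} + E(X)$ from Proposition \ref{prop:greenexitence} into the definition of $P_\sigma$ and carrying out the hemispherical integration, the pure singular contributions cancel and one is left with
\[
P_\sigma(0, r_0, U) = -\gamma(n,\sigma)\, A\, H(0) + O(r_0^{2\sigma})
\]
for some $\gamma(n,\sigma) > 0$. A maximum-principle argument (Remark \ref{rem:mp}) applied to $U - (A + \va) G$ for small $\va > 0$ forces $H(0) > 0$. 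Choosing $r_0$ small therefore yields $P_\sigma(0, r_0, U) < 0$, contradicting the lower bound above, and completing the proof.

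The main obstacle I anticipate is the scaling verification in the second step: checking case by case, in the dimension ranges $4\sigma \le n < 4\sigma+2$, $n = 4\sigma+2$, $4\sigma+2 < n < 6\sigma+2$, $n = 6\sigma+2$, and $n > 6\sigma+2$, that $\tilde\beta_i$ and all the error bounds in Proposition \ref{prop:sign} are really of the same $\mu_i^{n-2\sigma}$-degree, and that the logarithmic cases still transfer despite $\ln\tilde m_i = \ln m_i + \frac{2\sigma}{p_i-1}\ln\mu_i \ne \ln m_i$. A secondary subtlety is justifying $H(0) > 0$ rigorously; the hypothesis $a \ge 0$ and Remark \ref{rem:mp} are exactly what is needed.
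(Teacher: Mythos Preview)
Your overall strategy---rescale by the second critical radius $\mu_i$, reduce to an isolated simple blow-up, compare the Pohozaev integral of the limit against Proposition~\ref{prop:sign}---is exactly the paper's. However, two concrete points need repair.

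First, the limiting boundary condition is wrong. Since $\tilde a_i(y)=\mu_i^{2\sigma}a_i(\mu_i y+y_i)$ and $\mu_i\to 0$, you have $\tilde a_i\to 0$ uniformly on compacta, not $\tilde a_i\to a(0)$. So the limit $U$ satisfies $\partial_{\nu^\sigma}U=0$ on $\pa'\R^{n+1}_+\setminus\{0\}$, and the B\^ocher decomposition reads $U(Y)=A|Y|^{2\sigma-n}+H(Y)$ with $H$ solving the homogeneous problem on the \emph{whole} half-space (the rescaled domain is $\B^+_{1/\mu_i}$). A Liouville/maximum-principle argument then forces $H\equiv\mathrm{const}\ge 0$.

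Second, and this is the real gap: your maximum-principle argument does not show $H(0)>0$. Applying Remark~\ref{rem:mp} to $U-(A+\va)G$ gives no contradiction, since near the origin this function is negative for every $\va>0$; nothing rules out $H\equiv 0$. The paper closes this gap by using the one piece of information you set up but never exploited: the critical point of $\bar w_{\tilde u_i}$ at $s=1$. Passing to the limit in
\[
\frac{\ud}{\ud r}\Big\{r^{2\sigma/(p_i-1)}\tilde u_i(0)\,\overline{\tilde u}_i(r)\Big\}\Big|_{r=1}=0
\]
and using $\tilde u_i(0)\overline{\tilde u}_i(r)\to Ar^{2\sigma-n}+H$ gives $\frac{\ud}{\ud r}\big(r^{(n-2\sigma)/2}(Ar^{2\sigma-n}+H)\big)\big|_{r=1}=0$, i.e.\ $A=H>0$. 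With $H$ constant, the Pohozaev integral is computed exactly:
\[
P_\sigma(0,\delta,U)=-\tfrac{(n-2\sigma)^2}{2}A^2\int_{\pa''\B_1^+}t^{1-2\sigma}<0,
\]
with no $O(\delta^{2\sigma})$ remainder to worry about.

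Your scaling analysis of $\tilde\beta_i$ is on the right track; the paper carries this out explicitly for $n\ge 6\sigma+2$ (the cases $n<6\sigma+2$ need no extra hypothesis since $a_i\ge 0$ already makes the leading terms nonnegative). Note that $\tilde m_i=\mu_i^{2\sigma/(p_i-1)}m_i$ gives $\tilde\beta_i=(1+o(1))\mu_i^{n-2\sigma}\beta_i$ and $\|\tilde a_i\|_{B_1}\le \mu_i^{4\sigma+2}\|a_i\|_{B_1}$, so for $n=6\sigma+2$ one also picks up a harmless extra $\ln\mu_i^{2\sigma/(p_i-1)}$ term, and the residual $\|\tilde a_i\|_{L^\infty}\delta^{4\sigma-n}$ term vanishes because $\|\tilde a_i\|_{L^\infty}=O(\mu_i^{2\sigma})$.
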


\begin{proof} By Proposition \ref{prop:blowupbubble}, $r^{2\sigma/(p_i-1)}\overline u_i(r)$ has precisely
one critical point in the interval $0<r<r_i:=R_iu_i(y_i)^{-\frac{p_i-1}{2\sigma}}$.
If the proposition were wrong, let $ \mu_i\ge r_i$ be the second critical point of $r^{2\sigma/(p_i-1)}\overline u_i(r)$. Then there must hold
\be\label{eq:is-1}
\lim_{i\to \infty}\mu_i=0.
\ee

Without loss of generality, we assume that $y_i=0$. Set
\[
\phi_i(y):=\mu_i^{2\sigma/(p_i-1)}u_i(\mu_i y),\quad y\in \R^n.
\]
Clearly, $\phi_i$ satisfies
\[
\begin{split}
(-\Delta)^\sigma \phi_i&=\mu_i^{2\sigma}a_i(\mu_iy)\phi_i+\phi_i^{p_i},
\\
|y|^{2\sigma/(p_i-1)}\phi_i(y)&\leq \tilde{C},\quad |y|<1/\mu_i,
\\
\lim_{i\to \infty}\phi_i(0)&=\infty,
\end{split}
\]
\[
r^{2\sigma/(p_i-1)}\overline \phi_i(r)\mbox{ has precisely one critical point in } 0<r<1,
\]
and
\[
\frac{\mathrm{d}}{\mathrm{d}r}\left\{ r^{2\sigma/(p_i-1)}\overline \phi_i(r)\right\}\Big|_{r=1}=0,
\]
where $\overline \phi_i(r)=|\pa B_r|^{-1}\int_{\pa B_r}\phi_i$. Denote $\tilde a_i(y)$ by $\mu_i^{2\sigma}a_i(\mu_iy)$.

Therefore, $0$ is an isolated simple blow up point of $\phi_i$. Let $\Phi_i(Y)$ be the extension of $\phi_i(y)$ in the upper half space.
Then Lemma \ref{lem:s-harnack}, Proposition \ref{prop:up-1}, Proposition \ref{prop:bocher} and estimates for linear equations in \cite{JLX} imply that
\be \label{eq:60}
\Phi_i(0)\Phi_i(Y)\to G(Y)=A|Y|^{2\sigma-n}+H(Y)
\quad \mbox{in } C^{\al}_{loc}(\overline{\R^{n+1}_+}\setminus \{0\})\cap C^2_{loc}(\R^{n+1}_+),
\ee
and
\be\label{5.3new}
\phi_i(0)\phi_i(y)\to G(y,0)=A|y|^{2\sigma-n}+H(y,0)
\quad \mbox{in } C^2_{loc}(\R^{n}\backslash\{0\})
\ee
as $i\to \infty$, where $A>0$, $H(Y)$ satisfies
\[
\begin{cases}
\mathrm{div}(t^{1-2\sigma}\nabla H)=0\quad &\mbox{in }\R^{n+1}_+,\\
\frac{\pa }{\pa \nu^\sigma} H(y,0)=0\quad &\mbox{for } y\in \R^{n},
\end{cases}
\]
in weak sense.

Note that $G(Y)$ is nonnegative, we have $\liminf_{|Y|\to \infty}H(Y)\geq 0$. It follows from the weak maximum principle and the
Harnack inequality that $H(y)\equiv H\geq 0$ is a constant. Since
\[
\frac{\mathrm{d}}{\mathrm{d}r}\left\{ r^{2\sigma/(p_i-1)}\phi_i(0)\overline \phi_i(r)\right\}\Big|_{r=1}=
\phi_i(0)\frac{\mathrm{d}}{\mathrm{d}r}\left\{ r^{2\sigma/(p_i-1)}\overline \phi_i(r)\right\}\Big|_{r=1}=0,
\]
we have, by sending $i$ to $\infty$ and making use of \eqref{5.3new}, that
\[A=H>0.\]

By \eqref{eq:60} and the interior estimates for linear equation in \cite{JLX}, we have
\be \label{eq:sign-1}
\liminf_{i\to \infty}\Phi_i(0)^2 P_\sigma(0,\delta, \Phi_i)= P_\sigma(0,\delta, G)=-\frac{(n-2\sigma)^2}{2}A^2\int_{\pa''\B_1^+}t^{1-2\sigma}< 0.
\ee

If $n<6\sigma+2$, by Proposition \ref{prop:sign} and item (i) in the assumptions we have
\[
\liminf_{\delta\to 0}\liminf_{i\to \infty}  \Phi_i(0)^2 P_\sigma(0,\delta, \Phi_i) \ge 0.
\]
This contradicts to \eqref{eq:sign-1}. Hence $y_i\to 0$ has to be an isolated simple blow up point of $\{u_i\}$ upon passing to a subsequence.

If $n\ge 6\sigma+2$, let
\begin{align*}
\tilde \beta_i:&=\sigma \tilde a_i(y_i) \Phi_i(0)^{\frac{2(n-4\sigma)}{n-2\sigma}} +\frac{(\sigma+1)}{2n}\Delta \tilde a_i(y_i) \Phi_i(0)^{\frac{2(n-4\sigma-2)}{n-2\sigma}}\\&
=(1+o(1)) \mu_i^{n-2\sigma} \beta_i.
\end{align*}
Since $\|\tilde a_i\|_{C^4(B_1)}\le \mu_{i}^{2\sigma} A_0$ and $\|\tilde a_i\|_{B_1}\le \mu_i^{4\sigma+2} \|a_i\|_{B_1} $, we have
\begin{align}
&\tilde \beta_i -C_0\|\tilde a_i\|_{B_1} \ln \Phi_i(0)-C_{0}\|\tilde{a}_{i}\|_{L^{\infty}(B_{1})}\delta^{4\sigma-n} \nonumber \\&
\ge  (1+o(1)) \mu_i^{4\sigma+2} \beta_i -C_0 \mu_i^{4\sigma+2} \|a_i\|_{B_1}  \ln m_i-C_0\|\tilde a_i\|_{B_1}\ln \mu_i^{\frac{2\sigma}{p_i-1}} -C_{0}\mu^{2\sigma}_{i}\|a_{i}\|_{L^{\infty}(B_{1})}\delta^{4\sigma-n}\nonumber \\&
\ge  (1+o(1)) \mu_i^{4\sigma+2} (C_0+1) \|a_i\|_{B_1}\ln m_i -C_0 \mu_i^{4\sigma+2} \|a_i\|_{B_1}  \ln m_i-C_0\|\tilde a_i\|_{B_1}\ln \mu_i^{\frac{2\sigma}{p_i-1}} \nonumber  \\&
~~-C_{0}\mu^{2\sigma}_{i}\|a_{i}\|_{L^{\infty}(B_{1})}\delta^{4\sigma-n}
\ge 0 \label{eq:sl-1}
\end{align}
for $n=6\sigma+2$ and
\begin{align}
&\tilde \beta_i -C_0\|\tilde a_i\|_{B_1} \Phi_i(0)^{\frac{2(n-6\sigma-2)}{n-2\sigma}}-C_{0}\|\tilde{a}_{i}\|_{L^{\infty}(B_{1})}\delta^{4\sigma-n} \nonumber
\\& \ge  (1+o(1)) \mu_i^{n-2\sigma} \beta_i -C_0(1+o(1)) \mu_i^{n-2\sigma} \|a_i\|_{B_1}   m_i^{\frac{2(n-6\sigma-2)}{n-2\sigma}}-C_{0}\mu^{2\sigma}_{i}\|a_{i}\|_{L^{\infty}(B_{1})}\delta^{4\sigma-n}\nonumber
 \nonumber\\&
\ge  (1+o(1)) \mu_i^{n-2\sigma} (C_0+1)  \|a_i\|_{B_1} m_i^{\frac{2(n-6\sigma-2)}{n-2\sigma}} -C_0(1+o(1)) \mu_i^{n-2\sigma} \|a_i\|_{B_1}   m_i^{\frac{2(n-6\sigma-2)}{n-2\sigma}}\nonumber\\&
~~-C_{0}\mu^{2\sigma}_{i}\|a_{i}\|_{L^{\infty}(B_{1})}\delta^{4\sigma-n}
\ge 0
\label{eq:sl-2}
\end{align}
for $n> 6\sigma+2$. By Proposition \ref{prop:sign}
\[
\liminf_{\delta\to 0}\liminf_{i\to \infty}  \Phi_i(0)^2 P_\sigma(0,\delta, \Phi_i) \ge 0.
\]
This contradicts to \eqref{eq:sign-1}. Hence $y_i\to 0$ has to be an isolated simple blow up point of $\{u_i\}$ upon passing to a subsequence.

Therefore, we complete the proof of Proposition \ref{prop:is-iss}.

\end{proof}

\begin{rem}\label{rem:confirm}Note that

\begin{enumerate}
\item From \eqref{eq:sl-1} and \eqref{eq:sl-2}, we say assumptions (ii) and (iii) in Proposition \ref{prop:is-iss} are scaling invariant.
\item Either $a_i>\frac{1}{C}$ in $B_1$ for some $C>0$ or $a_i\ge 0$ and $\Delta a_i\ge 1/C $  on $\{x:a_i(x)<d\}\cap B_2$ for some constant $d>0$ when  $n\ge 4\sigma+2$,  then the assumptions (ii) and (iii) in Proposition \ref{prop:is-iss} hold automatically.
\end{enumerate}
\end{rem}

\section{Proof of the main theorems}

\label{sec:7}

Let $u\in C^2(B_3)\cap \mathcal{L}_\sigma(\R^n)$ be a solution of
\be \label{eq:mode}
(-\Delta )^\sigma u-a(x) u=u^{p} \quad \mbox{in }B_3, \quad u>0 \quad \mbox{in }\R^n,
\ee
where $a(x)\in C^2(B_3)$ and $1<p\le \frac{n+2\sigma}{n-2\sigma}$.

\begin{prop}\label{prop:bubledec} Assume as above. Then for any $0<\va<1$ and $R>1$, there exists large positive constants $C_1 $ and $ C_2$ depending only on $n,\sigma, \|a\|_{C^2(B_2)}$, $\va$ and $R$ such that the following statement holds. If
\[
\max_{\bar B_2} dist(x,\pa B_2)^{\frac{n-2\sigma}{2}} u(x)\ge C_1,
\]
then $p\ge \frac{n+2\sigma}{n-2\sigma}-\va$ and a finite set $S$ of local maximum points of $u$ in $B_2$ such that:
\begin{itemize}
\item[(i).] For any $y\in S$, it holds
\[
\|u(y)^{-1}u(u(y)^{\frac{p-1}{2\sigma}} x+y)-(1+|x|^2)^{\frac{2\sigma-n}{2}}\|_{C^2(B_{2R})} < \va,
\]
where $\bar c>0$ depends only on $n,\sigma$.
\item[(ii).] If $y_1,y_2\in S$ and $y_1\neq y_2$, then
\[
B_{R u(y_1)^{(1-p)/2\sigma}}(y_1) \cap B_{R u(y_2)^{(1-p)/2\sigma}}(y_2) =\emptyset.
\]
\item[(iii).] $u(x)\le C_2 dist(x,S)^{-2\sigma/(p-1)}$ for all $x\in B_2$.
\end{itemize}

\end{prop}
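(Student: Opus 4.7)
The plan is to argue by contradiction, combining the bubble-selection machinery of Schoen together with Lemma \ref{lem:select} and the Caffarelli--Silvestre extension to reduce every extraction step to a Liouville problem for $(-\Delta)^\sigma$ on $\R^n$ as in the proof of Proposition \ref{prop:blowupbubble}. Suppose the conclusion fails: then there is a sequence $u_i\in C^2(B_3)\cap \mathcal L_\sigma(\R^n)$ solving $(-\Delta)^\sigma u_i - a_i u_i = u_i^{p_i}$ with $\|a_i\|_{C^2(B_2)}\le A_0$ and $p_i\in (1,\frac{n+2\sigma}{n-2\sigma}]$, with $M_i:=\max_{\bar B_2}\mathrm{dist}(x,\partial B_2)^{(n-2\sigma)/2}u_i(x)\to\infty$, but for which some item in (i)--(iii) fails for every iteration of the scheme below.

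\textbf{Step 1 (first bubble).} Pick $y_i^{(1)}\in\bar B_2$ maximizing $\mathrm{dist}(x,\partial B_2)^{2\sigma/(p_i-1)}u_i(x)$; set $m_i=u_i(y_i^{(1)})$, $\lambda_i=m_i^{(p_i-1)/2\sigma}$, and rescale $v_i(x)=m_i^{-1}u_i(y_i^{(1)}+\lambda_i^{-1}x)$ together with its extension. Because $M_i\to\infty$, the rescaled problem is posed on balls $B_{\lambda_i R_i}$ with $R_i\to\infty$, with rescaled potential $\tilde a_i(x)=\lambda_i^{-2\sigma}a_i(y_i^{(1)}+\lambda_i^{-1}x)\to 0$ in $C^2_{\mathrm{loc}}$. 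Using Harnack (Proposition \ref{prop:harnack}) and Schauder estimates from \cite{CSi,JLX} for the extension, $v_i$ converges in $C^2_{\mathrm{loc}}(\R^n)$ to a positive solution of $(-\Delta)^\sigma v=v^{\bar p}$ on $\R^n$ with $\bar p=\lim p_i\in(1,\frac{n+2\sigma}{n-2\sigma}]$. The Liouville theorem in the subcritical range and the classification in the critical one (as recalled in \cite{JLX}) force $\bar p=\frac{n+2\sigma}{n-2\sigma}$ and $v_i\to (1+|\cdot|^2)^{(2\sigma-n)/2}$, up to the translation that is killed by nondegeneracy at a genuine local maximum. In particular $p_i\ge \frac{n+2\sigma}{n-2\sigma}-\varepsilon$ for large $i$, and (i) holds at $y_i^{(1)}$ for any prescribed $R$ and $\varepsilon$.

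\textbf{Step 2 (iteration).} Given disjoint bubble centers $S_i^{(k)}=\{y_i^{(1)},\dots,y_i^{(k)}\}$ satisfying (i) and (ii), set
\[
f_i^{(k)}(x):=\min\bigl\{\mathrm{dist}(x,\partial B_2),\min_{y\in S_i^{(k)}}|x-y|\bigr\}.
\]
If $\max_{\bar B_2} f_i^{(k)}(x)^{2\sigma/(p_i-1)}u_i(x)\le C_1$ for a suitable constant $C_1=C_1(n,\sigma,\varepsilon,R)$, stop; (iii) then follows directly with $C_2=C_1$. Otherwise let $z_i$ be a maximizer. A priori $z_i$ may sit too close to an old center, but feeding $\{S_i^{(k)}\cup\{z_i\}\}$ into Lemma \ref{lem:select} with the separation function $f_i^{(k)}$ and a large $R_i\to\infty$ produces a replacement $\tilde z_i$ with $f_i^{(k)}(\tilde z_i)$ comparable to that of $z_i$ and with no further bubbles of comparable height in $B_{R_i f_i^{(k)}(\tilde z_i)}(\tilde z_i)$. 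Repeating Step 1 around $\tilde z_i$, the new rescaling yields a standard bubble and hence a local maximum $y_i^{(k+1)}$ satisfying (i); property (ii) with constant $R$ at the new center follows because otherwise one of the old bubbles would show up inside $B_{R}$ of the new rescaling, contradicting the uniqueness of the standard bubble profile around the new center.

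\textbf{Step 3 (finite termination).} Termination after finitely many steps, with bound independent of $i$, comes from the fact that each bubble contributes a definite amount to $\int_{B_{5/2}}u_i^{p_i+1}$ on the disjoint balls of (ii), while testing the equation against a smooth cutoff and invoking $\|a_i\|_{C^2(B_2)}\le A_0$ together with (i)--(ii) bounds $\int_{B_{5/2}}u_i^{p_i+1}$ above by a constant times the bubble count plus a universal constant. When the iteration stops, (iii) is immediate by construction. The main obstacle I expect is Step 2: guaranteeing that the replacement produced by Lemma \ref{lem:select} both preserves property (ii) with the \emph{same} constant $R$ when enlarging $S_i^{(k)}$, and still leaves $\tilde z_i$ in a position where the rescaled equation has vanishing potential $\tilde a_i$ so that the Liouville step applies. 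This is the technical core of the scheme; the smallness of $\tilde a_i$ is automatic because $\lambda_i\to\infty$, while the right choice of the auxiliary radius $R_i$ in Lemma \ref{lem:select} (growing slowly compared to $\lambda_i$) is what makes the two constraints compatible and closes the argument.
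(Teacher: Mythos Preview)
The paper omits the proof of this proposition, saying only that it is standard and follows from the blow-up argument of Proposition \ref{prop:blowupbubble} together with the Liouville theorem in \cite{JLX}. Your proposal follows exactly this standard Schoen-type scheme, so the overall strategy is the intended one. Two points in your execution need correction.

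First, the appeal to Lemma \ref{lem:select} in Step 2 is misplaced. That lemma operates on a given finite set $S_i$ with its separation function $f_i$ and selects, inside a cluster of already-known blow-up points, one whose neighbours are not much closer; the paper uses it only in Proposition \ref{prop:energy}. In the iteration of Proposition \ref{prop:bubledec} one simply takes $y^{(k+1)}$ to be a maximizer of $f_k(x)^{2\sigma/(p-1)}u(x)$ over $\bar B_2$; the bubble profile (i) already in force at the points of $S^{(k)}$ shows that this maximizer cannot lie within $Ru(y^{(j)})^{(1-p)/2\sigma}$ of any $y^{(j)}\in S^{(k)}$, so the rescaling around the new point sees a genuinely local problem and the Liouville step applies with no relocation.

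Second, your Step 3 has a genuine gap. The claimed inequality ``testing the equation against a cutoff bounds $\int_{B_{5/2}}u_i^{p_i+1}$ by $C\cdot(\text{bubble count})+C'$'' is not available: testing produces $\int t^{1-2\sigma}|\nabla U_i|^2$ on the right-hand side, and there is no a priori uniform control on this quantity. More importantly, the proposition only asserts that $S$ is finite for a \emph{given} $u$; a bound on $|S|$ that is uniform in $u$ is not claimed here (and indeed fails in general --- this is precisely what Proposition \ref{prop:energy} supplies under extra hypotheses on $a$ and $\Delta a$). For a fixed $u$ termination is immediate: either observe that $u$ is bounded on $\bar B_2$, so the disjoint balls of (ii) have radii $\ge R(\max_{\bar B_2}u)^{(1-p)/2\sigma}>0$ and only finitely many fit in $B_2$; or note that each bubble contributes at least $c(n,\sigma)>0$ to $\int_{B_2}u^{p+1}<\infty$. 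Keep the sequence/contradiction argument only inside Steps 1--2 to get the universal $C_1$ (exactly as in Proposition \ref{prop:blowupbubble}); the termination step is per-$u$ and should be argued directly.
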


The proof is standard by now, which follows from the blow-up argument as the proof of Proposition \ref{prop:blowupbubble} and Liouville theorem in Jin-Li-Xiong \cite{JLX}. We omit it here.

\begin{prop}\label{prop:energy}  Let  $u\in C^2(B_3)\cap \mathcal{L}_\sigma(\R^n)$ be a solution of \eqref{eq:mode} with $0\le a\in C^4(B_3)$. Suppose that  $\Delta a\ge 0 $  on $\{x:a(x)<d\}\cap B_2$ for some constant $d>0$, and further that  $\Delta a>\gamma>0 $  on $\{x:a(x)<d\}\cap B_2$ for some constant $\gamma$ if $ n\ge 6\sigma+2$.
Then for any $\va>0$ and $R>1$, once $\max_{\bar B_2}dist(x,\pa B_2)^{\frac{n-2\sigma}{2}}u(x)\ge C_1$ with the constant $C_1$ given by Proposition \ref{prop:bubledec} there must be true
\[
|y_1-y_2|\ge \delta^*>0 \quad \mbox{for every }y_1,y_2 \in S \cap B_{3/2},
\]
where $S$ associated to $u$ is also given by  Proposition \ref{prop:bubledec}, and the constant $\delta^*$ depends only on $n,\sigma,d,\gamma, \va, R$ and $\|a\|_{C^4(B_3)}$.
\end{prop}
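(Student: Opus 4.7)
The plan is to argue by contradiction, reducing to the isolated-simple blow-up framework of Sections~\ref{sect:analysis}--\ref{sec:6} via the localizing lemma. Suppose the proposition fails: then we can find exponents $p_i \in (1,(n+2\sigma)/(n-2\sigma)]$, potentials $a_i$ satisfying the stated hypotheses, and solutions $u_i$ of \eqref{eq:mode} with bubble sets $S_i$ (from Proposition~\ref{prop:bubledec}), such that there exist distinct $y_{1,i},y_{2,i} \in S_i \cap B_{3/2}$ with $|y_{1,i}-y_{2,i}| \to 0$. Passing to a subsequence, $y_{1,i},y_{2,i} \to \bar y$. Setting $f_i(z):= \min_{w \in S_i\setminus\{z\}}|z-w|$, we have $f_i(y_{1,i})\to 0$, so picking $R_i\to\infty$ slowly enough that $R_if_i(y_{1,i})\to 0$, Lemma~\ref{lem:select} supplies $z_i \in S_i$ with $z_i \to \bar y$, $\mu_i:=f_i(z_i)\to 0$, and the property that any two distinct points of $S_i \cap B_{R_if_i(z_i)}(z_i)$ are at distance $\ge \mu_i/2$.

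Next I rescale:
\[
\tilde u_i(y) := \mu_i^{2\sigma/(p_i-1)}\, u_i(z_i+\mu_iy),\qquad \tilde a_i(y) := \mu_i^{2\sigma} a_i(z_i+\mu_iy).
\]
Then $\tilde u_i$ solves an equation of the same form with potential $\tilde a_i$; the rescaled set $\tilde S_i = \mu_i^{-1}(S_i-z_i)$ contains $0$, its points in $B_{R_i/2}(0)$ are mutually separated by at least $\tfrac12$, and the last assertion of Proposition~\ref{prop:bubledec} gives $\tilde u_i(y) \le C\,\mathrm{dist}(y,\tilde S_i)^{-2\sigma/(p_i-1)}$. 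In particular $0$ is an isolated blow-up point of $\{\tilde u_i\}$ with $\bar r = 1/4$, and $\tilde m_i := \tilde u_i(0) \to \infty$: indeed, the bubble profile in Proposition~\ref{prop:bubledec}(i) forces any other bubble neighbor of $z_i$ to lie outside $B_{cR u_i(z_i)^{-(p_i-1)/2\sigma}}(z_i)$, so that $\mu_i \ge cR\, u_i(z_i)^{-(p_i-1)/2\sigma}$ and thus $\tilde m_i = \mu_i^{2\sigma/(p_i-1)}u_i(z_i) \ge (cR)^{(n-2\sigma)/2+o(1)}\to\infty$.

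I then verify the hypotheses of Proposition~\ref{prop:is-iss} for $\tilde u_i$ at $0$. The scaling gives $\tilde a_i(0) = \mu_i^{2\sigma} a_i(z_i) \ge 0$, $\Delta \tilde a_i(0) = \mu_i^{2\sigma+2}\Delta a_i(z_i)\ge 0$ (using $a_i \ge 0$ and $\Delta a_i \ge 0$ on $\{a_i < d\}$), $\|\tilde a_i\|_{L^\infty(B_1)} \le C\mu_i^{2\sigma}$, and $\|\tilde a_i\|_{B_1} \le C\mu_i^{4\sigma+2}$. For $4\sigma+2 \le n < 6\sigma+2$, hypothesis~(i) of Proposition~\ref{prop:is-iss} ($\tilde\beta_i\ge 0$) is immediate. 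For $n \ge 6\sigma+2$ I split on $a_i(z_i) \ge d$ (so $\tilde a_i(0)\ge d\mu_i^{2\sigma}$ contributes $\sigma d\mu_i^{2\sigma}\tilde m_i^{2(n-4\sigma)/(n-2\sigma)}$ to $\tilde\beta_i$) versus $a_i(z_i)<d$ (so $\Delta a_i(z_i)>\gamma$ contributes $\tfrac{(\sigma+1)\gamma}{2n}\mu_i^{2\sigma+2}\tilde m_i^{2(n-4\sigma-2)/(n-2\sigma)}$). In either case the condition reduces algebraically to $\tilde m_i \gg \mu_i^{(n-2\sigma)/2}$, which holds because $\tilde m_i \to \infty$ and $\mu_i\to 0$. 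Proposition~\ref{prop:is-iss} therefore yields, after passing to a subsequence, that $0$ is an isolated simple blow-up point of $\{\tilde u_i\}$.

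The final step is the Pohozaev contradiction. From Proposition~\ref{prop:sign} applied to $\tilde u_i$ at $0$, for every fixed small $r>0$,
\[
\liminf_{i\to\infty}\tilde m_i^2 P_\sigma((0,0),r,\tilde U_i) \ge 0,
\]
because the error $C_0 r^{-n}\tilde m_i^{-4\sigma/(n-2\sigma)}\to 0$, the error $C_0\|\tilde a_i\|_{L^\infty}r^{4\sigma-n}\le C\mu_i^{2\sigma}r^{4\sigma-n}\to 0$, and the $\tilde\beta_i$-dependent terms are nonnegative by the preceding step. On the other hand, by Proposition~\ref{prop:up-1}, Proposition~\ref{prop:bocher} and Lemma~\ref{lem:bocher1} (the non-removability of the singularity forces $A>0$ via Lemma~\ref{lemm:upasmp}), $\tilde m_i\tilde U_i$ converges in $C^2_{\mathrm{loc}}(\overline{\R^{n+1}_+}\setminus\{0\})$ to $G(Y) = A|Y|^{2\sigma-n}+H(Y)$ with $A>0$ and $H$ regular at $0$, so
\[
\lim_{i\to\infty}\tilde m_i^2 P_\sigma((0,0),r,\tilde U_i) = P_\sigma((0,0),r,G) = -\tfrac{(n-2\sigma)^2}{2}A^2 r^{2\sigma-n}\int_{\partial''\mathcal B_1^+} s^{1-2\sigma}\,dS + O(r^{4\sigma-n}),
\]
which is strictly negative for all sufficiently small $r$, a contradiction. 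The hardest step will be Step~3: matching the rescaling factors to the growth of $\tilde m_i$ in the borderline case $n\ge 6\sigma+2$, where the quantitative $\tilde\beta_i$-hypotheses of Proposition~\ref{prop:is-iss} must be checked carefully, and the dichotomy $a_i(z_i)\ge d$ versus $\Delta a_i(z_i)>\gamma$ is essential to absorb the $\|\tilde a_i\|_{B_1}\tilde m_i^{2(n-6\sigma-2)/(n-2\sigma)}$ term.
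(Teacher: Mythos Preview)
Your setup through the application of Lemma~\ref{lem:select} and the verification of the hypotheses of Proposition~\ref{prop:is-iss} for the rescaled sequence is essentially the paper's argument. There is, however, a real gap in the final Pohozaev step, and it stems from a single omission: you never use the \emph{second} blow-up point.

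Concretely, your displayed formula
\[
P_\sigma\big((0,0),r,G\big)=-\tfrac{(n-2\sigma)^2}{2}A^2\,r^{2\sigma-n}\!\int_{\partial''\mathcal B_1^+}\!s^{1-2\sigma}\,dS+O(r^{4\sigma-n})
\]
is incorrect: for the pure singular part $A|Y|^{2\sigma-n}$ one has $P_\sigma(0,r,A|Y|^{2\sigma-n})\equiv 0$ by scale invariance (compute the three pieces in the definition and they cancel). The actual limit is
\[
\lim_{r\to 0}P_\sigma\big((0,0),r,G\big)=-\tfrac{(n-2\sigma)^2}{2}\,A\,H(0)\!\int_{\partial''\mathcal B_1^+}\!t^{1-2\sigma}\,dS,
\]
so the contradiction requires $H(0)>0$, not merely $A>0$. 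This is exactly why the paper's Step~3 insists that $\Phi_i(0)\Phi_i$ tends to a Green function \emph{with at least two poles}: the point $x_i:=\mu_i^{-1}(z_{i,2}-z_i)\to\bar x$ with $|\bar x|=1$ is a second isolated (then isolated simple) blow-up point, and its contribution forces $H(0)\ge A'|\bar x|^{2\sigma-n}>0$. Without tracking $x_i$, nothing prevents $H\equiv 0$ and the inequality $0\ge 0$ is vacuous.

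The same omission undermines your justification of $\tilde m_i\to\infty$: the bound $\tilde m_i\ge (cR)^{(n-2\sigma)/2+o(1)}$ uses the \emph{fixed} parameter $R$ from Proposition~\ref{prop:bubledec}, so it only gives a uniform lower bound, not divergence. The standard way to get $\tilde m_i\to\infty$ is again via the second point: if $\tilde m_i$ stayed bounded, $\tilde u_i$ would converge (after passing to a subsequence) to an entire solution of $(-\Delta)^\sigma\Phi=\Phi^{(n+2\sigma)/(n-2\sigma)}$, which by the Liouville classification has a unique critical point, contradicting the presence of two local maxima at $0$ and $\bar x$. In short, add Step~1 of the paper's proof (both $0$ and $x_i\to\bar x$ are isolated blow-up points), upgrade both to isolated simple via Proposition~\ref{prop:is-iss}, and then the two-pole limit gives the strict negativity you need.
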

\begin{proof} The idea is  similar to that of Proposition 5.2 of \cite{JLX} on the unit sphere, but Lemma \ref{lem:select} have to be used since our equation is defined in a bounded domain with boundary. Suppose the contrary, for some $\va, R$ and $d>0$, there exist sequence $\{p_i\}$ and nonnegative potentials  $ a_i\to {\color{red}a}$ in $ C^4(B_3)$ with $\|a_i\|_{C^4(B_3)}\le A_0 $, satisfying the assumptions for $a$, and   a sequence of corresponding solutions $\{u_i\}_{i=1}^\infty$ such that
\[
\lim_{i\to \infty} \min_{j\neq l} |z_{i,j}-z_{i,l}|=0,
\]
where $z_{i,j},z_{i,l}\in S_i\cap B_{3/2}$ associated to $u_i$ defined in Proposition \ref{prop:bubledec}.

 Upon passing to a subsequence, we assume $
z_{i,j},z_{i,l}\to \bar z\in \bar B_{3/2}.$ Define $f_i(z):S_i\to (0,\infty)$ by $f_i(z)=\min_{y\in S_i\setminus \{ z\}}|z-y|$. Let $R_i\to \infty$ with $R_if_i(z_{i,j})\to 0$ as $i\to \infty$.
By Lemma \ref{lem:select}, one can find, say,  $z_{i,1}\in S_i\cap B_{2R_i f_{i}(z_{i,j})}(z_{i,j})$ satisfying
\[
f_i(z_{i,1})\le (2R_i+1) f_{i}(z_{i,j}) \quad \mbox{and} \quad
\min_{z\in  S_i\cap B_{R_i f_i(z_{i,1})} (z_{i,1})} f_i(z) \ge \frac12 f_i(z_{i,1}).
\]
Let $|z_{i,2}-z_{i,1}|=f(z_{i,1})$. Let $U_i$ be the extension of $u_i$ and
\[
\Phi_i(X)=f_i(z_{i,1})^{2\sigma/(p_i-1)}U_i(f_i(z_{i,1}) X+Z_{i,1}) \quad \mbox{with }Z_{i,1}=(z_{i,1},0).
\]
The rest of the proof is divided into three steps:
\begin{enumerate}
\item Prove that $0$ and $x_i:=f_i(z_{i,1})^{-1}(z_{i,2}-z_{i,1})\to \bar x$ with $|\bar x|=1$ are two isolated blow up points of $\{\Phi_i(x,0)\}$.

\item By Proposition \ref{prop:is-iss}, after passing to a subsequence $0$ and $x_i\to \bar x$ have to be isolated simple blow up points of $\{\Phi_i(x,0)\}$.

\item Since $\Phi_i(0)\Phi_i(X)$ tends to a Green function with at least two poles,
we can drive a contradiction by Pohozaev identity.
\end{enumerate}

For step 1 and 3, see the proof of Proposition 5.2 of \cite{JLX}. For step 2, we let
\[
\tilde a_i(x):= f_i(z_{i,1})^{2\sigma} a_i(f_i(z_{i,1}) x+z_{i,1})
\] and verify assumptions in Proposition \ref{prop:is-iss}. We only show it if $n\ge 6\sigma+2$.  By the assumption of $a_i$,  we have
\begin{align*}
&\frac{\sigma \tilde  a_i(0)\Phi_i(0)^{\frac{2(n-4\sigma)}{n-2\sigma}} +\frac{\sigma+1}{2n}\Delta \tilde a_i(0)\Phi_i(0)^{\frac{2(n-4\sigma-2)}{n-2\sigma}}}{\|\tilde a_i\|_{B_{1/2}}}\\&
\ge \Phi_i(0)^{\frac{2(n-4\sigma-2)}{n-2\sigma}}\|a_i\|_{B_2}^{-1}\big\{\sigma a_i(z_{i,1})\Phi_i(0)^{\frac{4}{n-2\sigma}}f_i(z_{i,1})^{-2} +\frac{\sigma+1}{2n} \Delta a_i(z_{i,1})\Big\}\\&
\ge \Phi_i(0)^{\frac{2(n-4\sigma-2)}{n-2\sigma}}\|a_i\|_{B_2}^{-1} \frac{\sigma+1}{2n} \cdot \gamma \quad \mbox{for large }i.
\end{align*} Since
\[
\frac{ \Phi_i(0)^{\frac{2(n-4\sigma-2)}{n-2\sigma}}\|a_i\|_{B_2}^{-1} \frac{\sigma+1}{2n} \cdot \gamma}{\ln \Phi_i(0)}\to \infty \quad \mbox{if }n=6\sigma+2
\]
and
\[
\frac{ \Phi_i(0)^{\frac{2(n-4\sigma-2)}{n-2\sigma}}\|a_i\|_{B_2}^{-1} \frac{\sigma+1}{2n} \cdot \gamma}{\Phi_i(0)^{\frac{2(n-6\sigma-2)}{n-2\sigma}}}\to \infty \quad \mbox{if }n>6\sigma+2,
\]
by Proposition \ref{prop:is-iss} $0$ is an isolated simple blow up point of $\{\Phi_i(\cdot,0)\}$. Similarly, one can show $x_i\to \bar x$ is an isolated simple blow up point of $\{\Phi_i(\cdot,0)\}$.

 Therefore, we complete the proof of  Proposition \ref{prop:energy}.

\end{proof}

\begin{proof}[Proof of Theorem \ref{thm:2}] We first prove that $\|u\|_{L^\infty(B_{5/4})}\le C$. Suppose the contrary that there exists a sequence of solutions $u_i$ of \eqref{eq:main1} satisfying $\|u_i\|_{L^\infty(B_{5/4})}\to \infty$ as $i\to \infty$. For any fixed $\va>0$ sufficiently small and $R>>1$, by Proposition \ref{prop:energy} the set  $S_i$ associated to $u_i$ defined by Proposition \ref{prop:bubledec} only consists of finite many points in $B_{3/2}$ with a uniform positive lower bound of distances between each two points, if $S_i\cap B_{3/2}$ has points more than $1$. By the contradiction assumption $\|u_i\|_{L^\infty(B_{5/4})}\to \infty$ and Proposition \ref{prop:bubledec}, $S_i\cap B_{11/8}$ is not empty and  has only isolated blow up points of $\{u_i\}$ after passing to a subsequence. By Proposition \ref{prop:is-iss}, these isolated blow up points have to be isolated simple blow up points. Suppose that $y_i\to \bar y\in \bar B_{11/8}$ is an isolated simple blow up point of $\{u_i\}$. Let $U_i$ be the extensions of $u_i$ and $Y_i=(y_i,0)$.  By Proposition \ref{prop:up-1}, we have
\[
|U_i(Y_i)^2 P_\sigma(Y_i, r, U_i)| \le C(r).
\]
On the other hand, by the assumption of $a$ and Proposition \ref{prop:sign} we have \[
\liminf_{i\to \infty} U_i(Y_i)^2 P_\sigma(Y_i, r, U_i)=\infty \quad \mbox{for some small }r>0
 \]if $n\ge 4\sigma$. Hence, we obtain a contraction and thus $\|u\|_{L^\infty(B_{5/4})}\le C$. The theorem then follows from interior estimates of solutions of linear equations in \cite{JLX}.
\end{proof}

\begin{proof}[Proof of Theorem \ref{thm:3}]  For any fixed $\va>0$ sufficiently small and $R>>1$ let $S_i$ be the set associated to $u_i$ defined by Proposition \ref{prop:bubledec}.

If $4\sigma+2\le n<6\sigma+2$, by Proposition \ref{prop:energy} the set  $S_i$ associated to $u_i$ defined by Proposition \ref{prop:bubledec} only consists of finite many points in $B_{3/2}$. Since $u_i(x_i)\to \infty$ and $x_i\to \bar x $, by item (iii) of  Proposition \ref{prop:bubledec}, after passing to subsequence, there exists $S_i\ni x_i'\to \bar x$ is an isolated blow up point of $\{u_i\}$. By Proposition \ref{prop:is-iss}, it has to be an isolated simple blow up point.  Let $U_i$ be the extensions of $u_i$ and $X_i'=(x_i',0)$.  By Proposition \ref{prop:up-1}, we have
\[
|U_i(X_i')^2 P_\sigma(X_i', r, U_i)| \le C(r).
\]
By Proposition \ref{prop:sign}, we establish the theorem for $4\sigma+2\le n<6\sigma+2$.

If $n\ge 6\sigma+2$, suppose the contrary that, for some subsequence which we still denote as $i$,
\be \label{eq:contr-3}
\begin{split}
\sigma a_i(x_i) u_{i}(x_i)^{\frac{4}{n-2\sigma}}&+\frac{\sigma+1}{2n} \Delta a_i(x_i)\\ & \ge \frac{1}{|o(1)|} \begin{cases} u_i(x_i)^{\frac{4\sigma}{n-2\sigma}} \ln u_i(x_i)^{-1}& \quad \mbox{for }n=6\sigma+2,\\
u_i(x_i)^{\frac{4\sigma}{n-2\sigma}} & \quad \mbox{for }n>6\sigma+2.
\end{cases}
\end{split}
\ee
Let $\mu_i=dist\{x_i,S_i\setminus \{x_i\}\}$ and
\[
\Phi_i(X)=\mu_i^{\frac{n-2\sigma}{2}} U_i(\mu_i X+X_i),
\]
where $U_i$ is the extension of $u_i$ and $X_i=(x_i,0)$.  If $x_i\notin S_i$, we have $u_i(x_i)\le C \mu_i^{-\frac{n-2\sigma}{2}}$. Hence, $\Phi_i(0)\le C<\infty$ and $\mu_i\to 0$. Since  $\max\limits_{B_{\bar d}(x_i)}u_i(x)\le \bar b u_i(x_i)$, $\Phi_i(x,0)\le C\bar b$ for all $|x|\le \bar d/\mu_i$. By  the argument of proof of Proposition \ref{prop:blowupbubble}, for some $x_0\in \R^n$ and $\lda>0$,
 \[
\Phi_i(x,0)\to (\frac{\lda}{1+\bar c \lda^2 |x-x_0|^2})^{\frac{n-2\sigma}{2}} \quad \mbox{in }C^2_{loc}(\R^n).
\] Note that the limiting function has only one critical point. Suppose $z_i\in S_i$ satisfying $|z_{i}-x_i|=\mu_i$.  Since $x_i$ and $z_i$ both are local maximum points of $\{u_i\}$, $\nabla \Phi_i(0)=0$ and, after passing to subsequence,
\[
\frac{z_i-x_i}{\mu_i}\to \bar x \mbox{ with }|\bar x|=1, \quad 0= \nabla_x \Phi_i(\frac{z_i-x_i}{\mu_i},0).
\]
We obtain a contradiction. Hence, $x_i\in S_i$. It follows that $0$ is an isolated blow up point of $\{\Phi_i(x,0)\}$. By Remark \ref{rem:confirm} and contradiction assumption \eqref{eq:contr-3}, $0$ is an isolated simple blow up point. Making use of Proposition \ref{prop:up-1} and Proposition \ref{prop:sign} we obtain contradiction again.

Therefore, we complete the proof.

\end{proof}

\small

\bigskip

\noindent  School of Mathematical Sciences, Beijing Normal University\\
Beijing 100875, China\\[1mm]
 Email: \textsf{miaomiaoniu@mail.bnu.edu.cn} (M.N);

 ~~~~~\textsf{201321130137@mail.bnu.edu.cn} (Z.P);

~~~~~\textsf{jx@bnu.edu.cn} (J.X)

\end{document}